
\documentclass[12pt,twoside,reqno]{amsart}
\usepackage[latin1]{inputenc}
\usepackage[english]{babel}
\usepackage{amsmath, amsthm, amsfonts,amssymb}
\usepackage{color}
\usepackage{hyperref}

\usepackage[left=2.5cm,right=2.5cm,top=2cm,bottom=2cm]{geometry}

\usepackage{anysize}
\marginsize{10cm}{10cm}{2cm}{2cm}
\newtheorem{thm}{Theorem}[section]
\newtheorem{cor}[thm]{Corollary}
\newtheorem{lem}[thm]{Lemma}
\newtheorem{prop}[thm]{Proposition}

\theoremstyle{plain}

\theoremstyle{definition}

\newtheorem{defn}[thm]{Definition}
\newtheorem{rem}[thm]{Remark}
\newtheorem{ex}[thm]{Example}

\newcommand\blfootnote[1]{%
	\begingroup
	\renewcommand\thefootnote{}\footnote{#1}%
	\addtocounter{footnote}{-1}%
	\endgroup
}

\setlength{\oddsidemargin}{-2mm}
\setlength{\evensidemargin}{-2mm}
\setlength{\textwidth}{170mm}
\setlength{\textheight}{230mm}
\setlength{\topmargin}{-5mm}

\newcommand{\N}{\mathbb{N}}

\newcommand{\X}{\mathbb{X}}

\newcommand{\supp}{\mathrm{supp \,}}
\newcommand{\sgn}{\mathrm{sgn \,}}

\def\ba{\begin{eqnarray*}}
	\def\ea{\end{eqnarray*}}

\def\bee{\begin{equation}}

\def\ene{\end{equation}}





\title{The weighted Property (A) and the greedy algorithm}

\author{P. M. Bern\'a}
\address{Pablo M. Bern\'a
	\\
	Departmento de Matem\'aticas
	\\
	Universidad Aut\'onoma de Madrid
	\\
	28049 Madrid, Spain} \email{pablo.berna@uam.es}

\author{S. J. Dilworth}
\address{Stephen J. Dilworth
	\\
	Department of Mathematics
	\\
	University of South Carolina
	\\
	Columbia SC 29208, USA} \email{dilworth@math.sc.edu}

\author{D. Kutzarova}
\address{Denka Kutzarova
	\\
	Department of Mathematics
	\\
	University of Illinois Urbana-Champaign
	\\
	Urbana, IL 61801, USA; and Institute of Mathematics and Informatics, Bulgarian Academy of Sciences} \email{denka@math.uiuc.edu}

\author{T. Oikhberg}
\address{Timur Oikhberg
	\\
	Department of Mathematics
	\\
	University of Illinois Urbana-Champaign
	\\
	Urbana, IL 61801, USA} \email{oikhberg@illinois.edu}

\author{B. Wallis}
\address{Ben Wallis
	\\
	Department of Mathematical Sciences
	\\
	Northern Illinois University
	\\
	DeKalb, IL 60115-2888, USA} \email{benwallis@live.com}

\date{}
\markboth{\sc The weighted Property (A) and the greedy algorithm}
{\sc }

\begin{document}
	\maketitle

	\begin{abstract}
		We investigate various aspects of the ``weighted'' greedy algorithm with respect to a Schauder basis.
		For a weight $w$, we describe $w$-greedy, $w$-almost-greedy, and $w$-partially-greedy bases,
		and examine some properties of $w$-semi-greedy bases. To achieve these goals, we introduce
		and study the $w$-Property (A).
	\end{abstract}
	\blfootnote{\hspace{-0.031\textwidth} 2000 Mathematics Subject Classification. 46B15, 41A65.\newline
		\textit{Key words and phrases}: thresholding greedy algorithm, unconditional basis, Property (A), $w$-greedy bases.\newline
		The first author was supported by a PhD fellowship FPI-UAM and the grants MTM-2016-76566-P (MINECO, Spain) and 19368/PI/14 (\emph{Fundaci\'on S\'eneca}, Regi\'on de Murcia, Spain). 
		The second author was supported by the National Science Foundation under Grant Number DMS--1361461. The second and third authors were supported by the Workshop in Analysis and Probability at Texas A\&M University in 2017. }
	
	

\section{Introduction}\label{s:intro}

		In this paper, we investigate the operation of the ``weighted'' greedy algorithm, and
		its efficiency. Throughout, $(\X,\Vert \cdot \Vert)$ is a real Banach space with
		a semi-normalized Schauder basis $\mathcal{B}=(e_n)_{n=1}^{\infty}$, with biorthogonal functionals
		$(e_n^{*})_{n=1}^{\infty}$; that is,
		\begin{enumerate}
			\item[A1)] $0<c_1:= \inf_n \min \lbrace\Vert e_n\Vert,\Vert e_n^*\Vert\rbrace\leq \sup_n \max \lbrace\Vert e_n\Vert,\Vert e_n^*\Vert\rbrace=:c_2<\infty$,
			\item[A2)] $e_i^*(e_j)= 1$ if $i=j$ and $e_i^*(e_j)=0$ for $i\neq j$,
			\item[A3)] $\mathbb{X}=\overline{span[e_i : i\in\mathbb N]}$,
			\item[A4)] $\|S_m\| \leq K$ for every $m$, where $(S_m)_m$ are partial sum operators 
-- that is, $S_m(\sum_{i=1}^\infty a_i e_i)= \sum_{i=1}^m a_ie_i$. We denote by $K_b$ 
the least value of $K$ for which the preceding inequality holds, and call it the \textbf{basis constant}.
		\end{enumerate}

		We will refer to $\mathcal B$ as a \textbf{basis}. Of course, for every $x\in\mathbb X$, there exists a unique expansion $x=\sum_j e_j^*(x)e_j$. As usual, $\supp(x)=\{i\in \N: e_i^*(x)\ne 0\}$, $|A|$ denotes the cardinality of a set $A$ and $$\mathbb{N}^m = \lbrace A\subset \mathbb N : \vert A\vert= m\rbrace,\;\; \mathbb N^{<\infty} = \bigcup_{m=0}^\infty\mathbb N^{m}.$$
		Further notations will be often used: 
		if $a$ and $b$ are functions of some variable, $a\lesssim b$ means that there exists a constant $c>0$
		such that $a\leq c\cdot b$; if $A$ and $B$ are subsets of $\N$, $A<B$ means that
		$\max_{j\in A}j < \min_{j\in B} j$, $P_A$ is the projection operator, i.e, if $A$ is a finite set, $P_A(\sum_j a_j e_j) = \sum_{j\in A}a_je_j$ and
$P_A^c=I-P_A$ is the complementary projection, $\mathbf{1}_{\varepsilon A}=\sum_{n\in A}\varepsilon_n e_n$ for
$\varepsilon_n \in \lbrace \pm 1\rbrace$ and if $\varepsilon_n \equiv 1$, we write $\mathbf{1}_A$.
		
		In 1999, S. V. Konyagin and V. N. Temlyakov introduced in \cite{KT} the
		\textbf{Thresholding Greedy Algorithm (TGA)}: for in $x \in \mathbb X$
		we produce the sequence of \textbf{greedy approximands} 
		$$\mathcal G_m(x) = \sum_{n=1}^me_{\pi(n)}^{*}(x)e_{\pi(n)},$$ where $\pi$ is a \textbf{greedy ordering},
		that is, $\pi: \lbrace 1, 2,..., \vert \supp x \vert\rbrace \longrightarrow \supp x$ is a bijection such that
		$\vert e^*_{\pi(i)}(x)\vert \geq \vert e^*_{\pi(j)}(x)\vert$ for $i\leq j$. 
		Alternatively we can write $\mathcal G_m(x) = \sum_{k \in A_m(x)} e_k^*(x) e_k$, where $A_m(x) = \{ \pi(n) : n \leq m\}$
		is a \textbf{greedy set} of $x$: $\inf_{k \in A_m(x)} \vert e_k^*(x) \vert \geq \sup_{k \notin A_m(x)} \vert e_k^*(x) \vert$.
		
		Also, they defined in \cite{KT} the \textbf{quasi-greedy bases} as those bases such that there exists a positive constant $C$ such that
		\begin{eqnarray}\label{quasi}
		\Vert \mathcal{G}_m(x)\Vert \leq C\Vert x\Vert,\; \forall x\in\mathbb X, \forall m \in \mathbb N.
		\end{eqnarray}
		P. Wojtaszczyk proved in \cite{Woj} that a basis is quasi-greedy if and only if the (TGA) converges -- that is,
		$$\lim_{m\rightarrow \infty}\Vert x- \mathcal G_m(x)\Vert = 0,\; \forall x\in\mathbb X.$$
		Of course, \eqref{quasi} is equivalent to the existence of a constant $C'$ such that
		\begin{eqnarray}\label{quasi2}
		\Vert x-\mathcal{G}_m(x)\Vert \leq C'\Vert x\Vert,\; \forall x\in\mathbb X,\forall m\in \mathbb N.
		\end{eqnarray}
		We denoted by $C_q$ the least constant that satisfies \eqref{quasi2}, it is called the quasi-greedy constant and we say that $\mathcal B$ is $C_q$-quasi-greedy.
		
		On the other hand, the (TGA) is a good candidate to obtain the best m-term approximation with regard to $\mathcal B$. In this sense, S. V. Konyagin and V. N. Temlyakov defined in \cite{KT} the \textbf{greedy bases} as those bases such that there exists a constant $C\geq 1$ such that
		\begin{eqnarray}
		\Vert x-\mathcal{G}_m(x)\Vert \leq C\inf\lbrace \Vert x-\sum_{n\in A}a_ne_n\Vert : A\subset \mathbb N, \vert A\vert = m, a_n\in\mathbb R\rbrace.
		\end{eqnarray}
		Furthermore, they showed that $\mathcal B$ is greedy if and only if $\mathcal B$ is democratic (that is, $\Vert \mathbf{1}_A\Vert \lesssim \Vert \mathbf{1}_B\Vert$, for all $\vert A\vert \leq \vert B\vert$) and unconditional.
		
		Some years later, G. Kerkyacharian, D. Picard and V. N. Temlyakov \cite{GPT} introduced the following extension of the greedy bases: we consider a weight $w=(w_i)_{i=1}^\infty\in (0,\infty)^{\mathbb N}$. If $A\subset \mathbb N$, $w(A)=\sum_{i\in A}w_i$ denote the $w$-measure of $A$. We define the error $\sigma_\delta^w(x)$ as $$\sigma_\delta^w(x,\mathcal B)_{\mathbb X}=\sigma_\delta^w(x):=\inf\lbrace \Vert x-\sum_{n\in A}a_n e_n\Vert : A\in\mathbb N^{<\infty}, w(A)\leq\delta, a_n\in\mathbb R\rbrace.$$
		
		\begin{defn}
			We say that $\mathcal B$ is \textbf{$w$-greedy} if there exists a constant $C\geq 1$ such that
			\begin{eqnarray}\label{wgreedy}
			\Vert x-\mathcal{G}_m(x)\Vert \leq C\sigma_{w(A_m(x))}^w(x),\; \forall x\in\mathbb X, \forall m\in\mathbb N.
			\end{eqnarray}
			We denote by $C_g$ the least constant that satisfies \eqref{wgreedy} and we say that $\mathcal B$ is $C_g$-$w$-greedy.
		\end{defn}
		
		Roughly, the greedy bases are those where the greedy approximation is ``as effective as
		$m$-term approximation can possibly be''.
		
		This generalization was motivated by the work of A. Cohen,
		R. A. DeVore and R. Hochmuth in \cite{CDH}.
		In their recent paper \cite{BB2}, the first author and \'O. Blasco characterize $w$-greedy bases
		using the best $m$-term error in the approximation ``with polynomials of constant coefficients''.
		Moreover,
		\cite{Tem} characterizes $w$-greedy bases in terms of their $w$-democracy and unconditionality. 
		
		\begin{defn}
			We say that $\mathcal B$ is \textbf{$w$-democratic} if there exists a constant $C\geq 1$ such that
			\begin{eqnarray}\label{wdem}
			\Vert \mathbf{1}_A\Vert \leq C\Vert \mathbf{1}_B\Vert,
			\end{eqnarray}
			for any pair of sets $A,B\in\mathbb N^{<\infty}$ with $w(A)\leq w(B)$.
			We denote by $C_d$ the least constant that satisfies \eqref{wdem} and we say that $\mathcal B$ is $C_d$-$w$-democratic.
		\end{defn}
		
		Recall that a basis $\mathcal B$ in $\mathbb X$ is 
		\textbf{unconditional} if any rearrangement of $\sum_{n}e_n^*(x)e_n$
		converges in norm to $x$ for any $x\in\mathbb X$.
		This is equivalent to the uniform boundedness of basis projections:
		\begin{eqnarray}\label{uncon}
		\Vert x-P_A(x)\Vert \leq K\Vert x\Vert, \; \forall x\in\mathbb X, \forall A\subset \mathbb N.
		\end{eqnarray}
		We denote by $K_u$ the least constant that satisfies \eqref{uncon}, it is called the (suppression) unconditional constant,
and we say that $\mathcal B$ is $K_u$-(suppression) unconditional.

		
		Other important $w$-type greedy basis in this context is the $w$-almost-greedy basis.
		
		\begin{defn}
			We say that $\mathcal B$ is $w$-\textbf{almost-greedy} if there exists a constant $C\geq 1$ such that
			\begin{eqnarray}\label{walm}
			\Vert x-\mathcal G_m(x)\Vert \leq C\tilde{\sigma}_{w(A_m(x))}^w,\; \forall x\in\mathbb X, \forall m\in\mathbb N,
			\end{eqnarray}
			where 
			$$\tilde{\sigma}_\delta^w(x,\mathcal B)_{\mathbb X}=\tilde{\sigma}_{\delta}^w (x):= \inf\lbrace \Vert x-P_A(x)\Vert : A\in\mathbb N^{<\infty}, w(A)\leq \delta\rbrace.$$
			We denote by $C_{al}$ the least constant that satisfies \eqref{walm} and we say that $\mathcal B$ is $C_{al}$-$w$-almost-greedy.
		\end{defn}
		
		\begin{rem}
			If $w\equiv 1$, that is, $w(A)=\vert A\vert$, we recover the classical definition of almost-greediness (resp.~greediness, democracy and Property (A)-see the definition below), and we will say that $\mathcal B$ is almost-greedy (resp.~greedy, democratic, has the Property (A)).
		\end{rem}
		In the classical sense, that is, when $w\equiv 1$, S. J. Dilworth, N. J. Kalton, D. Kutzarova and V. N. Temlyakov gave in \cite{DKKT} a characterization of almost-greedy bases in terms of the quasi-greediness and democracy. Recently, S. J. Dilworth, D. Kutzarova, V. N. Temalykov and B. Wallis, in \cite{DKTW}, gave a characterization of $w$-almost-greedy bases in terms of quasi-greedy and $w$-democratic bases. 
		
		It is well known that, even for $w \equiv 1$,
		the $w$-democracy and unconditionality (resp.~quasi-greediness),
		cannot be used to determine whether a given basis is $w$-greedy (resp.~$w$-almost-greedy) with constant 1. For the weight $w \equiv 1$, F. Albiac and P. Wojtaszczyk introduced in \cite{AW}
		the so called Property (A) (defined below) in order to obtain finer estimate for the greedy constant $C_g$
		(and, in particular, to characterize bases with $C_g=1$). 
		The results of \cite{AW} were further generalized in \cite{DKOSS};
		in \cite{AA2}, the Property (A) was used to estimate the almost-greedy constant $C_{al}$.
		
		%


		
		Throughout the paper, we will be using a weighted version of Property (A):
		
		\begin{defn}\label{d:w_prop_A}
			We say that $\mathcal B$ satisfies the $w$-\textbf{Property (A)} if there exists a constant $C\geq 1$ such that
			\begin{eqnarray}\label{wA}
			\Vert x+t\mathbf{1}_{\varepsilon A}\Vert \leq C\Vert x+t\mathbf{1}_{\eta B}\Vert,
			\end{eqnarray}
			for any $x\in\mathbb X$, for any $A,B \in \mathbb N^{<\infty}$such that $w(A)\leq w(B)$, $A\cap B=\emptyset$, $\supp(x)\cap (A\cup B)=\emptyset$, for any $\varepsilon, \eta \in \lbrace \pm 1\rbrace$ and $t \geq \sup_j \vert e_j^*(x)\vert$.
			We denote by $C_a$ the least constant that satisfies \eqref{wA} and we say that $\mathcal B$ has the $C_a$-$w$-Property (A).
		\end{defn}
		
		\begin{rem}
			The definition of $w$-Property (A) was motivated by the ``classical'' Property (A)
			(introduced in \cite{AW}), which states that
			$$
			\Vert x+t\mathbf{1}_{\varepsilon A}\Vert \leq C\Vert x+t\mathbf{1}_{\eta B}\Vert,
			$$
			whenever 
			$|A| = |B|<\infty$, $A\cap B=\emptyset$, $\supp(x)\cap (A\cup B)=\emptyset$,
			$\varepsilon, \eta \in \lbrace \pm 1\rbrace$ and $t \geq \sup_j \vert e_j^*(x)\vert$.
			Proposition \ref{w-equivalent} shows that the classical Property (A) is equivalent to the $w$-Property (A)
			if $0 < \inf_n w_n \leq \sup_n w_n < \infty$.
		\end{rem}

		Another way of estimating the efficiency of greedy approximation is to compare the rate of convergence
		with straightforward Schauder approximation. To this end we consider
		\textbf{$w$-partially-greedy bases}. In \cite{DKKT}, the authors defined the partially-greedy bases as those satisfying $$\Vert x-\mathcal{G}_m(x)\Vert \leq C\Vert x-S_m(x)\Vert,\; \forall x\in\mathbb X,\forall m\in\mathbb N,$$
		for some positive and absolute constant $C$.
		Moreover, they proved that $\mathcal B$ is partially-greedy if and only if $\mathcal B$ is quasi-greedy and conservative (that is, $\Vert \mathbf{1}_A\Vert \lesssim \Vert \mathbf{1}_B\Vert$ for all pair of finite sets $A,B$ such that $A<B$ and $\vert A\vert\leq\vert B\vert$). Here, we present the notion of $w$-partially-greedy bases and we characterize these bases using $w$-conservative bases.

		The paper is structured as follows. In Section \ref{s:characterize} we describe the $w$-greedy and
		$w$-almost-greedy bases in terms of their other properties (such as $w$-Property (A),
		unconditionality, or being quasi-greedy). The main results are Theorems \ref{1wgreedy} and
		\ref{1walmostgreedy}.
		
		In Section \ref{s:prop_A}, we collect basic facts about the $w$-Property (A). 
		In addition, we consider the $w$-semi-greedy bases -- that is, the bases where the Chebyshev
		greedy approximands are optimal. It turns out (Theorem \ref{t:semi_greedy}) such bases necessarily
		possess the $w$-Property (A).
		
Section \ref{s:properties_C_and_D} is devoted to properties (C) and (D), which arise naturally in
the study of quasi-greedy bases. In particular, it is shown that $w$-superdemocracy and
Property (C) imply $w$-Property (A) (Proposition \ref{propCS}).
However, superdemocracy does not imply Property (C) (Example \ref{ex:no_D}).
Further, we show that any $w$-semi-greedy basis has Property (C) if the weight $w$
is equivalent to a constant (Proposition \ref{p:SG_implies_C}).

		In Section \ref{s:partially_greedy}, we compare the efficiency of greedy approximation with that of the canonical
		basis projections. This gives rise to the notion of an $w$-partially-greedy basis;
		such bases are characterized in Theorem \ref{t:partially_greedy}.


		Finally, in Section \ref{s:questions} and Section \ref{appendix}
		we state some open questions related to our results, and prove some basic lemmas used throughout the paper. 
		
		We freely use the standard ``greedy'' terminology.
		The reader can consult e.g. \cite{Tem} for more information.
		
	
	\section{Characterization of $w$-greedy and $w$-almost-greedy bases}\label{s:characterize}
	
	
	In this section we describe the $w$-(almost)-greediness of a basis in terms of its $w$-Property (A)
	and unconditionality (resp.~quasi-greediness). The corresponding results for the constant
	weight $w\equiv 1$ can be found, for instance, in \cite{Tem}.
	
	\begin{thm}\label{1wgreedy}
		Let $\mathcal B$ be a basis of a Banach space $\mathbb X$. 
		\begin{itemize}
			\item[a)] If $\mathcal B$ is $C_g$-$w$-greedy, then the basis is $K_u$-unconditional and has the $C_a$-$w$-Property (A) with constants $K_u\leq C_g$ and $C_a\leq C_g$.
			\item[b)] If $\mathcal B$ is $K_u$-unconditional and has the $C_a$-$w$-Property (A), then the basis is $C_g$-$w$-greedy with $C_g\leq K_uC_a$.
		\end{itemize}
	\end{thm}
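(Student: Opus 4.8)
The plan is to prove both implications by feeding carefully chosen auxiliary vectors into the relevant defining inequalities. For part (a), to get unconditionality, fix $x\in\mathbb{X}$ and a finite set $A$; after replacing $A$ by $A\cap\supp(x)$ we may assume $A\subseteq\supp(x)$, and by density that $x$ is finitely supported. Put $m=|A|$, let $\varepsilon_n=\sgn(e_n^*(x))$ for $n\in A$, and set $x_\lambda=x+\lambda\mathbf{1}_{\varepsilon A}$. For $\lambda$ large $A$ is the unique greedy set of $x_\lambda$ of cardinality $m$, so $x_\lambda-\mathcal{G}_m(x_\lambda)=P_A^c(x)$; since $\lambda\mathbf{1}_{\varepsilon A}$ is supported on a set of $w$-measure $w(A)$, the $C_g$-$w$-greedy inequality gives $\|P_A^c(x)\|=\|x_\lambda-\mathcal{G}_m(x_\lambda)\|\le C_g\,\sigma^w_{w(A)}(x_\lambda)\le C_g\|x_\lambda-\lambda\mathbf{1}_{\varepsilon A}\|=C_g\|x\|$, i.e.\ $K_u\le C_g$. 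For the $w$-Property (A), take $x,A,B,\varepsilon,\eta,t$ as in Definition \ref{d:w_prop_A} and, for $s>t$, set $z=x+t\mathbf{1}_{\varepsilon A}+s\mathbf{1}_{\eta B}$. Because $s>t\ge\sup_j|e_j^*(x)|$, $B$ is the greedy set of $z$ of size $|B|$ and $z-\mathcal{G}_{|B|}(z)=x+t\mathbf{1}_{\varepsilon A}$; since $t\mathbf{1}_{\varepsilon A}$ lives on a set of $w$-measure $w(A)\le w(B)$, the $w$-greedy inequality yields $\|x+t\mathbf{1}_{\varepsilon A}\|\le C_g\,\sigma^w_{w(B)}(z)\le C_g\|z-t\mathbf{1}_{\varepsilon A}\|=C_g\|x+s\mathbf{1}_{\eta B}\|$, and letting $s\downarrow t$ gives $C_a\le C_g$.

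For part (b), fix $x\in\mathbb{X}$, $m\in\mathbb{N}$ (the case $|\supp(x)|<m$ being trivial), a greedy set $A=A_m(x)$, and put $t=\min_{n\in A}|e_n^*(x)|$, so $|e_n^*(x)|\le t$ for every $n\notin A$. Let $z=x-\sum_{n\in D}a_ne_n$ be an arbitrary competitor with $w(D)\le w(A)$, and write $A'=A\setminus D$, $B=D\setminus A$, $u=P_{A\cup D}^c(x)$. One checks the bookkeeping identities $x-\mathcal{G}_m(x)=P_A^c(x)=u+P_B(x)$, together with $w(B)\le w(A')$, $A'\cap B=\emptyset$, $\supp(u)\cap(A'\cup B)=\emptyset$ and $\sup_j|e_j^*(u)|\le t$. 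The estimate then runs in three steps. First, since $|e_n^*(x)|\le t$ on $B$, the vector $u+P_B(x)$ is a convex combination of the vectors $u+t\mathbf{1}_{\zeta B}$, $\zeta\in\{\pm1\}^B$, so $\|u+P_B(x)\|\le\max_\zeta\|u+t\mathbf{1}_{\zeta B}\|$. Second, by the $w$-Property (A) (applicable since $w(B)\le w(A')$), each term is at most $C_a\|u+t\mathbf{1}_{\eta A'}\|$, where $\eta_n=\sgn(e_n^*(x))$. Third, since $|e_n^*(x)|\ge t$ on $A'$, the vector $u+t\mathbf{1}_{\eta A'}$ is a convex combination of the vectors $u+P_{A''}(x)$ with $A''\subseteq A'$, whence $\|u+t\mathbf{1}_{\eta A'}\|\le\max_{A''\subseteq A'}\|u+P_{A''}(x)\|$; and because $\supp(u)\cup A''\subseteq D^c$, the vector $u+P_{A''}(x)$ equals the coordinate projection $P_{\supp(u)\cup A''}(z)$, hence has norm at most $K_u\|z\|$ by unconditionality. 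Chaining these gives $\|x-\mathcal{G}_m(x)\|\le K_uC_a\|z\|$, and taking the infimum over competitors yields $C_g\le K_uC_a$.

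The set-theoretic identities and the verification that $A$ (resp.\ $B$) is the relevant greedy set for large $\lambda$ (resp.\ $s$) are routine. The one genuinely delicate point is that in part (b) the coefficients $e_n^*(x)$ on the sets $A'$ and $B$ are in general \emph{not} equal to the common height $t$ that the $w$-Property (A) requires; the two convexity arguments---rounding the $B$-coefficients up to $t$ and the $A'$-coefficients down to $t$---are exactly what bridge this gap, and crucially they cost nothing, so the final constant stays the sharp $K_uC_a$. A secondary subtlety, needed to avoid an extra factor of $K_u$, is that in the last step one compares $u+P_{A''}(x)$ directly with the \emph{actual} competitor $z$ (with its arbitrary coefficients on $D$), rather than routing through the associated projection competitor $P_D^c(x)$.
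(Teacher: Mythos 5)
Your proof is correct and follows essentially the same route as the paper: part (a) uses the same large-perturbation vectors, and part (b) uses the same decomposition $A\setminus D$, $D\setminus A$, $P_{(A\cup D)^c}(x)$ and comparison with the actual competitor. The only difference is that where the paper invokes Proposition \ref{p:prop_A_projections} and the truncation operator of Lemma \ref{trun}, you inline the two underlying convexity arguments (averaging over signs on $D\setminus A$ and over subsets of $A\setminus D$), which is the same mechanism made explicit.
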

	
	\begin{thm}\label{1walmostgreedy}
		Let $\mathcal B$ be a basis of a Banach space $\mathbb X$. 
		\begin{itemize}
			\item[a)] If $\mathcal B$ is $C_{al}$-$w$-almost-greedy, then the basis is $C_{q}$-quasi-greedy and has the $C_{a}$-$w$-Property (A) with constants $C_q\leq C_{al}$ and $C_a\leq C_{al}$.
			\item[b)] If $\mathcal B$ is $C_q$-quasi-greedy and has the $C_a$-$w$-Property (A), then the basis is $C_{al}$-$w$-almost-greedy with $C_{al}\leq C_qC_a$.
		\end{itemize}
	\end{thm}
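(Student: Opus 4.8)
The plan is to mimic the classical argument characterizing almost-greedy bases (quasi-greedy + democratic), adapting every estimate to the weighted setting and using the $w$-Property (A) in place of $w$-democracy. For part (a), I would first show that $w$-almost-greedy implies quasi-greedy: given $x$ and a greedy set $A_m(x)$, apply \eqref{walm} with the trivial competitor $A=\emptyset$ in $\tilde\sigma^w_{w(A_m(x))}$, i.e. $\tilde\sigma^w_{w(A_m(x))}(x)\le \|x\|$, which yields $\|x-\mathcal G_m(x)\|\le C_{al}\|x\|$; by \eqref{quasi2} this gives $C_q\le C_{al}$. To extract the $w$-Property (A) with constant $C_a\le C_{al}$, take $x,A,B,\varepsilon,\eta,t$ as in Definition \ref{d:w_prop_A}, with $w(A)\le w(B)$, $A\cap B=\emptyset$, $\supp(x)\cap(A\cup B)=\emptyset$, $t\ge\sup_j|e_j^*(x)|$. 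Consider $z=x+t\mathbf 1_{\varepsilon A}+t\mathbf 1_{\eta B}$. Since $|e_k^*(z)|=t$ exactly on $A\cup B$ and the entries of $x$ are $\le t$ in modulus, $A\cup B$ can be made a greedy set for $z$ (break ties so that the coordinates of $A\cup B$ come first), and moreover $B$ itself is a greedy set of $z$ of $w$-measure $w(B)$ after choosing the greedy ordering appropriately. Applying \eqref{walm} to $z$ with a greedy set of $w$-measure $w(B)$ equal to $B$ gives $\|z-P_B(z)\|=\|x+t\mathbf 1_{\varepsilon A}\|\le C_{al}\tilde\sigma^w_{w(B)}(z)\le C_{al}\|z-P_A(z)\|=C_{al}\|x+t\mathbf 1_{\eta B}\|$, which is exactly \eqref{wA} with $C_a\le C_{al}$.

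For part (b), assume $\mathcal B$ is $C_q$-quasi-greedy and has the $C_a$-$w$-Property (A); I want $\|x-\mathcal G_m(x)\|\le C_qC_a\,\tilde\sigma^w_{w(A_m(x))}(x)$. Fix $x$, let $A=A_m(x)$ be the relevant greedy set, and let $B\in\N^{<\infty}$ with $w(B)\le w(A_m(x))$ be an arbitrary competitor in $\tilde\sigma^w$; the goal is $\|x-P_A(x)\|\le C_qC_a\|x-P_B(x)\|$. Write $x-P_A(x)=P_{A^c}(x)$. The standard trick is to compare $x-P_A(x)$ with $x-P_B(x)$ by introducing the common part and using the $w$-Property (A) to swap the "extra" indices of $A\setminus B$ (which have small coordinates, being outside a greedy set) against indices of $B\setminus A$. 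One writes $x-P_B(x)=P_{(A\cup B)^c}(x)+P_{A\setminus B}(x)$ and $x-P_A(x)=P_{(A\cup B)^c}(x)+P_{B\setminus A}(x)$; since $w(A\setminus B)\le w(B\setminus A)$ (as $w(A)\le w(B)$ forces $w(A\setminus B)\le w(B\setminus A)$) and the coordinates of $x$ on $A\setminus B\subset A^c\cup\ldots$ are each dominated in modulus by $\min_{k\in A}|e_k^*(x)|\le \min_{k\in B\setminus A}|e_k^*(x)|$, one can first apply a quasi-greedy estimate to replace $P_{B\setminus A}(x)$ by a multiple of $t\mathbf 1_{\eta(B\setminus A)}$ (with $t$ the threshold), then apply the $w$-Property (A) to bound $\|P_{(A\cup B)^c}(x)+t\mathbf 1_{\eta(B\setminus A)}\|$ by $\|P_{(A\cup B)^c}(x)+t\mathbf 1_{\varepsilon(A\setminus B)}\|$, and finally undo the quasi-greedy step. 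Careful bookkeeping of the two constants produces $C_{al}\le C_qC_a$; since $B$ was arbitrary, passing to the infimum gives \eqref{walm}.

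The main obstacle is the combinatorial/analytic heart of part (b): correctly implementing the "truncation to the threshold level $t$" so that the $w$-Property (A) can be applied, while keeping track of where each factor of $C_q$ enters, since naively one picks up $C_q$ twice and the claimed bound is only $C_qC_a$. The resolution is the usual one (see the $w\equiv 1$ case in \cite{DKKT} or \cite{Tem}): one does not truncate both sides symmetrically but rather works with the inequality $\|P_{(A\cup B)^c}(x)+P_{B\setminus A}(x)\|\le C_q\|P_{(A\cup B)^c}(x)+t\mathbf 1_{B\setminus A}\|$ on one side together with $\|P_{(A\cup B)^c}(x)+t\mathbf 1_{A\setminus B}\|\le\|P_{(A\cup B)^c}(x)+P_{A\setminus B}(x)\|\cdot(\text{something }\le 1\text{ or absorbed})$ on the other — more precisely, since the coordinates on $A\setminus B$ have modulus $\le t$, one uses that $P_{A\setminus B}(x)$ is a greedy projection of $P_{(A\cup B)^c\cup(A\setminus B)}(x+t\mathbf 1_{A\setminus B}-\text{stuff})$, so only a single $C_q$ survives. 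One must also verify at the outset that $w(A)\le w(B)$ implies $w(A\setminus B)\le w(B\setminus A)$ (immediate by subtracting $w(A\cap B)$) and that $\tilde\sigma^w$ and greedy sets interact correctly when there are ties in the coordinate moduli (handled by the freedom in choosing the greedy ordering $\pi$), both of which are routine but need to be stated.
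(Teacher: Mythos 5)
Your overall strategy coincides with the paper's: part (a) is obtained exactly as in the paper (take $B=\emptyset$ for quasi-greediness; build an auxiliary vector $x+t\mathbf 1_{\varepsilon A}+t\mathbf 1_{\eta B}$ for which $B$ is a greedy set — the paper uses $(t+\delta)\mathbf 1_{\eta B}$ and lets $\delta\to 0$ to avoid the tie-breaking convention you invoke), and part (b) uses the same decomposition over $(A\cup B)^c$, $A\setminus B$, $B\setminus A$, the swap of $B\setminus A$ for $A\setminus B$ at the threshold level $t$, and a single truncation/quasi-greedy estimate.

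However, several inequalities in your part (b) are written in the wrong direction, and as stated they would make the key Property (A) application illegal. In your setup $A=A_m(x)$ is the greedy set and $B$ is the competitor, so $w(B)\le w(A)$, hence $w(B\setminus A)\le w(A\setminus B)$ — not ``$w(A\setminus B)\le w(B\setminus A)$ (as $w(A)\le w(B)$ forces\dots)'' as you wrote; the inequality you need for Property (A) to bound $\Vert\cdot+t\mathbf 1_{\eta(B\setminus A)}\Vert$ by $\Vert\cdot+t\mathbf 1_{\varepsilon(A\setminus B)}\Vert$ is precisely $w(B\setminus A)\le w(A\setminus B)$, which (fortunately) is the one that actually holds. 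Likewise, the coordinates dominated by $t=\min_{k\in A}|e_k^*(x)|$ are those on $B\setminus A$ and on $(A\cup B)^c$ (sets disjoint from the greedy set $A$), not those on $A\setminus B$, and $\min_{k\in A}|e_k^*(x)|\ge\sup_{k\notin A}|e_k^*(x)|$, not $\le$. Finally, the step ``replace $P_{B\setminus A}(x)$ by $t\mathbf 1_{\eta(B\setminus A)}$'' is not a quasi-greedy estimate: it is a convexity argument (the cube $[-t,t]^{B\setminus A}$ is the convex hull of its vertices), and it is exactly the content of the reformulation in Proposition \ref{p:prop_A_projections}, so it costs no constant at all. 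The unique factor of $C_q$ then comes from identifying $P_{(A\cup B)^c}(x)+t\mathbf 1_{\eta(A\setminus B)}$ with $T_t(x-P_B(x))$ and applying the truncation bound $\Vert T_t y\Vert\le C_q\Vert y\Vert$ of Lemma \ref{trun}. With these corrections your argument becomes the paper's proof; as written, the justifications do not support the steps.
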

	
Later (Proposition \ref{RW-infty}) we will see examples of bases with $w$-Property (A)
for a certain weight $w$, but failing the ``classical'' Property (A).


	
	For further use, we need the following reformulation of the $w$-Property (A)
	(inspired by \cite{AA2}).
	
	\begin{prop}\label{p:prop_A_projections}
		A basis $\mathcal B$ has the $C_a$-$w$-Property (A) if and only if 
		\begin{eqnarray}\label{A}
		\Vert x\Vert \leq C_a\Vert x-P_A(x)+\mathbf{1}_{\eta B}\Vert,
		\end{eqnarray}
		for any $x\in\mathbb X$ with $\sup_j |e_j^*(x)| \leq 1$, $A,B\in\mathbb N^{<\infty}$, $w(A)\leq w(B)$, $B\cap \supp(x)=\emptyset$ and $\eta\in\lbrace\pm 1\rbrace$.
	\end{prop}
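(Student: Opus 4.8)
The plan is to show the two implications of the equivalence separately, exploiting the fact that both sides of \eqref{wA} and \eqref{A} involve a vector whose "large" coordinates are to be replaced by a $\pm$-indicator on a set of smaller $w$-measure. The technical glue is scaling: in \eqref{wA} the coefficient $t$ can be taken to be any number dominating $\sup_j|e_j^*(x)|$, and by homogeneity of the norm we may rescale so that $t=1$.

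First I would prove that the $C_a$-$w$-Property (A) implies \eqref{A}. Given $x$ with $\sup_j|e_j^*(x)|\le 1$ and sets $A,B$ as in the statement of Proposition \ref{p:prop_A_projections}, I want to apply \eqref{wA}. The subtlety is that $A$ need not be disjoint from $\supp(x)$, while \eqref{wA} requires the "support of the fixed part" to be disjoint from $A\cup B$. So I would take the fixed vector to be $y:=x-P_A(x)=P_{A^c}(x)$, whose support misses $A$; I still need it to miss $B$, which is exactly the hypothesis $B\cap\supp(x)=\emptyset$. Now $w(A)\le w(B)$, $A\cap B=\emptyset$ (since $B\cap\supp(x)=\emptyset$ forces $B$ disjoint from $A$ only if $A\subseteq\supp(x)$; in general I should first discard from $A$ the indices not in $\supp(x)$, which only decreases $w(A)$, and also remove any overlap — but note that removing indices of $A$ outside $\supp(x)$ changes neither $x-P_A(x)$ nor the validity of $w(A)\le w(B)$, so WLOG $A\subseteq\supp(x)$, hence $A\cap B=\emptyset$). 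Also $\sup_j|e_j^*(y)|\le 1$, so with $t=1$, $\varepsilon_n\equiv$ (the sign of $e_n^*(x)$ on $A$) we get $y+\mathbf{1}_{\varepsilon A}=x$ when we choose $\varepsilon_n=\sgn(e_n^*(x))$ — wait, that only reproduces $x$ if $|e_n^*(x)|=1$ on $A$. That is not assumed, so instead I argue: by the $w$-Property (A) applied with fixed vector $y$, set $A$ with signs $\varepsilon$, and set $B$ with sign $\eta$, and $t=1$, $\|y+\mathbf{1}_{\varepsilon A}\|\le C_a\|y+\mathbf{1}_{\eta B}\|=C_a\|x-P_A(x)+\mathbf{1}_{\eta B}\|$; it then remains to bound $\|x\|$ by $\|y+\mathbf{1}_{\varepsilon A}\|$. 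For this I use the $w$-Property (A) a second time, now in the "$A\leftrightarrow A$" direction: with the same fixed vector $y$, the same set $A$ on both sides (so $w(A)\le w(A)$ trivially), signs $\sgn(e_n^*(x))$ on one side and $\varepsilon$ on the other, and $t=1$; since $x=y+\sum_{n\in A}e_n^*(x)e_n$ and each $|e_n^*(x)|\le 1$, a convexity/averaging argument over the choices of signs (or a direct two-term comparison moving one coordinate at a time) yields $\|x\|\le\|y+\mathbf{1}_{\varepsilon A}\|$ — actually the cleanest route is: $x$ lies in the convex hull of $\{y+\mathbf{1}_{\varepsilon A}:\varepsilon\in\{\pm1\}^A\}$ because $e_n^*(x)\in[-1,1]$, and $w$-Property (A) with $A$ on both sides shows all these extreme-point norms are within a factor $C_a$ of $\|y+\mathbf{1}_{\eta B}\|$; by convexity of the norm, $\|x\|\le\max_\varepsilon\|y+\mathbf{1}_{\varepsilon A}\|\le C_a\|y+\mathbf{1}_{\eta B}\|$. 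That gives \eqref{A} with the same constant $C_a$.

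Next I would prove the converse: \eqref{A} implies the $C_a$-$w$-Property (A). Start from $x$, disjoint finite sets $A,B$ with $w(A)\le w(B)$, $\supp(x)\cap(A\cup B)=\emptyset$, signs $\varepsilon,\eta$, and $t\ge\sup_j|e_j^*(x)|$. By homogeneity assume $t=1$, so $\sup_j|e_j^*(x)|\le 1$. I want to bound $\|x+\mathbf{1}_{\varepsilon A}\|$ by $C_a\|x+\mathbf{1}_{\eta B}\|$. Apply \eqref{A} to the vector $z:=x+\mathbf{1}_{\varepsilon A}$: its coordinates are bounded by $1$, and its support is $\supp(x)\cup A$. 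Take the "$A$" of \eqref{A} to be our set $A$, so $z-P_A(z)=x$, and the "$B$" of \eqref{A} to be our set $B$, which is disjoint from $\supp(z)=\supp(x)\cup A$ since $B$ misses both; then $w(A)\le w(B)$ holds, and \eqref{A} gives $\|z\|=\|x+\mathbf{1}_{\varepsilon A}\|\le C_a\|z-P_A(z)+\mathbf{1}_{\eta B}\|=C_a\|x+\mathbf{1}_{\eta B}\|$, which is exactly \eqref{wA} with $t=1$. Rescaling back to general $t$ finishes this direction.

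The main obstacle I anticipate is the bookkeeping in the forward direction: getting from the indicator-on-$A$ vector $y+\mathbf{1}_{\varepsilon A}$ down to the original $x$, whose $A$-coordinates have moduli at most $1$ but not equal to $1$. The convexity argument sketched above handles it cleanly, but one must be careful that the $w$-Property (A) is only assumed for \emph{disjoint} $A,B$ — in the "$A$ on both sides" application the two sets coincide rather than being disjoint, so strictly speaking I should instead run the averaging argument coordinate by coordinate: comparing $\|y+\mathbf{1}_{\varepsilon A}\|$ with the norm obtained by flipping (or rescaling) a single coordinate of $A$ using a one-element-versus-one-element instance of Property (A), or, more simply, observe that $x$ is a convex combination of the $2^{|A|}$ sign-vectors $y+\mathbf{1}_{\varepsilon' A}$ and each of those, by one application of \eqref{wA} with that $\varepsilon'$ against our $\varepsilon$ and $B$, has norm $\le C_a\|y+\mathbf{1}_{\eta B}\|$; this uses only the hypothesis as stated. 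I would present it this way to keep the constants exactly equal to $C_a$ in both directions.
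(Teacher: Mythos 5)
Your argument is correct and follows essentially the same route as the paper's proof: the forward direction applies the $w$-Property (A) to $P_{A^c}(x)$ and then uses that $x$ lies in the convex hull of $\{P_{A^c}(x)+\mathbf{1}_{\varepsilon A}\}_{\varepsilon\in\{\pm1\}^A}$, and the converse substitutes $x'=x+\mathbf{1}_{\varepsilon A}$ into \eqref{A}. Your explicit reduction to $A\subseteq\supp(x)$ (to guarantee $A\cap B=\emptyset$ before invoking the $w$-Property (A)) is a small point the paper leaves implicit, but it does not change the argument.
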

	
	The proof requires a technical result.
	
	\begin{lem}\label{l:appr_fun_supp}
		Suppose $D$ is a finite subset of $\N$, and $x \in \mathbb X \backslash \{0\}$
		satisfies $\supp(x) \cap D = \emptyset$.
		Then for any $\varepsilon > 0$ there exists a finitely supported $y \in \mathbb X$, so that
		$\Vert x - y\Vert < \varepsilon$, $\supp(y) \cap D = \emptyset$,
		and $\max_j |e_j^*(x)| = \max_j |e_j^*(y)|$.
	\end{lem}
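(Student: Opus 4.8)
The plan is to start with a finitely supported approximant of $x$ obtained from the Schauder expansion, then correct it so that the coordinate with the largest absolute value matches $\max_j |e_j^*(x)|$ exactly. Write $M = \max_j |e_j^*(x)| > 0$ (which is positive since $x \neq 0$), and fix an index $j_0$ with $|e_j^*(x)| < |e_{j_0}^*(x)| + \delta$ controlled, more precisely choose $j_0$ with $|e_{j_0}^*(x)| = M$ if the maximum is attained, or handle the case where it is only approached in the limit. Since $x = \sum_j e_j^*(x) e_j$ converges in norm and $\supp(x) \cap D = \emptyset$, for any $\varepsilon' > 0$ there is $N$ so that $z := S_N(x) - P_D(S_N(x))$ — i.e. the partial sum with any stray $D$-coordinates deleted — satisfies $\|x - z\| < \varepsilon'$; note $\supp(z) \cap D = \emptyset$ automatically, and $z$ is finitely supported. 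The only defect is that $\max_j |e_j^*(z)|$ may differ slightly from $M$.

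Next I would repair the leading coordinate. Pick an index $k$ with $|e_k^*(x)| = M$ (if the supremum defining $M$ is attained) or an index $k \notin D \cup \supp(z)$ together with a scalar adjustment otherwise; the cleanest route is to observe that $\supp(x)$ is at most countable, so either $M$ is attained at some $k_0 \in \supp(x)$, or $\sup_j|e_j^*(x)|$ is a limit of $|e_j^*(x)|$ along a sequence — but since $\sum_j |e_j^*(x)|$-terms go to $0$ (coefficients of a convergent series, using A1 to pass between $e_j^*(x)$ and the normalization), in fact the supremum is always attained. So fix $k_0 \in \supp(x)$ with $|e_{k_0}^*(x)| = M$. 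Now set $y := z - e_{k_0}^*(z) e_{k_0} + e_{k_0}^*(x) e_{k_0}$, i.e. overwrite the $k_0$-th coordinate of $z$ with the exact value $e_{k_0}^*(x)$. Then $\|z - y\| = |e_{k_0}^*(x) - e_{k_0}^*(z)|\,\|e_{k_0}\| \leq c_2 |e_{k_0}^*(x-z)| \leq c_2 \|e_{k_0}^*\|\,\|x-z\| \leq c_2^2 \varepsilon'$, so $\|x - y\| \leq (1 + c_2^2)\varepsilon' < \varepsilon$ by choosing $\varepsilon'$ small. Also $\supp(y) \cap D = \emptyset$ since $k_0 \in \supp(x)$ is disjoint from $D$ and $\supp(z) \cap D = \emptyset$.

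Finally I must check $\max_j |e_j^*(y)| = M$. On one hand $|e_{k_0}^*(y)| = |e_{k_0}^*(x)| = M$, so $\max_j |e_j^*(y)| \geq M$. For the reverse inequality I need every other coordinate of $y$ to be at most $M$ in absolute value; but the coordinates of $y$ away from $k_0$ are exactly the coordinates of $z = S_N(x) - P_D S_N(x)$, which are either $0$ or equal to $e_j^*(x)$ for $j \in \supp(x)$, hence bounded by $M$. Thus $\max_j|e_j^*(y)| = M$, completing the proof.

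The only genuinely delicate point is the claim that the supremum $\sup_j |e_j^*(x)|$ is attained; this is where I expect to spend the most care. It follows because $|e_j^*(x)| \to 0$ as $j \to \infty$: indeed $e_j^*(x) e_j = S_j(x) - S_{j-1}(x) \to 0$ in norm as $j \to \infty$ (both partial sums converge to $x$), and $\|e_j^*(x) e_j\| \geq c_1 |e_j^*(x)|$ by A1, so $|e_j^*(x)| \to 0$. A sequence of reals tending to $0$ attains its supremum in absolute value (the sup is either $0$, forcing $x = 0$, which is excluded, or it is attained at some finite index). With that in hand, all the estimates above go through with only routine use of A1.
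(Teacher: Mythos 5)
Your proof is correct, and it reaches the conclusion by a slightly different route than the paper. Both arguments first replace $x$ by a finitely supported vector whose support avoids $D$ (you use the partial sum $S_N(x)$, the paper uses an arbitrary finitely supported approximant followed by the projection $P_D^c$ -- for your choice this projection is vacuous since $\supp(S_N(x))\subseteq\supp(x)$ already misses $D$). The difference is in how the maximal coefficient is restored: the paper normalizes $\max_j|e_j^*(x)|=1$ and then rescales the entire approximant $u$ by $1/C$ with $C=\max_j|e_j^*(u)|\in(1-c_2\delta,1+c_2\delta)$, which needs no information about where the supremum of $|e_j^*(x)|$ is attained; you instead overwrite the single coordinate $k_0$ at which $|e_{k_0}^*(x)|=\max_j|e_j^*(x)|$, which forces you to prove that this supremum is attained. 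Your argument for attainment (the coefficients $|e_j^*(x)|$ tend to $0$ because $e_j^*(x)e_j=S_j(x)-S_{j-1}(x)\to 0$ and $\Vert e_j\Vert\geq c_1$) is correct, and it is the genuinely extra ingredient your approach requires; in exchange, your $y$ agrees with $x$ in every coordinate of its support except for the truncation, whereas the paper's rescaling perturbs all coordinates of $u$ slightly. The verification that no other coordinate of $y$ exceeds $M$ is immediate in your construction since those coordinates are literally coordinates of $x$. Both proofs are elementary and yield the same statement; no gap.
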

	
	\begin{proof}
		It suffices to consider $\varepsilon < 1/(2c_2)$.
		By scaling, we can assume that $\max_j |e_j^*(x)| = 1$ (then $\Vert x \Vert \geq 1/c_2$).
		Clearly $P_D(x) = 0$, and $P_D^c(x) = x$. Now set $\delta = \varepsilon/(3c_2^2 \Vert x\Vert)$.
		As ${\textrm{span }}[e_j : j \in \N]$ is dense in $\mathbb X$, there exists a finitely supported $z \in \mathbb X$
		so that $\Vert x - z\Vert < \delta/\Vert P_D^c\Vert$. Let $u = P_D^c(z)$, then
		$\Vert x - u\Vert = \Vert P_D^c(x-z)\Vert < \delta$. For every $j$, $|e_j^*(x-u)| < c_2 \delta$,
		hence $C = \max_j |e_j^*(x)| \in (1 - c_2 \delta, 1 + c_2 \delta)$.
		Now let $y = u/C$. Then $\max_j |e_j^*(y)| = 1$, and
		$$
		\Vert x-y\Vert \leq \Vert x - u\Vert + |1 - C^{-1}| \Vert u\Vert <
		\delta + \frac{c_2 \delta}{1 - c_2 \delta} ( \Vert x\Vert + \delta)< \varepsilon .
		\qedhere
		$$
	\end{proof}

	\begin{proof}[Proof of Proposition \ref{p:prop_A_projections}]
		By Lemma \ref{l:appr_fun_supp}, it suffices to restrict our attention to finitely
		supported vectors $x \in \mathbb X$ only. So, throughout this proof, we assume
		$|\supp(x)| < \infty$.
		
		Suppose that $\mathcal B$ has the $C_a$-$w$-Property (A),
		and $x, A, B, \varepsilon, \eta$ are as in the statement of the proposition with $\sup_j \vert e_j^*(x)\vert \leq 1$.
		Applying the definition of $w$-Property (A) to $P_{A^c} x$, $A$, and $B$, we obtain
		$$
		\Vert P_{A^c}(x)+\mathbf{1}_{\varepsilon A}\Vert \leq C_a\Vert P_{A^c}(x)+\mathbf{1}_{\eta B}\Vert =
		C_a\Vert x - P_A(x)+\mathbf{1}_{\eta B}\Vert .
		$$
		To finish the proof, observe that $x$ belongs to the convex hull of the set
		$\big\{ P_{A^c}(x)+\mathbf{1}_{\varepsilon A} \}_{\varepsilon \in \{\pm 1\}}$.
		
		Now, suppose \eqref{A}, and prove that the basis $\mathcal B$ has the
		$w$-Property (A) with the same constant. Take $x\in\mathbb X$ 
		and $\sup_j \vert e_j^*(x)| \leq 1$, $A,B\in\mathbb N^{<\infty}$ such that $w(A)\leq w(B)$,
		$A\cap B=\emptyset$, $\supp(x)\cap (A\cup B)=\emptyset$ and $\varepsilon,\eta\in\lbrace \pm 1\rbrace$.
		Define $x' = x+\mathbf{1}_{\varepsilon A}$. Using \eqref{A},
		$$\Vert x+\mathbf{1}_{\varepsilon A}\Vert =
		\Vert x'\Vert\leq C_a\Vert x'-P_A(x')+\mathbf{1}_{\eta B}\Vert = C_a\Vert x+\mathbf{1}_{\eta B}\Vert. \qedhere$$
	\end{proof}

	\begin{proof}[Proof of Theorem \ref{1wgreedy}:]
		Assume that $\mathcal B$ is $C_g$-$w$-greedy.
		
		\underline{Unconditionality:} Let $x\in\mathbb X$ and $A\subset \supp(x)$. Define $y:= P_{A^c}(x)+\sum_{n\in A}(\alpha + e_n^*(x))e_n$, where $$\alpha > \sup_{j\in A}\vert e_j^*(x)\vert + \sup_{j\in A^c}\vert e_j^*(x)\vert.$$
		
		As $A$ is a greedy set of $y$,
		$$\Vert x-P_{A}(x)\Vert = \Vert y - P_A(y)\Vert \leq C_g \sigma_{w(A)}^w(y)\leq C_g\Vert y-\alpha\mathbf{1}_A\Vert = C_g\Vert x\Vert.$$ Thus, the basis is unconditional with constant $K_u\leq C_g$.
		
		\underline{$w$-Property (A):} Fix $x\in\mathbb X$, take $t\ge \sup_n \vert e_n^*(x)\vert$.
		Consider $\varepsilon,\eta\in\lbrace \pm 1\rbrace$ and finite sets 
		$A,B$ such that $A \cap B = \emptyset$,
		$w(A)\leq w(B)$, and $(A \cup B)\cap \supp(x)=\emptyset$. Set $y:= x+t\mathbf{1}_{\varepsilon A} + (t+\delta)\mathbf{1}_{\eta B}$ with $\delta>0$. Hence,
		$$\Vert x+t\mathbf{1}_{\varepsilon A}\Vert = \Vert y-\mathcal{G}_{\vert B\vert}(y)\Vert \leq C_g\sigma_{w(B)}^w(y)\leq C_g\Vert y-t\mathbf{1}_{\varepsilon A}\Vert = C_g\Vert x+(t+\delta)\mathbf{1}_{\eta B}\Vert.$$
		
		Taking $\delta \rightarrow 0$, we obtain that the basis satisfies the $w$-Property (A) with constant $C_a\leq C_g$.\newline
		
		Next we prove that if $\mathcal B$ is $K_u$-unconditional and has the $C_a$-$w$-Property (A), then it is $w$-greedy.
		
		Take $x\in\mathbb X$ and suppose that $A$ is a greedy set of cardinality $m$ for $x\in\mathbb X$ -- that is, $P_A(x)=\mathcal{G}_m(x)$.
		For $\varepsilon > 0$ find $y\in\mathbb X$ such that $\Vert x-y\Vert <\sigma^w_{w(A)}(x)+\varepsilon$,
		with $\supp(y)=B$ and $w(B)\leq w(A)$. Then, taking
		$t:=\min\lbrace \vert e_j^*(x)\vert : j\in A\rbrace$ and $\eta \equiv \sgn\lbrace e_j^*(x)\rbrace$,
		using the the reformulation of the $w$-Property (A) and Lemma \ref{trun}, we obtain that
		\begin{eqnarray*}
			\Vert x-\mathcal G_m(x)\Vert &\leq& C_a\Vert x-P_A(x)-P_{B\setminus A}(x)+t\mathbf{1}_{\eta(A\setminus B)}\Vert = C_a\Vert P_{(A\cup B)^c}(x-y)+t\mathbf{1}_{\eta (A\setminus B)}\Vert \\
			&=&C_a\Vert T_t (I-P_B)(x)\Vert = C_a\Vert T_t (I-P_B)(x-y)\Vert \leq K_uC_a\Vert x-y\Vert.
		\end{eqnarray*}
		Consequently, for any greedy set $A$ we have $\Vert x-P_Ax\Vert \leq K_uC_a\sigma^w_{w(A)}(x)$.
	\end{proof}

	\begin{proof}[Proof of Theorem \ref{1walmostgreedy}]
		Assume that $\mathcal B$ is $C_{al}$-$w$-almost-greedy.
		
		\underline{Quasi-greedy:} Since
		$$\Vert x-\mathcal{G}_m(x)\Vert \leq C_{al}\inf\lbrace \Vert x-\sum_{n\in B}e_n^*(x)e_n\Vert : w(B)\leq w(A_m(x)), B\in\mathbb N^{<\infty}\rbrace,$$
		we can select $B=\emptyset$. Then, we obtain that $\Vert x-\mathcal G_m(x)\Vert \leq C_{al}\Vert x\Vert$,
		hence the basis is quasi-greedy with constant $C_q\leq C_{al}$.
		
		\underline{$w$-Property (A):} We can use the same argument as in Theorem \ref{1wgreedy}.
		
		Now, we will prove that if $\mathcal B$ is $C_q$-quasi-greedy and has the $C_a$-$w$-Property (A), then it is $w$-almost-greedy.
		
		For $x\in\mathbb X$, let $A$ be a greedy set of cardinality $m$. 
		For $\varepsilon>0$, find $B$ such that $\Vert x-P_B(x)\Vert <\tilde{\sigma}_{w(A)}^w(x)+\varepsilon$, with
		$w(B)\leq w(A)$. Then, taking
		$t:=\min\lbrace \vert e_j^*(x)\vert : j\in A\rbrace$ and
		$\eta\equiv \sgn\lbrace e_j^*(x)\rbrace$, using the reformulation of the $w$-Property (A)
		and Lemma \ref{trun}, 
		\begin{eqnarray*}
			\Vert x-\mathcal G_m(x)\Vert &\leq& C_a\Vert P_{(A\cup B)^c}(x-y)+t\mathbf{1}_{\eta (A\setminus B )}\Vert \\
			&=&C_a\Vert T_t (I-P_B)(x)\Vert \leq C_qC_a\Vert x-P_B(x)\Vert.
		\end{eqnarray*}
		This gives that, for any greedy set $A$, $\Vert x-P_A(x)\Vert \leq C_qC_a\tilde{\sigma}_{w(A)}(x)$ as desired.
	\end{proof}

	\begin{rem}\label{r:def_prop_A}
		In this paper, we focus on the situation when $\mathcal B$ is a Schauder basis.
		However, the $w$-Property (A) can be defined for any complete biorthogonal system
		satisfying the conditions (A1)-(A3); the proof of Proposition \ref{p:prop_A_projections} goes through as well.
		
		Moreover, in the definition of the $w$-Property (A), it suffices to show that
		\eqref{wA} holds for with $\max_j |e_j^*(x)| = t$. More specifically,
		the following four statements are equivalent:
		\begin{itemize}
			\item[(a)] $\mathcal B$ satisfies the $w$-Property (A) (see Definition \ref{d:w_prop_A}).
			\item[(b)] There exists a constant $C$ so that $\Vert x\Vert \leq C\Vert x-P_A(x)+t{\mathbf{1}}_{\eta B}\Vert$
			for any $\eta\in\lbrace\pm 1\rbrace$, $x\in\mathbb X$, $B\cap \supp(x)=\emptyset$, $w(A)\leq w(B)$
			and $t\geq\sup_j\vert e_j^*(x)\vert$.
			\item[(c)] There exists a constant $C^\prime$ so that
			$\Vert x+s\mathbf{1}_{\varepsilon A}\Vert \leq C\Vert x+s\mathbf{1}_{\eta B}\Vert$
			for any $x\in\mathbb X$, $w(A)\leq w(B)$, $A\cap B=\emptyset$, $\supp(x)\cap (A\cup B)=\emptyset$,
			$\varepsilon, \eta \in \lbrace \pm 1\rbrace$ and $s = \sup_j \vert e_j^*(x)\vert$.
			\item[(d)] There exists a constant $C^\prime$ so that $\Vert x\Vert \leq
			C^\prime \Vert x-P_A(x)+s{\mathbf{1}}_{\eta B}\Vert$ for any $x\in\mathbb X$, $\eta\in\lbrace\pm 1\rbrace$,
			$B\cap \supp(x)=\emptyset$, $w(A)\leq w(B)$ and $s = \sup_j\vert e_j^*(x)\vert$.
		\end{itemize}
		Indeed, the implications (a) $\Rightarrow$ (c) and (b) $\Rightarrow$ (d) (with $C^\prime = C$)
		are immediate. The equivalence (a) $\Leftrightarrow$ (b) (with the same constant $C$)
		has been established in Proposition \ref{p:prop_A_projections}.
		Minor adjustments to that argument give us (c) $\Leftrightarrow$ (d).
		
		To establish (d) $\Rightarrow$ (b), take $x,A,B,\eta$ as in (b) and $t\geq \sup_j\vert e_j^*(x)\vert$.
		As before, we can assume that $x$ is finitely supported.
		Find $k$ so that $\vert e_k^*(x) \vert = \sup_j\vert e_j^*(x)\vert$.
		By replacing $x$ by $-x$ if necessary, we can assume $s = e_k^*(x) \geq 0$.
		Let $c = t-s$, and consider
		$$
		x^\prime = x + c e_k = \sum_{j \in \supp(x) \backslash \{k\}} e_j^*(x) e_j + t e_k .
		$$
		Note that $\Vert x - x^\prime\Vert \leq c c_2 \leq t c_2$.
		Furthermore, $x^\prime - P_A (x^\prime)$ equals either $x - P_A(x)$ (if $k \in A$), or
		$x - P_A(x) + c e_k$ (if $k \notin A$). In either case,

		$$
		\Vert x - P_A(x) +t{\mathbf{1}}_{\eta B}\Vert \geq 
		\Vert x^\prime-P_A(x^\prime)+t{\mathbf{1}}_{\eta B}\Vert - t c_2 .
		$$
		
		By (d), we have $\Vert x^\prime \Vert \leq C^\prime \Vert x^\prime-P_A(x^\prime)+t{\mathbf{1}}_{\eta B}\Vert$.
		By the above,
		$$
		\Vert x \Vert - t c_2 \leq C^\prime (\Vert x - P_A(x) + t{\mathbf{1}}_{\eta B}\Vert + t c_2)
		$$
		As $\Vert x - P_A(x) + t{\mathbf{1}}_{\eta B}\Vert \geq t c_2^{-1}$, we conclude that
		$\Vert x \Vert \leq (C^\prime + 2 c_2^2) \Vert x - P_A(x) + t{\mathbf{1}}_{\eta B}\Vert$.
	\end{rem}

	\section{Some remarks on the $w$-Property (A)}\label{s:prop_A}

	\begin{defn}\label{d:equivalent}
		Let $v=(v_n)_{n=1}^\infty$ and $w=(w_n)_{n=1}^\infty$ be weights. We say that $v$ is {\bf equivalent} to $w$, written $v\approx w$, whenever there exist positive real constants $0<a\leq b<\infty$ satisfying
		$$av_n\leq w_n\leq bv_n\;\;\;\text{ for all }n\in\mathbb{N}.$$\end{defn}
	
	
	\begin{prop}\label{w-equivalent}
		Let $v, w$ weights and suppose that $v\approx w$.
		Then every basis with the $w$-Property (A) also has the $v$-Property (A).\end{prop}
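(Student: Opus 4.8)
The plan is to reduce the statement to the reformulation of the $w$-Property (A) in terms of projections, given in Proposition~\ref{p:prop_A_projections}: namely, $\mathcal B$ has the $C_a$-$w$-Property (A) if and only if $\Vert x\Vert \leq C_a\Vert x-P_A(x)+\mathbf{1}_{\eta B}\Vert$ for all finitely supported $x$ with $\sup_j|e_j^*(x)|\leq 1$, all finite $A,B$ with $w(A)\leq w(B)$, $B\cap\supp(x)=\emptyset$, and all $\eta\in\{\pm 1\}$. The obstruction to applying this directly is that $w(A)\leq w(B)$ does \emph{not} imply $v(A)\leq v(B)$: passing between the two weights distorts measures by the factor $b/a$ coming from $av_n\leq w_n\leq bv_n$. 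So the real content is a combinatorial lemma: if $v(A)\leq v(B)$, then one can find a set $\widetilde B\supseteq B$ (or a suitable enlargement) with $w(A)\leq w(\widetilde B)$ whose size we control, or alternatively one splits $A$ into boundedly many pieces each of which is $w$-dominated by $B$.

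First I would establish the combinatorial step. Suppose $v(A)\leq v(B)$ with $A,B$ finite and disjoint. Then $w(A)\leq b\,v(A)\leq b\,v(B)\leq (b/a)\,w(B)$. Writing $N=\lceil b/a\rceil$, I want to realize $N$ disjoint ``copies'' of $B$ inside $\N\setminus(\supp(x)\cup A)$; since $\N$ is infinite and everything in sight is finite, I can pick disjoint finite sets $B=B_1,B_2,\dots,B_N$ with $w(B_j)\geq w(B_1)=w(B)$ — in fact just choose each $B_j$ with $|B_j|=|B|$ and $w(B_j)\geq w(B)$, possible because the $w_n$ are positive and we may move rightward to tail indices where we only need finitely many coordinates to match a prescribed lower bound; more simply, translate $B$ far enough to the right that (using only that $\inf_n w_n>0$ is \emph{not} assumed, so instead) — here the cleanest route is: since only finitely many constraints are active, pick pairwise disjoint $B_1,\dots,B_N$, each disjoint from $\supp(x)\cup A$, with $\min_j w(B_j)\geq w(B)$; this is possible by choosing sets of indices with $w$-measure at least $w(B)$, which exists as the total $w$-mass is infinite. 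Then $w(A)\leq N\,\min_j w(B_j)\leq \sum_{j=1}^N w(B_j)=w\big(\bigsqcup_j B_j\big)$.

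Next I would run the $w$-Property (A) (in the projection form \eqref{A}) with the same $x$, the same $A$, and the set $\widetilde B:=\bigsqcup_{j=1}^N B_j$, getting $\Vert x\Vert\leq C_a\Vert x-P_A(x)+\mathbf{1}_{\eta\widetilde B}\Vert$ for any sign sequence $\eta$ on $\widetilde B$. To finish I must convert $\mathbf{1}_{\eta\widetilde B}$ back to $\mathbf{1}_{\eta B}$, and this is where I would invoke unconditionality-type control that is \emph{not} available in general — so instead I should go the other way: start from $\mathbf{1}_{\eta B}$ as the \emph{target} and enlarge the left-hand set. That is, the correct maneuver is to apply the $w$-Property (A) with roles arranged so that $B$ stays on the right: we have $v(A)\leq v(B)$, and I instead \emph{shrink} on the left or \emph{pad} on the right. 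Concretely: choose $\widehat B\subseteq$ tail of $\N$ disjoint from everything with $|\widehat B|$ small and $w(\widehat B)$ as large as needed so that $w(A\cup\varnothing)\leq w(B\cup\widehat B)$ is \emph{not} what we want either. The honest fix is to expand $B$: since $w(A)\leq (b/a)w(B)$, pick pairwise disjoint translates $B=C_1,\dots,C_N$ of $B$ placed consecutively to the right of $A\cup B\cup\supp(x)$ so that $\mathbf 1_{\eta B}$ on the left can be replaced, using the $w$-Property (A) applied to the vector $x':=x+\mathbf 1_{\eta B}$...

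Honestly, the slick route is the following and I would write it this way: given $v(A)\leq v(B)$, set $N=\lceil b/a\rceil$ and pick disjoint finite sets $B_1,\dots,B_N$, each disjoint from $A\cup\supp(x)$, with $w(B_i)\ge w(A)/N$ — possible since the partial sums $\sum_{n\le m}w_n\to\infty$. Apply the $v$-target inequality in the chained form: $\Vert x+t\mathbf 1_{\varepsilon A}\Vert$ is dominated, via $N$ successive applications of the $w$-Property (A) peeling off $B_1,\dots,B_N$ one at a time (at each stage the ``new'' coordinates are absorbed into the fixed-coefficient part and the hypothesis $\sup_j|e_j^*(\cdot)|\le t$ is preserved since all new coefficients equal $t$), by $C_a^N\Vert x+t\mathbf 1_{\eta(B_1\cup\cdots\cup B_N)}\Vert$, and then collapse $B_1\cup\cdots\cup B_N$ down to $B$ by one final application of the $w$-Property (A) in the reverse direction (valid because $w(B)\le w(B_1\cup\cdots\cup B_N)$ fails in the wrong direction — so instead one uses $w(B_1\cup\cdots\cup B_N)\ge w(A)$ together with $w(A)\le ?$). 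I expect the main obstacle to be exactly this bookkeeping — arranging the enlargements so every intermediate application of the $w$-Property (A) has its measure hypothesis pointing the correct way — and the output will be the $v$-Property (A) with constant $C_a^{\lceil b/a\rceil}$.
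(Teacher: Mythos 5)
Your proposal does not close. You correctly identify the obstruction ($w(A)\leq v$-domination only gives $w(A)\leq \frac{b}{a}\,w(B)$) and you even name the two candidate strategies, but you then commit to the wrong one — enlarging $B$ — and the write-up ends by conceding that the ``bookkeeping'' of making every measure inequality point the right way is unresolved. It is unresolved for a structural reason: the $w$-Property (A) only transfers norm estimates \emph{toward} sets of larger $w$-measure, so once you have replaced $B$ by $\widetilde B=B_1\cup\dots\cup B_N$ with $w(\widetilde B)\geq w(A)$, there is no admissible application of the property that collapses $\mathbf{1}_{\eta\widetilde B}$ back down to $\mathbf{1}_{\eta B}$ (that step would need $w(\widetilde B)\leq w(B)$, which is false by construction). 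There is also a second, independent gap: your construction of the auxiliary sets $B_1,\dots,B_N$ with $w(B_j)\geq w(B)$ (or $\geq w(A)/N$) invokes ``the total $w$-mass is infinite,'' but a weight is an arbitrary sequence in $(0,\infty)^{\mathbb N}$ and may well be summable, in which case no such disjoint sets exist once $w(B)$ exceeds the tail mass.

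The paper takes the other route you mention in passing and then abandon: partition $A$ rather than enlarge $B$. Set $\Gamma=\{n\in A: w_n\geq w(B)\}$; the bound $w(A)\leq\frac{b}{a}w(B)$ forces $|\Gamma|\leq b/a$, so $\mathbf{1}_{\varepsilon\Gamma}$ is handled crudely by the triangle inequality and the uniform bounds on $\Vert e_n\Vert$ (together with $\Vert x+\mathbf{1}_{\eta B}\Vert\geq 1/c_2$). The remainder $A\setminus\Gamma$ is split greedily into consecutive blocks $A_1<\dots<A_m$, each maximal subject to $w(A_i)\leq w(B)$; maximality plus $w(A)\leq\frac{b}{a}w(B)$ gives $m\leq\frac{2b}{a}+1$. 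Now every application of the $w$-Property (A) has its hypothesis pointing the correct way: $\Vert\frac{x}{m}+\mathbf{1}_{\varepsilon A_i}\Vert\leq C_a\Vert\frac{x}{m}+\mathbf{1}_{\eta B}\Vert$ for each $i$, and the sum over $i$ is reassembled into a multiple of $\Vert x+\mathbf{1}_{\eta B}\Vert$ using the triangle inequality and one more application of the $w$-Property (A) with $A=\emptyset$ to control $\Vert x\Vert$. This requires no assumption on $\sum_n w_n$ and no reverse-direction step. If you want to salvage your draft, replace the enlargement of $B$ by this decomposition of $A$.
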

	
	\begin{proof}
		Let $x\in\mathbb X$ with $\vert\supp(x)\vert<\infty$ and $\sup_j\vert e_j^*(x)\vert \leq 1$, $A$ and $B$ finite satisfying $v(A)\leq v(B)$, $A\cap B=\emptyset$, $\supp(x)\cap (A\cup B)=\emptyset$ and $\varepsilon,\eta\in\lbrace\pm 1\rbrace$. We set
		$$\Gamma=\left\{n\in A:w_n\geq w(B)\right\}.$$
		Observe that
		$$w(A)\leq b\cdot v(A)\leq b\cdot v(B)\leq\frac{b}{a}\cdot w(B),$$
		which gives us
		$$w(A)\geq w(\Gamma)\geq\vert\Gamma\vert\cdot w(B)\geq \vert\Gamma\vert\cdot\frac{a}{b}\cdot w(A),$$
		and hence $\vert\Gamma\vert\leq b/a$. Next, we give the following partition of $A\setminus\Gamma$: $A_1<\ldots<A_m,$
		so that for each $i=1,\ldots,m$, the set $A_i$ is a maximal such that $w(A_i)\leq w(B)$. Due to maximality,
		$$w(B)<w(A_i)+w(A_{i+1})\text{ for all }i=1,\ldots,m-1.$$
		Thus,
		$$(m-1)\cdot w(B)<\sum_{i=1}^{m-1}\left[w(A_i)+w(A_{i+1})\right]<2\cdot w(A\setminus\Gamma)\leq 2\cdot w(A)\leq\frac{2b}{a}\cdot w(B).$$
		This gives us
		$$m\leq\frac{2b}{a}+1.$$
		
		Hence, using the bounds of $\vert \Gamma\vert$, $m$ and the condition of the $w$-Property (A),
		\begin{eqnarray*}
			\Vert x+\mathbf{1}_{\varepsilon A}\Vert &\leq& \Vert \mathbf{1}_\Gamma\Vert + \Vert x+\sum_{i=1}^{m}\mathbf{1}_{\varepsilon A_i}\Vert \leq \sum_{n\in\Gamma}\Vert e_n\Vert + \sum_{i=1}^m\Vert \frac{x}{m}+\mathbf{1}_{\varepsilon A_i}\Vert\\
			&\leq& c_2^2\vert\Gamma\vert\Vert x+\mathbf{1}_{\eta B}\Vert + C_a m \Vert \frac{x}{m}+\mathbf{1}_{\eta B}\Vert \leq \frac{c_2^2 b}{a}\Vert x+\mathbf{1}_{\eta B}\Vert + C_a\Vert x+m\mathbf{1}_{\eta B}\Vert\\
			&\leq& \frac{c_2^2 b}{a}\Vert x+\mathbf{1}_{\eta B}\Vert + C_a m \Vert x+\mathbf{1}_{\eta B}\Vert + C_a(m-1)\Vert x\Vert\\
			&\leq& \frac{c_2^2 b}{a}\Vert x+\mathbf{1}_{\eta B}\Vert + C_a m \Vert x+\mathbf{1}_{\eta B}\Vert + C_a^2(m-1)\Vert x+\mathbf{1}_{\eta B}\Vert\\
			&\leq & \left(\frac{c_2^2 b+2bC_a^2}{a}\right)\Vert x+\mathbf{1}_{\eta B}\Vert.
		\end{eqnarray*}
	\end{proof}

	\begin{rem}\label{cons-equiv}
		In a similar fashion, one can show that, if the weights $w$ and $v$ are equivalent,
		then any $w$-democratic ($w$-superdemocratic, $w$-conservative --
		for the definitions, see below) basis is also $v$-democratic
		(resp.~$v$-superdemocratic or $v$-conservative).
	\end{rem}
	
	\begin{rem}The converse to Proposition \ref{w-equivalent} does not hold in general.
		For example, suppose the weights $w, v$ belong to $\ell_1$.
		By \cite{DKTW}, the family of $w$-democratic (or $v$-democratic) bases consists precisely
		of those bases which are equivalent to the canonical basis of $c_0$.
		However, $w$ and $v$ need not be equivalent.
	\end{rem}

	The rest of this section is motivated by the recent definition of $w$-semi-greedy bases introduced in \cite{DKTW}.
	To give this notion, we need the Chebyshev Greedy Algorithm:
	for $x\in\mathbb X$ and $m \in \N$, if $A_m(x)$ is the greedy set of $x$ with cardinality $m$, we define the Chebyshev Greedy Approximand of order $m$ as any $\overline{G}_m(x)\in span\lbrace e_i : i\in A_m(x)\rbrace$ such that
	$$\Vert x-\overline{G}_m(x)\Vert = \min\lbrace \Vert x-\sum_{n\in A_m(x)}b_n e_n\Vert : b_n\in\mathbb R\rbrace.$$
	\begin{defn}
		We say that $\mathcal B$ is \textbf{$w$-semi-greedy} if there exists a constant $C\geq 1$ such that
		\begin{eqnarray}\label{csemi}
		\Vert x-\overline G_m(x)\Vert \leq C\sigma_{w(A_m(x))}^w(x),\; \forall x\in\mathbb X, \forall m\in \mathbb N.
		\end{eqnarray}
		We denote by $C_{sg}$ the least constant that satisfies \eqref{csemi} and we say that $\mathcal B$ is $C_{sg}$-$w$-semi-greedy.
	\end{defn}
	
	By \cite{DKTW}, any $w$-semi-greedy basis is $w$-superdemocratic.
	
	\begin{defn}
		We say that $\mathcal B$ is $w$-\textbf{superdemocratic} if there exists a constant $C\geq 1$ such that
		\begin{eqnarray}\label{supe}
		\Vert \mathbf{1}_{\varepsilon A}\Vert \leq C\Vert \mathbf{1}_{\eta B}\Vert,
		\end{eqnarray}
		for any $A,B\in\mathbb N^{<\infty}$ with
		$w(A)\leq w(B)$ and $\varepsilon, \eta \in \lbrace \pm 1\rbrace$.
		
		We denote by $C_s$ the least constant that satisfies \eqref{supe} and we say that $\mathcal B$ is $C_s$-superdemocratic.
	\end{defn}
	
	Here, we show that 
	\begin{eqnarray}\label{impli}
	w-\text{semi-greedy}\Rightarrow w-\text{Property (A)} \Rightarrow w-\text{superdemocracy}.
	\end{eqnarray}

	\begin{thm}\label{t:semi_greedy}
		If a basis $\mathcal{B}$ is $w$-semi-greedy, then $\mathcal B$ has the $w$-Property (A).
	\end{thm}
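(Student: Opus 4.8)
The plan is to show directly that $w$-semi-greediness implies the reformulated version of the $w$-Property (A) from Proposition \ref{p:prop_A_projections}, namely that $\Vert x\Vert \leq C_a\Vert x - P_A(x) + \mathbf{1}_{\eta B}\Vert$ for suitable $x, A, B, \eta$ with $\sup_j|e_j^*(x)|\leq 1$, $w(A)\leq w(B)$, and $B\cap\supp(x)=\emptyset$. By Lemma \ref{l:appr_fun_supp} we may assume $x$ is finitely supported, and after a small perturbation we may even assume $|e_j^*(x)|<1$ strictly on $\supp(x)$ and that $B\cap\supp(x)=\emptyset$, $A\cap B=\emptyset$ (absorb $A\cap B$ into the common support, since $P_A$ only sees $A\cap\supp(x)$). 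Write $m=|B|$.

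The idea is to build a single vector $z$ whose greedy set of size $m$ is exactly $B$, whose Chebyshev greedy approximand of order $m$ can be related to $x$, and whose $\sigma^w_{w(B)}$-error is controlled by $\Vert x - P_A(x)+\mathbf{1}_{\eta B}\Vert$. Concretely, first I would take $t > 1 \geq \sup_j|e_j^*(x)|$ and consider $z := (x - P_A(x)) + t\,\mathbf{1}_{\eta B} + t\,\mathbf{1}_{\varepsilon A}$ — no, more carefully: put the large coefficients on $B$ and on $A$ so that $B\cup A$ (or a size-$m$ subset) is forced to be greedy. Since $w(A)\le w(B)$ and $|A\cap\supp(x)|$ may differ from $m$, the cleanest route is to pad: choose the coefficients on $B$ to all equal some $t>1$ and observe that $B$ is then a greedy set of order $m=|B|$ for $z := y + t\,\mathbf{1}_{\eta B}$ where $y := x - P_A(x)$ has $\sup_j|e_j^*(y)|\le 1 < t$ and $\supp(y)\cap B=\emptyset$. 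Apply $w$-semi-greediness with this greedy set $B$: $\Vert z - \overline G_m(z)\Vert \leq C_{sg}\,\sigma^w_{w(B)}(z)$. For the right-hand side, since $w(A)\leq w(B)$ we may approximate $z$ by the vector supported on $A\cup(\supp(y)\cap A)$... — the point is that $\sigma^w_{w(B)}(z) \leq \Vert z - (\text{something supported on a set of }w\text{-measure}\le w(B))\Vert$; choosing that something to kill the $t\mathbf 1_{\eta B}$ part plus $P_A(y)=0$ appropriately yields $\sigma^w_{w(B)}(z)\leq \Vert y\Vert = \Vert x - P_A(x)\Vert$, or more usefully $\leq \Vert x - P_A(x) + \mathbf 1_{\eta B}\Vert$ after adjusting which coefficients we subtract. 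For the left-hand side, $\overline G_m(z)$ is the best approximation from $\mathrm{span}\{e_i:i\in B\}$, so $z - \overline G_m(z) = y + P_B(z - \overline G_m(z))$, and projecting off $B$ (using the basis constant $K_b$, or better, comparing norms via Lemma \ref{trun}-type truncation) recovers control of $\Vert y\Vert = \Vert x - P_A(x)\Vert$ from below — but we actually need to recover $\Vert x\Vert$, not $\Vert x - P_A(x)\Vert$, on the far left.

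That last point is the crux, and here is where the argument must be arranged so that the $A$-coordinates of $x$ survive. The fix is to not project $A$ away from $x$ at the start but instead to set things up symmetrically: take $z := x + t\,\mathbf 1_{\varepsilon A} + t'\,\mathbf 1_{\eta B}$ with $t' $ slightly larger than $t$ and both larger than $1$, so that $B$ is the greedy set of order $m=|B|$, run $w$-semi-greediness, bound $\sigma^w_{w(B)}(z)$ by replacing the $t'\mathbf 1_{\eta B}$ block by a block on a $w$-measure-$\le w(B)$ set (namely by $t\,\mathbf 1_{\varepsilon A}$ using $w(A)\le w(B)$, i.e. subtract $x + t\mathbf 1_{\varepsilon A}$'s... ) giving $\sigma^w_{w(B)}(z)\leq \Vert x + t'\mathbf 1_{\eta B}\Vert$ up to lower-order terms, and on the left side use that $\overline G_m(z)$ lives on $B$ plus a truncation/unconditionality-free estimate to get $\Vert x + t\mathbf 1_{\varepsilon A}\Vert \lesssim \Vert z - \overline G_m(z)\Vert + (\text{error on }B)$. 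Letting $t\to\infty$ and then comparing with $t\to 1$ recovers exactly the $w$-Property (A) inequality $\Vert x + t\mathbf 1_{\varepsilon A}\Vert \leq C\Vert x + t\mathbf 1_{\eta B}\Vert$; taking $t = \sup_j|e_j^*(x)|$ and invoking the equivalence (a)$\Leftrightarrow$(c) in Remark \ref{r:def_prop_A} then finishes the proof. The main obstacle is precisely managing the Chebyshev approximand: unlike the plain TGA, $\overline G_m(z)$ need not have a clean closed form, so I expect to need a two-sided estimate showing $\Vert z - \overline G_m(z)\Vert$ is comparable (with constants depending only on $K_b$ and $C_{sg}$) to $\Vert P_{B^c}(z)\Vert = \Vert x + t\mathbf 1_{\varepsilon A}\Vert$, together with a careful bookkeeping of the auxiliary additive errors that vanish in the appropriate limits.
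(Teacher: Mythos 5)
Your overall instinct -- apply $w$-semi-greediness to a vector with a block of large coefficients and bound $\sigma^w$ by subtracting a block supported on a set of smaller $w$-measure -- is the right one, but the specific arrangement you propose has a genuine gap, and it is exactly the one you flag at the end without resolving. If you take $z = x + t\mathbf{1}_{\varepsilon A} + t'\mathbf{1}_{\eta B}$ and use $B$ as the greedy set, the inequality $\Vert z-\overline G_m(z)\Vert \le \Vert P_{B^c}(z)\Vert = \Vert x+t\mathbf{1}_{\varepsilon A}\Vert$ is automatic (just test the Chebyshev minimum against $b_n = t'\eta_n$), but you need the \emph{reverse} estimate $\Vert x+t\mathbf{1}_{\varepsilon A}\Vert \lesssim \Vert z-\overline G_m(z)\Vert$, i.e.\ a uniformly bounded projection onto $B^c$. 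For a general Schauder basis and an arbitrary finite set $B$ interlaced with $\supp(x)\cup A$, no such projection exists (its norm can grow like $|B|$); only unconditionality would give it, and that is not a hypothesis here. So the "two-sided comparability with $\Vert P_{B^c}(z)\Vert$" you hope for is false in general, and the argument does not close.

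The paper circumvents this by never using $B$ itself as the greedy block. Instead (in the main case $\sum_n w_n=\infty$, $\sup_n w_n<\infty$, $w(B)>\limsup_n w_n$) it manufactures an auxiliary set $F=E\cup\{n_0\}$ lying entirely to the \emph{right} of $\supp(x)\cup A\cup B$ with $w(E)\le w(B)<w(F)<2w(B)$, so that stripping off the $F$-block costs only the basis constant $K_b$ via a partial-sum projection $S_N$. Semi-greediness is then applied three times -- to $x+\mathbf{1}_{\varepsilon A}+(1+\delta)\mathbf{1}_F$, to $\mathbf{1}_{\eta B}+(1+\delta)\mathbf{1}_F$, and to $x+(1+\delta)\mathbf{1}_{\eta B}+\mathbf{1}_E$ -- using $F$ (resp.\ $E$) as a pivot to chain $\Vert x+\mathbf{1}_{\varepsilon A}\Vert$ to $\Vert x+\mathbf{1}_{\eta B}\Vert$. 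Note also that building such an $F$ of prescribed $w$-measure far to the right requires the weight to be non-summable and $w(B)$ to exceed $\limsup_n w_n$; the remaining cases ($\sum_n w_n<\infty$, $\sup_n w_n=\infty$, or $w(B)\le\limsup_n w_n$) are handled separately via Proposition 3.5 of \cite{DKTW}, which forces $c_0$-behaviour. Your proposal contains no such case analysis, and without it even a corrected version of the pivot argument would be incomplete.
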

	
	\begin{proof}
		Assume that $\Vert x-\overline{G}_m(x)\Vert \leq C_{sg}\sigma_{w(A_m(x))}^w$ for any $x\in\mathbb N$ and $m\in\mathbb N$.
		
		We take $\varepsilon, \eta, A, B$ and $x$ in the conditions of the definition of the $w$-Property (A). In all of the following cases we consider $x\in\mathbb X$ such that $\vert\supp(x)\vert<\infty$ and $\sup_n\vert e_n^*(x)\vert\leq 1$. \newline
		
		\underline{\textbf{Case 1:}} $\sum_{n=1}^\infty w_n = \infty$ and $\sup_n w_n <\infty$.
		
		\textbf{Case 1.1:} $w(B)>\lim\sup_{n\rightarrow\infty} w_n$. Since $\sum_{n}w_n = \infty$, we can choose $E$ and $n_0\in\mathbb N$ with $\min E> \max (A\cup B\cup \supp(x))$ and $n_0>\max E$ such that $$w (E)\leq w(B)<w(E)+w_{n_0}<2w(B).$$
		Set $F:=E\cup\lbrace n_0\rbrace$. Then, $w (E)\leq w(B)<w (F)<2w(B)$.\newline
		
		We define the element $z:= x+\mathbf{1}_{\varepsilon A}+(1+\delta)\mathbf{1}_F$.
		For any scalar sequence $(f_n)_{n \in F}$, we have
		$\Vert x+\mathbf{1}_{\varepsilon A}\Vert \leq K_b\Vert x+\mathbf{1}_{\varepsilon A}+\sum_{n\in F}f_n e_n\Vert$.
		As the basis $\mathcal B$ is $w$-semi-greedy with constant $C_{sg}$, and $w(A)\leq w(B)<w(F)$,
		we conclude that
		$$\inf_{f_n} \Vert x+\mathbf{1}_{\varepsilon A}+\sum_{n\in F}f_n e_n\Vert
		\leq C_{sg}\sigma_{w (F)}^w(z)\leq C_{sg}\Vert x+(1+\delta)\mathbf{1}_F\Vert.$$
		Consequently, $\Vert x+\mathbf{1}_{\varepsilon A}\Vert \leq K_bC_{sg}\Vert x+(1+\delta)\mathbf{1}_F\Vert$.
		Taking $\delta\rightarrow 0$,
		\begin{eqnarray}\label{1}
		\Vert x+\mathbf{1}_{\varepsilon A}\Vert &\leq& K_b C_{sg}\Vert x+\mathbf{1}_F\Vert \leq K_bC_{sg}\Vert x+\mathbf{1}_E\Vert + K_bC_{sg}\Vert e_{n_0}\Vert\\\nonumber
		&\leq& K_bC_{sg}\Vert x+\mathbf{1}_E\Vert + K_bC_{sg} c_2 \leq K_bC_{sg}(\Vert x+\mathbf{1}_{\eta B}\Vert +\Vert \mathbf{1}_{\eta B}\Vert + \Vert \mathbf{1}_E\Vert)+K_bC_{sg} c_2.
		\end{eqnarray}
		
		Now, we set $y:= \mathbf{1}_{\eta B}+(1+\delta)\mathbf{1}_F$. Reasoning as before, we obtain
		$$\Vert \mathbf{1}_{\eta B}\Vert \leq K_b \inf_{c_n} \Vert \mathbf{1}_{\eta B}+\sum_{n\in F}c_ne_n\Vert
		\leq K_bC_{sg}\sigma_{w(F)}^w(y)\leq K_bC_{sg}\Vert (1+\delta)\mathbf{1}_F\Vert.$$
		Sending $\delta\rightarrow 0$, we obtain
		\begin{eqnarray}\label{2}
		\Vert \mathbf{1}_{\eta B}\Vert \leq K_bC_{sg}\Vert \mathbf{1}_F\Vert \leq K_bC_{sg}\Vert \mathbf{1}_E\Vert + K_bC_{sg}c_2.
		\end{eqnarray}
		
		On the other hand, taking $s:=x+(1+\delta)\mathbf{1}_{\eta B} +\mathbf{1}_E$,
		$$\Vert \mathbf{1}_E\Vert \leq (K_b+1)\Vert x+\sum_{n\in B}b_ne_n + \mathbf{1}_E\Vert \leq C_{sg}(K_b+1)\sigma_{w(B)}^w(s)\leq C_{sg}(K_b+1)\Vert x+(1+\delta)\mathbf{1}_{\eta B}\Vert.$$
		
		Then, taking $\delta\rightarrow 0$,
		\begin{eqnarray}\label{3}
		\Vert \mathbf{1}_E\Vert\leq C_{sg}(K_b+1)\Vert x+\mathbf{1}_{\eta B}\Vert.
		\end{eqnarray}
		
		Finally, using \eqref{1}, \eqref{2} and \eqref{3}, the basis satisfies the $w$-Property (A) with constant
 $K=O(C_{sg}^3 K_b^3 c_2)$.\newline
		
		\textbf{Case 1.2:} $w(A)\leq w(B)\leq \lim\sup_{n\rightarrow\infty}w_n$. Using Proposition 3.5 of \cite{DKTW}, $$\max\lbrace \Vert \mathbf{1}_{\varepsilon A}\Vert, \Vert \mathbf{1}_{\eta B}\Vert\rbrace\leq 2K_bC_{sg}c_2.$$ 
		Since $1=\vert e_j^*(x+\mathbf{1}_{\eta B})\vert \leq \Vert e_j^*\Vert \Vert x+\mathbf{1}_{\eta B}\Vert\leq c_2 \Vert x+\mathbf{1}_{\eta B}\Vert$ for $j\in B$, then
		\begin{equation} \begin{split}
		\Vert x+\mathbf{1}_{\varepsilon A}\Vert &\leq \Vert x+\mathbf{1}_{\eta B}\Vert + \Vert \mathbf{1}_{\eta B}\Vert + \Vert \mathbf{1}_{\varepsilon A}\Vert \\ &\leq \Vert x+\mathbf{1}_{\eta B}\Vert + 4 K_bC_{sg}c_2 \leq (4 K_bC_{sg}c_2^2+1)\Vert x+\mathbf{1}_{\eta B}\Vert.
		\end{split} \end{equation}
		
		\underline{\textbf{Case 2:}} If $\sum_n w_n<\infty$ or $\sup_n w_n =\infty$, using the Proposition 3.5 of \cite{DKTW}, $\mathcal B$ is equivalent to the canonical basis of $c_0$ and the result is trivial.
		
	\end{proof}

	\begin{prop}
		If $\mathcal B$ has the $C_a$-$w$-Property (A),
		then $\mathcal B$ is $2C_a$-$w$-superdemocratic. 
	\end{prop}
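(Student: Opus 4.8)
The plan is to establish the (formally stronger) inequality $\|\mathbf{1}_{\varepsilon A}\|\le 2C_a\|\mathbf{1}_{\eta B}\|$ for arbitrary finite sets $A,B$ with $w(A)\le w(B)$ and arbitrary sign sequences $\varepsilon=(\varepsilon_n)$, $\eta=(\eta_n)$, by separating inside $A\cap B$ the coordinates on which $\varepsilon$ and $\eta$ agree from those on which they differ. Set $D:=A\cap B$, $D^{+}:=\{n\in D:\varepsilon_n=\eta_n\}$ and $D^{-}:=\{n\in D:\varepsilon_n=-\eta_n\}$, so that $A=(A\setminus B)\cup D^{+}\cup D^{-}$ is a disjoint union. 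Since $\mathbf{1}_{\varepsilon D^{+}}=\mathbf{1}_{\eta D^{+}}$ and $\mathbf{1}_{\varepsilon D^{-}}=-\mathbf{1}_{\eta D^{-}}$, one has $\mathbf{1}_{\varepsilon A}=\bigl(\mathbf{1}_{\varepsilon(A\setminus B)}+\mathbf{1}_{\eta D^{+}}\bigr)-\mathbf{1}_{\eta D^{-}}$, hence by the triangle inequality $\|\mathbf{1}_{\varepsilon A}\|\le\|\mathbf{1}_{\varepsilon(A\setminus B)}+\mathbf{1}_{\eta D^{+}}\|+\|\mathbf{1}_{\eta D^{-}}\|$, and it suffices to bound each of the two terms by $C_a\|\mathbf{1}_{\eta B}\|$.

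For the first term I would apply the $w$-Property (A) (Definition \ref{d:w_prop_A}) to the vector $x:=\mathbf{1}_{\eta D^{+}}$, with ``small'' set $A\setminus B$, ``large'' set $B\setminus D^{+}$, sign patterns $\varepsilon$ on $A\setminus B$ and $\eta$ on $B\setminus D^{+}$, and $t=1\ge\sup_j|e_j^{*}(x)|$. The three sets $A\setminus B$, $B\setminus D^{+}$ and $\supp(x)=D^{+}$ are pairwise disjoint, and $w(A\setminus B)=w(A)-w(D^{+})-w(D^{-})\le w(B)-w(D^{+})=w(B\setminus D^{+})$ because $w(A)\le w(B)$; so Definition \ref{d:w_prop_A} gives $\|\mathbf{1}_{\eta D^{+}}+\mathbf{1}_{\varepsilon(A\setminus B)}\|\le C_a\|\mathbf{1}_{\eta D^{+}}+\mathbf{1}_{\eta(B\setminus D^{+})}\|=C_a\|\mathbf{1}_{\eta B}\|$. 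For the second term I would apply the $w$-Property (A) to $x:=\mathbf{1}_{\eta D^{-}}$ with ``small'' set $\emptyset$ and ``large'' set $B\setminus D^{-}$ (the weight condition $0\le w(B\setminus D^{-})$ being trivial, and $\supp(x)=D^{-}$ disjoint from $B\setminus D^{-}$), obtaining $\|\mathbf{1}_{\eta D^{-}}\|\le C_a\|\mathbf{1}_{\eta D^{-}}+\mathbf{1}_{\eta(B\setminus D^{-})}\|=C_a\|\mathbf{1}_{\eta B}\|$. Combining the two bounds yields $\|\mathbf{1}_{\varepsilon A}\|\le 2C_a\|\mathbf{1}_{\eta B}\|$, i.e. $C_s\le 2C_a$.

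The verifications of disjointness and of the weight inequalities for each of the two applications of Definition \ref{d:w_prop_A} are routine. The point that carries the actual content — and where the factor $2$ (rather than merely $C_a$) enters — is the realization that the only obstruction is the mismatch of $\varepsilon$ and $\eta$ on $A\cap B$: on the ``agreeing'' part $D^{+}$ the block $\mathbf{1}_{\varepsilon D^{+}}$ can be absorbed into a sub-block of $\mathbf{1}_{\eta B}$, while on the ``disagreeing'' part $D^{-}$ one has $\mathbf{1}_{\varepsilon D^{-}}=-\mathbf{1}_{\eta D^{-}}$, and a same-sign sub-block $\mathbf{1}_{\eta D^{-}}$ with $D^{-}\subseteq B$ is controlled by $\|\mathbf{1}_{\eta B}\|$ through the $w$-Property (A) with empty ``small'' set. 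In particular, when $\varepsilon$ and $\eta$ agree on $A\cap B$ (e.g. when $A\cap B=\emptyset$) one has $D^{-}=\emptyset$ and the argument produces the better constant $C_a$; no case distinction on the weight $w$ is required.
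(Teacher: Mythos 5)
Your proof is correct: all the disjointness and weight verifications check out (in particular $w(A\setminus B)\le w(B\setminus D^{+})$ follows from $w(A)\le w(B)$ as you say, and the application with empty ``small'' set is legitimate since $\emptyset\in\mathbb N^{<\infty}$ and $w(\emptyset)=0$). However, you take a genuinely different route from the paper. The paper first reduces to the case $\eta\equiv 1$ on $B$ by replacing the basis $(e_n)$ with $(\eta_n e_n)$ (which preserves the $w$-Property (A) constant), and then invokes a convexity lemma from \cite{DKO} asserting that $\mathbf{1}_{\varepsilon A}$ lies in $2S$, where $S$ is the symmetric convex hull of $\{\mathbf{1}_{A'}:A'\subset A\}$; this reduces the problem to the single unsigned estimate $\Vert\mathbf{1}_{A'}\Vert\le C_a\Vert\mathbf{1}_B\Vert$ via one application of the $w$-Property (A) with the decomposition $\mathbf{1}_{A'}=\mathbf{1}_{A'\setminus B}+\mathbf{1}_{A'\cap B}$, the factor $2$ coming from the convexity lemma. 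You instead handle the signs head-on by splitting $A\cap B$ into the agreeing part $D^{+}$ and the disagreeing part $D^{-}$, and apply the $w$-Property (A) twice, with the factor $2$ coming from the triangle inequality. Your argument is more self-contained (no external convexity lemma, no renormalization of the basis) and isolates exactly where the obstruction lies; the paper's approach is shorter on the page and reuses a standard tool. Both yield the constant $2C_a$, and your observation that the constant improves to $C_a$ when $\varepsilon$ and $\eta$ agree on $A\cap B$ is a small bonus not visible in the paper's argument.
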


%

	\begin{proof}
		Take $A,B\in\mathbb N^{<\infty}$ with $w(A)\leq w(B)$, and show that,
for any choice of signs,
$\Vert \mathbf{1}_{\eta A}\Vert \leq 2C_a \Vert \mathbf{1}_{\varepsilon B}\Vert$.
As in \cite[Subsection 4.4]{BBG}, it is enough to prove our inequality for
$\varepsilon \equiv 1$ (otherwise, replace $\mathcal B = \{e_n : n \in \mathbb{N}\}$
by $\lbrace \varepsilon_n e_n : n \in \mathbb{N}\rbrace$).
Since $\mathbf{1}_{\eta A}\in 2S$, where $S=\lbrace \sum_{A'\subset A}\theta_{A'}\mathbf{1}_{A'} : \sum_{A'\subset A}\vert \theta_{A'}\vert \leq 1\rbrace$ (see \cite[Lemma 6.4]{DKO}), it suffices to show that 
		$$\Vert \mathbf{1}_{A'}\Vert \leq C_a \Vert \mathbf{1}_B\Vert, \; \forall A'\subset A.$$
		
		Take $A'\subset A$. Obviously, $\mathbf{1}_{A'} = \mathbf{1}_{A'\setminus B} + \mathbf{1}_{B\cap A'}$. Then, using the $w$-Property (A),
		$$\Vert \mathbf{1}_{A'}\Vert = \Vert \mathbf{1}_{A'\setminus B} + \mathbf{1}_{B\cap A'}\Vert \leq C_a \Vert \mathbf{1}_{B\setminus A'} + \mathbf{1}_{B\cap A'}\Vert = C_a\Vert \mathbf{1}_B\Vert.$$
		We can apply the $w$-Property (A) because
		$$w(A')=w(A'\setminus B)+w(A'\cap B)\leq w(A)\leq w (B) =w(B\setminus A')+w(B\cap A')\Rightarrow w(A'\setminus B)\leq w(B\setminus A').$$
		This completes the proof.
	\end{proof}

	With these results, we have proved the implications \eqref{impli}. 
	
	\begin{rem}
		If $w\equiv 1$, we recover the classical definition of semi-greediness (resp.~superdemocracy), and we will say that $\mathcal B$ is semi-greedy (resp.~superdemocratic).
	\end{rem}

	Improving \cite[Proposition 4.5]{DKTW}, we prove that, in certain cases,
any $w$-superdemocratic basis has to contain a subsequence equivalent to
the canonical basis of $c_0$ ($c_0$-basis henceforth), or even to be equivalent to
such basis.

	
	\begin{prop}\label{p:find c0}
Suppose a basis $\mathcal{B} = (e_n)_{n=1}^\infty$ is $C_s$-$w$-superdemocratic. 
		\begin{enumerate}
			\item[i)] If $A\in\mathbb{N}^{<\infty} $ and $w(A)\leq \lim\sup_{n\rightarrow\infty} w_n$, then
			$\max_{\varepsilon\in\lbrace\pm 1\rbrace}\Vert \mathbf{1}_{\varepsilon A}\Vert \leq c_2C_s$.
			\item[ii)] If $\sup_n w_n = \infty$, then $\mathcal B$ is equivalent to the $c_0$-basis.
			\item[iii)] If $\inf_n w_n = 0$, then there exist $i_1 < i_2 < \ldots$
so that the sequence $(e_{i_k})_{k \in \N}$ is equivalent to the $c_0$-basis.
Moreover, if $\lim_n w_n = 0$, then for any infinite set $A \subset \N$ we can select
$i_1, i_2, \ldots \in A$ with the properties described above.
			\item[iv)] If $\sum_n w_n <\infty$, then $\mathcal B$ is equivalent to the $c_0$-basis.
		\end{enumerate}
	\end{prop}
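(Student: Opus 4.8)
The plan is to treat the four statements together, extracting from $C_s$-$w$-superdemocracy a uniform bound on the norms of sign-vectors $\mathbf{1}_{\varepsilon A}$ over all finite $A$ with $w$-measure bounded by a fixed threshold, and then to recognize such a uniform bound as the hallmark of an isomorphic copy of $c_0$. For part (i), the key observation is that if $w(A) \le \limsup_n w_n$, then there is a single index $j$ (in fact infinitely many, living arbitrarily far out) with $w_j \ge w(A)$; applying $w$-superdemocracy with $B = \{j\}$ gives $\Vert \mathbf{1}_{\varepsilon A}\Vert \le C_s \Vert \mathbf{1}_{\eta\{j\}}\Vert = C_s \Vert \pm e_j\Vert \le c_2 C_s$. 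One should be slightly careful that $w(A) \le \limsup_n w_n$ only guarantees $w_j \ge w(A) - \e$ for each $\e$; but since there are infinitely many such $j$ and the inequality $w(A) \le w(B)$ in the definition of superdemocracy is not strict, a limiting argument (or choosing $j$ with $w_j > w(A)$ when $w(A) < \limsup w_n$, and handling the boundary case $w(A) = \limsup w_n$ via a two-element $B$) closes the gap.

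For part (ii), if $\sup_n w_n = \infty$ then \emph{every} finite set $A$ satisfies $w(A) \le \sup_n w_n = \infty$... more precisely, for every finite $A$ there is a single index $j$ with $w_j \ge w(A)$, so part (i)'s argument applies with no restriction, yielding $\sup_{|A|<\infty,\ \varepsilon} \Vert \mathbf{1}_{\varepsilon A}\Vert \le c_2 C_s$. A uniformly bounded family of unconditional-looking sign-sums forces the $c_0$ estimate: for any scalars $(a_n)_{n\in F}$ one writes $\sum_{n\in F} a_n e_n$ as a convex-type combination / integral of sign vectors (or uses the standard fact, e.g.\ via \cite[Lemma 6.4]{DKO} as already invoked in the excerpt, that $\Vert\sum a_n e_n\Vert \lesssim \max_n|a_n|\cdot \sup_{\varepsilon,A}\Vert\mathbf{1}_{\varepsilon A}\Vert$), giving the upper $c_0$-bound; the lower bound $\max_n |a_n| \le c_2 \Vert \sum a_n e_n\Vert$ is automatic from (A1)--(A2). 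Hence $(e_n)$ is equivalent to the $c_0$-basis. Part (iv) is the analogous statement: if $\sum_n w_n < \infty$ then $w(A) \le \sum_n w_n$ for \emph{every} finite $A$, and also $w(A) \le \limsup_n w_n$ fails in general, so instead one uses $B$ equal to a long tail block with $w(B) \ge w(A)$ — possible precisely because $w(\{n : n \ge N\})$... no: with $\sum w_n <\infty$ the tails are \emph{small}. The correct route here: take $B$ to be an initial-type large set, i.e.\ since $w(\mathbb{N}) = \sum_n w_n =: W < \infty$, every finite $A$ has $w(A) \le W$, and one can find a finite $B_0$ with $w(B_0)$ as close to $W$ as desired, hence $\ge w(A)$ once $A$ avoids finitely many coordinates; combined with (i)-style arguments and a splitting of a general $A$ into a bounded number of blocks this again gives $\sup_{|A|<\infty,\varepsilon}\Vert \mathbf{1}_{\varepsilon A}\Vert < \infty$, and the $c_0$ conclusion follows as before.

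For part (iii), when $\inf_n w_n = 0$ one cannot bound $\Vert \mathbf{1}_{\varepsilon A}\Vert$ for \emph{all} finite $A$, but one can for sets built from coordinates with tiny weight: pick $i_1 < i_2 < \cdots$ with $w_{i_k} \le 2^{-k} w_{j_0}$ for some fixed reference index $j_0$, so that any finite $A \subset \{i_k\}$ has $w(A) \le w_{j_0}$, hence $\Vert \mathbf{1}_{\varepsilon A}\Vert \le c_2 C_s$ by the $B=\{j_0\}$ argument; this uniform bound over all finite subsets of $\{i_k : k\in\mathbb{N}\}$ and all sign choices yields, exactly as above, that $(e_{i_k})$ is equivalent to the $c_0$-basis. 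If moreover $\lim_n w_n = 0$, then inside any prescribed infinite $A \subset \N$ the weights still tend to $0$, so the same selection can be performed within $A$.

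The main obstacle is purely bookkeeping at the boundary: matching the non-strict inequality $w(A) \le w(B)$ in the definition of $w$-superdemocracy against suprema/limsups that may not be attained, and, in parts (ii) and (iv), passing from ``every finite $A$ has its sign-sums uniformly bounded'' to the genuine two-sided $c_0$-equivalence — which requires the observation (already used in the excerpt via \cite[Lemma 6.4]{DKO}) that an arbitrary vector's norm is controlled by $\max$ of its coefficients times the supremal sign-sum norm. Once that lemma is in hand, every part reduces to producing the right comparison set $B$, which parts (i)--(iv) do by, respectively, a single heavy coordinate, an arbitrarily heavy coordinate, finitely many light coordinates, and a near-full tail block.
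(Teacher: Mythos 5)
Your proposal is correct and follows essentially the same route as the paper: in each part one manufactures a comparison set $B$ with $w(B)\ge w(A)$ and a uniformly bounded sign-sum norm, and then passes to the $c_0$-equivalence by convexity. Two small differences are worth noting: your choice in (iii) of $w_{i_k}\le 2^{-k}w_{j_0}$ lets you compare against a single coordinate (the paper instead takes any summable subsequence and compares against an initial segment $\{1,\dots,N\}$), and your attention to the boundary case $w(A)=\limsup_n w_n$ in (i) actually repairs a small gap in the paper's one-line argument (``find $n$ with $w_n>w(A)$'' can fail when the $\limsup$ is attained by $w(A)$ but not by any $w_n$), at the cost of weakening the constant there to $2c_2C_s$.
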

	
	\begin{proof}
i) 
Find $n \in \N \backslash A$ so that $w_n > w(A)$, then
$\|{\mathbf 1}_{\varepsilon A}\| \leq C_s \|1_{\{n\}}\| \leq c_2 C_s$.



ii) By (i), $\Vert \mathbf{1}_{\varepsilon A}\Vert \leq c_2C_s$ for all choices of signs, which yields the desired equivalence.

iii) Suppose $\inf_n w_n = 0$, and find $i_1 < i_2 < \ldots$ so that $\sum_k w_{i_k} < \infty$.
By convexity, it suffices the existence of a constant $K$ with the property that
the inequality $\|{\mathbf 1}_{\varepsilon E}\| \leq K$ holds
for any finite set $E \subset \{i_1, i_2, \ldots\}$.
To this end, find $N \in \N$ so that $\sum_{j=1}^N w_j \geq \sum_k w_{i_k} < \infty$.
Let $B = \{1, \ldots, N\} \backslash \{i_1, i_2, \ldots\}$,
$D = \{1, \ldots, N\} \cap \{i_1, i_2, \ldots\}$, and
$A = E \backslash D$. Note that $|B|, |D| \leq N$, hence, for every
$\varepsilon \in \{-1,1\}^\N$, $\Vert \mathbf{1}_{\varepsilon B}\Vert ,
 \Vert \mathbf{1}_{\varepsilon D}\Vert\leq c_2 N$.
Then $w(A) \leq w(B)$ and hence
$\Vert \mathbf{1}_{\varepsilon A}\Vert \leq C_s \Vert \mathbf{1}_{\varepsilon B}\Vert \leq c_2 N C_s$.
By the triangle inequality,
$$
\Vert \mathbf{1}_{\varepsilon E}\Vert \leq \Vert \mathbf{1}_{\varepsilon A}\Vert +
\Vert \mathbf{1}_{\varepsilon D}\Vert \leq c_2 N (C_s+1) .
$$
If $\lim_n w_n = 0$, then every infinite $A \subset \N$ contains $i_1 < i_2 < \ldots$
with $\sum_k w_{i_k} < \infty$. It remains to invoke the preceding result.

iv) The proof proceeds as in (iii).
	\end{proof}
	
	From this we immediately obtain:

	\begin{cor}
		If the weight $w$ is unbounded, then a basis has the $w$-Property (A) if and only if it is equivalent
		to the canonical basis of $c_0$.
	\end{cor}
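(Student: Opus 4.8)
The plan is to deduce both implications from results already in the excerpt. For the forward direction, suppose $\mathcal B$ has the $w$-Property (A) with $w$ unbounded. By the Proposition immediately preceding (``If $\mathcal B$ has the $C_a$-$w$-Property (A), then $\mathcal B$ is $2C_a$-$w$-superdemocratic''), $\mathcal B$ is $w$-superdemocratic. Since $\sup_n w_n = \infty$, part (ii) of Proposition \ref{p:find c0} applies directly and gives that $\mathcal B$ is equivalent to the $c_0$-basis. This is the whole content of one direction; no extra work is needed.

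For the converse, suppose $\mathcal B$ is equivalent to the canonical basis of $c_0$; I must check it has the $w$-Property (A) for every weight $w$ (in particular the given unbounded one). Here I would argue directly from the reformulation in Proposition \ref{p:prop_A_projections} (or Remark \ref{r:def_prop_A}(b)): it suffices to show $\Vert x\Vert \leq C\Vert x - P_A(x) + \mathbf 1_{\eta B}\Vert$ whenever $\sup_j|e_j^*(x)|\le 1$, $w(A)\le w(B)$, $B\cap\supp(x)=\emptyset$. Let $\lambda\ge 1$ be the equivalence constant, so that for any finitely supported $z=\sum a_n e_n$ one has $\lambda^{-1}\max_n|a_n| \le \Vert z\Vert \le \lambda \max_n|a_n|$. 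Then $\Vert x\Vert \le \lambda\max_j|e_j^*(x)| \le \lambda$, while $\Vert x - P_A(x) + \mathbf 1_{\eta B}\Vert \ge \lambda^{-1}\max(\text{coefficients}) \ge \lambda^{-1}$ because the coefficient on each $n\in B$ is $\eta_n\in\{\pm1\}$ and $B$ is disjoint from $\supp(x-P_A(x))$. Hence $\Vert x\Vert \le \lambda^2 \Vert x - P_A(x) + \mathbf 1_{\eta B}\Vert$, which is precisely the reformulated $w$-Property (A) with constant $\lambda^2$, independent of $A, B, w$. (Note that the condition $w(A)\le w(B)$ is not even used in this direction — a $c_0$-basis has the $w$-Property (A) for all weights.)

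The only mild subtlety — and the closest thing to an obstacle — is making sure the reduction to finitely supported $x$ and the use of the coefficient-wise estimate are legitimate; but this is exactly what Lemma \ref{l:appr_fun_supp} and Proposition \ref{p:prop_A_projections} were set up to provide, so it is routine. One should also remark that the equivalence ``$c_0$-basis $\iff$ $w$-superdemocratic'' for unbounded $w$ is already implicit in Proposition \ref{p:find c0}(ii) together with the trivial fact that the $c_0$-basis is superdemocratic for every weight, so the Corollary can equivalently be phrased as: for unbounded $w$, the three properties $w$-Property (A), $w$-superdemocracy, and equivalence to $c_0$ all coincide.
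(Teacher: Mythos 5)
Your argument is correct and follows essentially the same route as the paper: the forward direction is the chain $w$-Property (A) $\Rightarrow$ $w$-superdemocracy $\Rightarrow$ equivalence to the $c_0$-basis via Proposition \ref{p:find c0}(ii), and the converse is the routine verification (via Proposition \ref{p:prop_A_projections}) that a basis equivalent to the canonical $c_0$-basis has the $w$-Property (A) for every weight. The paper treats this as an immediate consequence of the same two facts, so there is nothing to add.
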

	

	\section{Properties (C) and (D)}\label{s:properties_C_and_D}
	
	 Properties (C) and (D) (discussed below) naturally arise in the study of quasi-greedy bases.

	\begin{defn}\label{d:prop_C}
		We say that $\mathcal B$ satisfies the \textbf{Property (C)} if for any $x\in\mathbb X$, there exists a positive constant $C$ such that
		\begin{eqnarray}\label{cC}
		\min_{j\in \Lambda}\vert e_j^*(x)\vert \Vert \mathbf{1}_{\varepsilon \Lambda}\Vert \leq C\Vert x\Vert,
		\end{eqnarray}
		for any greedy set $\Lambda$ of $x$ and $\varepsilon\in\lbrace\pm 1\rbrace$.
		We denote by $C_u$ the least constant that satisfies \eqref{cC} and we say that $\mathcal B$ has the Property (C) with constant $C_u$.
	\end{defn}
	
	
	It is well known any quasy-greedy basis has Property (C) (see \cite[Lemma 2.3]{BBG}).
	Generalizing \cite[Lemma 2.2]{BBG}, we prove that any $w$-superdemocratic basis with
	the Property (C) has the $w$-Property (A).
	
	\begin{prop}\label{propCS}
		If $\mathcal B$ is $C_s$-$w$-superdemocratic and satisfies the Property (C) with constant $C_u$,
		then $\mathcal B$ has the $C_a$-$w$-Property (A) with $C_a\leq 3C_uC_s$.
	\end{prop}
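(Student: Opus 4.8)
The plan is to verify the projection form of the $w$-Property (A) given by Proposition~\ref{p:prop_A_projections}: namely, to show that $\Vert x\Vert \leq 3C_uC_s\Vert x - P_A(x) + \mathbf{1}_{\eta B}\Vert$ for every finitely supported $x$ with $\sup_j|e_j^*(x)| \leq 1$, every pair $A,B \in \mathbb{N}^{<\infty}$ with $w(A)\leq w(B)$ and $B\cap\supp(x)=\emptyset$, and every $\eta$. By Lemma~\ref{l:appr_fun_supp} we may assume $x$ is finitely supported. Write $y := x - P_A(x) + \mathbf{1}_{\eta B}$, which is the vector we wish to bound $x$ against. Split $x = P_{A^c}(x) + P_A(x)$; since $P_{A^c}(x) = y - \mathbf{1}_{\eta B}$, it suffices to estimate $\Vert P_A(x)\Vert$ and $\Vert \mathbf{1}_{\eta B}\Vert$ separately in terms of $\Vert y\Vert$, absorbing $\Vert P_{A^c}(x)\Vert \leq \Vert y\Vert + \Vert \mathbf{1}_{\eta B}\Vert$.

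First I would bound $\Vert \mathbf{1}_{\eta B}\Vert$. Here $\mathbf{1}_{\eta B}$ is a greedy set of $y$ of the appropriate type: since $B\cap\supp(x)=\emptyset$, the coefficients of $y$ on $B$ are exactly $\pm 1$, and on $A^c\setminus B$ (i.e.\ where $y$ agrees with $x$) they have modulus $\leq 1$, while on $A$ they are $0$. So $B$ is a greedy set for $y$ with $\min_{j\in B}|e_j^*(y)| = 1$. Applying Property (C) to $y$ and the greedy set $B$ gives $\Vert \mathbf{1}_{\eta B}\Vert \leq C_u\Vert y\Vert$. Next I would bound $\Vert P_A(x)\Vert$. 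The key move is that since $|e_j^*(x)|\leq 1$ for all $j$, the vector $\mathbf{1}_A$ dominates $P_A(x)$ up to a convex-combination / averaging argument over sign choices restricted to $A$ — more precisely, $P_A(x)$ lies in a suitable scaled convex hull of the vectors $\mathbf{1}_{\varepsilon A}$, so $\Vert P_A(x)\Vert \leq \max_\varepsilon \Vert \mathbf{1}_{\varepsilon A}\Vert$; alternatively one argues directly with the partial-sum/unconditionality structure implicit in Property (C). Then $w(A)\leq w(B)$ together with $w$-superdemocracy gives $\Vert \mathbf{1}_{\varepsilon A}\Vert \leq C_s\Vert \mathbf{1}_{\eta B}\Vert \leq C_s C_u\Vert y\Vert$.

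Combining, $\Vert x\Vert \leq \Vert P_{A^c}(x)\Vert + \Vert P_A(x)\Vert \leq (\Vert y\Vert + \Vert \mathbf{1}_{\eta B}\Vert) + C_sC_u\Vert y\Vert \leq (1 + C_u + C_sC_u)\Vert y\Vert \leq 3C_uC_s\Vert y\Vert$, using $C_u, C_s \geq 1$. This is exactly condition (b) of Remark~\ref{r:def_prop_A} (equivalently \eqref{A}), so by Proposition~\ref{p:prop_A_projections} the basis has the $C_a$-$w$-Property (A) with $C_a \leq 3C_uC_s$, as claimed.

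The main obstacle I anticipate is the step bounding $\Vert P_A(x)\Vert$ by $\max_\varepsilon\Vert \mathbf{1}_{\varepsilon A}\Vert$: one must be careful that Property (C) only controls greedy sets, and $A$ need not be a greedy set of $x$. This is handled by applying Property (C) not to $x$ directly but to an auxiliary vector (as in the unconditionality argument of Theorem~\ref{1wgreedy}, replace the coefficients on $A$ by large equal values $\alpha + e_n^*(x)$ so that $A$ becomes greedy), or by the convex-hull representation of $P_A(x)$ in terms of $\{\mathbf{1}_{\varepsilon A}\}_\varepsilon$ valid because $\sup_j|e_j^*(x)|\leq 1$; either route closes the gap and the constant bookkeeping then yields the stated $3C_uC_s$.
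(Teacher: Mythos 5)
Your proof is correct and follows essentially the same route as the paper: a triangle inequality, $w$-superdemocracy to control the $A$-part by $\Vert \mathbf{1}_{\eta B}\Vert$, and Property (C) applied to the vector $x+\mathbf{1}_{\eta B}$ (for which $B$ is a greedy set with $\min_{j\in B}|e_j^*|=1$) to get $\Vert \mathbf{1}_{\eta B}\Vert \leq C_u\Vert x+\mathbf{1}_{\eta B}\Vert$. The only difference is that you verify the projection reformulation \eqref{A} directly, which costs one extra (and valid) convexity step bounding $\Vert P_A(x)\Vert$ by $\max_\varepsilon\Vert \mathbf{1}_{\varepsilon A}\Vert$, whereas the paper verifies the original sign form of the $w$-Property (A), where $\supp(x)$ is disjoint from $A\cup B$ and no such step is needed; both yield the same constant $3C_uC_s$.
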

	
	\begin{proof}
		Take $x, A, B, \varepsilon, \eta$ as in the definition of the $w$-Property (A) and assume that $\sup_j\vert e_j^*(x)\vert \leq 1$. Then,
		\begin{eqnarray}\label{n1}
		\Vert x+\mathbf{1}_{\varepsilon A}\Vert \leq \Vert x+\mathbf{1}_{\eta B}\Vert + \Vert \mathbf{1}_{\eta B}\Vert + \Vert \mathbf{1}_{\varepsilon A}\Vert.
		\end{eqnarray}
		
		Using the $w$-superdemocracy and $w(A)\leq w(B)$, we obtain that $\Vert \mathbf{1}_{\varepsilon A}\Vert \leq C_s\Vert \mathbf{1}_{\eta B}\Vert$. Now, we only have to estimate $\Vert \mathbf{1}_{\eta B}\Vert$. For that, we consider the element $y:=x+\mathbf{1}_{\eta B}$. It's clear that $\mathbf{1}_{\eta B}$ is a greedy sum for $y$, so
		\begin{eqnarray}\label{n2}
		\min_{j\in B}\vert e_j^*(y)\vert \Vert \mathbf{1}_{\eta B}\Vert = \Vert \mathbf{1}_{\eta B}\Vert \leq C_u\Vert y\Vert = C_u\Vert x+\mathbf{1}_{\eta B}\Vert.
		\end{eqnarray}
		
		Then, using \eqref{n1} and \eqref{n2},
		$$\Vert x+\mathbf{1}_{\varepsilon A}\Vert \leq \Vert x+\mathbf{1}_{\eta B}\Vert + 2C_sC_u\Vert x+\mathbf{1}_{\eta B}\Vert \leq 3C_sC_u\Vert x+\mathbf{1}_{\eta B}\Vert.$$
		Hence, the basis has the $w$-Property (A) with constant $C_a\leq 3C_uC_s$.
	\end{proof}
	
	\begin{ex}
		We next revisit a ``pathological'' basis constructed in Section 5.5 of \cite{BBG}
		(using some ideas from \cite[Example 4.8]{DKK}): a basis which has the Property (A),
		but fails to be quasi-greedy. The initial proof of the Property (A) was unwieldy.
		Here we present a streamlined proof that the basis has the Property (C), and then
		invoke Proposition \ref{propCS}.
		
		
		First recall the construction: $\mathcal D_k$ denote the set of all dyadic intervals $I\subset [0,1]$
		with length $\vert I\vert = 2^{-k}$, and consider $\mathcal D = \cup_{k\geq 0} \mathcal D_k$. Now, we consider the space $\mathfrak f_1^q$ of all real sequences $\textbf a = (a_I)_{I\in\mathcal D}$ such that 
		$$\Vert \textbf a\Vert_{\mathfrak f_1^q} = \left\Vert \left(\sum_I \vert a_I \chi_I^{(1)}\vert^q\right)^{1/q}\right\Vert_{L^1}<\infty,$$
		where $\chi_I^{(1)} = \vert I\vert^{-1}\chi_I$. 
		By \cite{GHO}, the canonical basis $\lbrace e_I\rbrace_{I\in\mathcal D}$ is unconditional and democratic.
		
		For every $N\geq 1$, we shall pick a subset $\lbrace k_1,...,k_N\rbrace\subset \mathbb N_0$ and look at the finite dimensional space $F_N$ consisting of sequences supported in $\cup_{j=1}^N \mathcal D_{k_j}$. We order the canonical basis by $\cup_{j=1}^N \lbrace e_I\rbrace_{I\in\mathcal D_{k_j}}$, so we may as well write their elements as $\textbf a = (a_j)_{j=1}^{\text{dim}\, F_N}$. We also consider in $F_N$ the James norm
		$$\Vert (a_j)\Vert_{J_q}=\sup_{m_0 = 0<m_1<\ldots}\left(\sum_{k\geq 0}\left\vert \sum_{m_k<j\leq m_{k+1}}a_j\right\vert^q\right)^{1/q}.$$
		Now, set in $F_N$ a new norm
		$$\Vert \textbf a\Vert =\max\lbrace \Vert \textbf a\Vert_{\mathfrak f_1^q}, \Vert \textbf a\Vert_{J_q}\rbrace.$$
		Finally, we consider the Banach space $\mathbb X = \oplus_{\ell^1} F_N$ with $\mathcal B$ the consecutive union of the natural bases in $F_N$. 
		
		It's possible to show that $\mathcal B$ is superdemocratic and $\Vert \mathbf{1}_{\varepsilon A}\Vert \approx \vert A\vert \approx \Vert \mathbf{1}_{\varepsilon A}\Vert_{\mathfrak f_1^q}$. To show that $\mathcal B$ satisfies the Property (C), we use that the canonical basis in $\mathfrak f_1^q$ is unconditional: take $\textbf a\in\mathbb X$ and $\Lambda$ a greedy set of $\textbf{a}$, then
		$$\min_{n\in \Lambda}\vert a_n\vert \Vert \mathbf{1}_{\varepsilon \Lambda}\Vert \lesssim \min_{n\in \Lambda}\vert a_n\vert \Vert \mathbf{1}_{\varepsilon \Lambda}\Vert_{\mathfrak f_1^q}\lesssim \Vert \textbf a\Vert_{\mathfrak f_1^q}\leq \Vert \textbf a\Vert.$$
		Hence, the basis satisfies the Property (C). Also, since the basis is superdemocratic, using the Proposition \ref{propCS}, the basis satisfies the Property (A).
	\end{ex}
	
	\begin{defn}
		We say that $\mathcal B$ is \textbf{bidemocratic} if there exists a constant $C\geq 1$ such that
		$$\Vert \mathbf{1}_{\varepsilon A}\Vert \Vert \mathbf{1}_{\eta A}^*\Vert_*\leq C\vert A\vert,\; \forall\; \text{finite}\; A, \forall \varepsilon,\eta\in\lbrace\pm 1\rbrace.$$
Here, $\| \cdot \|_*$ is the norm of ${\mathbb{X}}^*$, and $\mathbf{1}_{\eta A}^* = \sum_{i\in A} \eta_i e_i^*$. 
	\end{defn}
	
	\begin{lem}
		If $\mathcal B$ is bidemocratic, then $\mathcal B$ satisfies the Property (C).
	\end{lem}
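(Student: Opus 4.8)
The plan is to use a duality argument: a greedy set $\Lambda$ of $x$ furnishes, via the signs of the coefficients $e_j^*(x)$, a functional that detects the greedy coefficients, and bidemocracy is exactly what is needed to control the norm of this functional in terms of $\vert\Lambda\vert/\Vert\mathbf{1}_{\varepsilon\Lambda}\Vert$.

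First I would fix $x\in\mathbb X$, a greedy set $\Lambda$ of $x$ with $\vert\Lambda\vert=m$, and a sign vector $\varepsilon\in\lbrace\pm1\rbrace$. Writing $t=\min_{j\in\Lambda}\vert e_j^*(x)\vert$, I observe that if $t=0$ the inequality in Definition \ref{d:prop_C} is trivial, so I may assume $t>0$; then $\Lambda\subseteq\supp(x)$ and I can set $\eta_j=\sgn(e_j^*(x))$ for $j\in\Lambda$ and consider $\phi=\mathbf{1}_{\eta\Lambda}^*=\sum_{j\in\Lambda}\eta_j e_j^*\in\mathbb X^*$.

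Next I would estimate $\phi(x)$ from below and $\Vert\phi\Vert_*$ from above. On one hand, $\phi(x)=\sum_{j\in\Lambda}\vert e_j^*(x)\vert\geq mt$, so
$$mt\leq\phi(x)\leq\Vert\mathbf{1}_{\eta\Lambda}^*\Vert_*\,\Vert x\Vert.$$
On the other hand, applying bidemocracy to the set $\Lambda$ with the two sign patterns $\varepsilon$ and $\eta$ gives $\Vert\mathbf{1}_{\varepsilon\Lambda}\Vert\,\Vert\mathbf{1}_{\eta\Lambda}^*\Vert_*\leq Cm$, that is, $\Vert\mathbf{1}_{\eta\Lambda}^*\Vert_*\leq Cm/\Vert\mathbf{1}_{\varepsilon\Lambda}\Vert$. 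Combining the two displays and cancelling $m$ yields $t\,\Vert\mathbf{1}_{\varepsilon\Lambda}\Vert\leq C\Vert x\Vert$, which is precisely Property (C) with $C_u\leq C$, the bidemocratic constant.

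I do not expect a genuine obstacle in this argument; the only point requiring a moment's care is the degenerate case in which a greedy set picks up coordinates on which $x$ vanishes, and that case is dispatched immediately as above.
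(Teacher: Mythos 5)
Your argument is correct and is essentially the paper's own proof: both use the functional $\mathbf{1}_{\eta\Lambda}^*$ with $\eta$ the signs of the coefficients, bound $\min_{j\in\Lambda}\vert e_j^*(x)\vert\cdot\vert\Lambda\vert$ by $\mathbf{1}_{\eta\Lambda}^*(x)\leq\Vert\mathbf{1}_{\eta\Lambda}^*\Vert_*\Vert x\Vert$, and invoke bidemocracy to trade $\Vert\mathbf{1}_{\eta\Lambda}^*\Vert_*$ for $\vert\Lambda\vert/\Vert\mathbf{1}_{\varepsilon\Lambda}\Vert$. The only cosmetic difference is that the paper runs the argument for an arbitrary $A\subset\supp(x)$ (so it proves something slightly stronger than Property (C)), while you restrict to greedy sets and dispose of the $t=0$ case explicitly.
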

	\begin{proof}
		Here, we prove a stronger condition than Property (C). Take $x\in\mathbb X$ and $A\subset \supp(x)$. Then, taking $\eta = 1/\sgn\lbrace e_i^*(x)\rbrace$,
		\begin{eqnarray*}
			\min_{j\in A}\vert e_j^*(x)\vert \Vert \mathbf{1}_{\varepsilon A}\Vert \lesssim \min_{j\in A}\vert e_j^*(x)\vert \dfrac{\vert A\vert}{\Vert \mathbf{1}_{\eta A}^*\Vert_*}\leq \dfrac{\sum_{j\in A}\vert e_j^*(x)\vert}{\Vert \mathbf{1}_{\eta A}^*\Vert_*} \leq \dfrac{\mathbf{1}_{\eta A}^*(x)}{\Vert \mathbf{1}_{\eta A}^*\Vert_*}\leq \Vert x\Vert.
		\end{eqnarray*}
	\end{proof}
	
	\begin{cor}
All bidemocratic bases satisfy the ``classical'' Property (A).
	\end{cor}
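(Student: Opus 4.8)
The plan is to derive the corollary from the lemma immediately above together with Proposition \ref{propCS}, specialized to the constant weight $w\equiv 1$ (for which $w$-superdemocracy is ordinary superdemocracy and the $w$-Property (A) is the classical Property (A)). The preceding lemma already supplies one of the two hypotheses of Proposition \ref{propCS}: a bidemocratic basis has the Property (C), with constant $C_u$ controlled by the bidemocratic constant. Thus the only point left to settle is that a bidemocratic basis is superdemocratic; granting this, Proposition \ref{propCS} with $w\equiv 1$ produces the classical Property (A) with $C_a\le 3C_uC_s$, and the proof is complete.

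For the superdemocracy I would invoke the well-known fact that bidemocratic bases are (super)democratic; see \cite{DKKT}. For completeness the argument can be recalled in a sentence: pairing $\mathbf{1}_{\varepsilon A}$ with $\mathbf{1}_{\varepsilon A}^*$ gives $\Vert \mathbf{1}_{\varepsilon A}\Vert\,\Vert \mathbf{1}_{\varepsilon A}^*\Vert_* \ge \mathbf{1}_{\varepsilon A}^*(\mathbf{1}_{\varepsilon A}) = \vert A\vert$, while bidemocracy gives the reverse inequality up to the bidemocratic constant; hence $\Vert \mathbf{1}_{\varepsilon A}\Vert$ is comparable to $\vert A\vert/\Vert \mathbf{1}_{\varepsilon A}^*\Vert_*$ for every finite $A$ and every choice of signs. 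A short duality computation of the same flavour (also using $c_2$ to bound individual $\Vert e_n\Vert$ and $\Vert e_n^*\Vert$) then yields, for $\vert A\vert\le \vert B\vert$ and arbitrary signs $\varepsilon,\eta$, an estimate $\Vert \mathbf{1}_{\varepsilon A}\Vert \le C_s\Vert \mathbf{1}_{\eta B}\Vert$ with $C_s$ depending only on the bidemocratic constant and on $c_2$. It is precisely the sign-dependent formulation of bidemocracy in the definition above that makes this sign-dependent superdemocracy come out, which is exactly what Proposition \ref{propCS} requires.

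The step that is not essentially immediate is this last one, the superdemocracy of a bidemocratic basis; but it is a documented classical fact rather than a genuine obstacle, and in any case the substance of the corollary is carried by the preceding lemma (bidemocratic $\Rightarrow$ Property (C)) and by Proposition \ref{propCS}. One could alternatively bypass the explicit appeal to Proposition \ref{propCS} and redo its short proof inline: bound $\Vert x+t\mathbf{1}_{\varepsilon A}\Vert$ by $\Vert x\Vert + t\Vert \mathbf{1}_{\varepsilon A}\Vert$, control $\Vert x\Vert$ by $\Vert x+t\mathbf{1}_{\eta B}\Vert + t\Vert \mathbf{1}_{\eta B}\Vert$ and then $t\Vert \mathbf{1}_{\eta B}\Vert$ by Property (C) applied to $y=x+t\mathbf{1}_{\eta B}$ (for which $B$ is a greedy set), and control $t\Vert \mathbf{1}_{\varepsilon A}\Vert$ by superdemocracy; but routing it through Proposition \ref{propCS} is cleaner.
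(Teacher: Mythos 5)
Your proposal is correct and follows exactly the paper's route: the paper likewise observes that bidemocratic implies superdemocratic, and then combines the preceding lemma (bidemocratic $\Rightarrow$ Property (C)) with Proposition \ref{propCS}. Your added duality justification of the superdemocracy step is sound and merely fills in a fact the paper states without proof.
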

	
	\begin{proof}
		If a basis $\mathcal{B}$ is bidemocratic, then it is superdemocratic.
Now combine the preceding lemma with Proposition \ref{propCS}.
	\end{proof}

Relaxing the assumptions of Definition \ref{d:prop_C}, we consider:
	
 	\begin{defn}\label{d:prop_D}
		We say that $\mathcal B$ satisfies the \textbf{Property (D)} if there exists a positive constant $C$ such that
		$$\min_{n\in A}\vert a_n\vert \Vert \mathbf{1}_A\Vert \leq C\Vert \sum_{n\in A}a_n e_n\Vert,$$
		for any finite set $A$ and scalars $(a_n)_{n\in A}$.
	\end{defn}
	It's clear that if $\mathcal B$ satisfies the Property (C), then $\mathcal B$ satisfies the Property (D) as well. 
	
	\begin{ex}\label{ex:no_D}[Example of superdemocratic basis in a Banach space without the Property (D)]
		Let $\mathbb X= \ell_1 \oplus c_0$ and $\Vert (x,y)\Vert = \Vert x\Vert_{\ell_1}+\Vert y\Vert_\infty$. Let $(e_n)_n$ be the canonical basis in $\ell_1$ and $(f_m)_m$ the canonical basis in $c_0$. We define
		
		$$E_{2n-1}=\left(\frac{1}{2}e_n,\frac{-1}{2}f_n\right),\;\; E_{2n}=\left(\frac{1}{4}e_n, \frac{3}{4}f_n\right), n=1,2,...,$$
		
		and consider $\mathcal B = \lbrace E_n\rbrace_n=\lbrace E_{2n-1},E_{2n}\rbrace_n$. This basis is normalized.
		
		To prove that this basis is superdemocratic, we show the following proposition:
		\begin{prop}
			$D(m)\approx d(m)\approx m$, where $D(m):=\sup\lbrace \Vert \mathbf{1}_{\varepsilon A}\Vert: \vert A\vert\leq m, \varepsilon\in\lbrace\pm 1\rbrace\rbrace$ and $d(m):=\inf\lbrace \Vert \mathbf{1}_{\varepsilon A}\Vert: \vert A\vert\geq m, \varepsilon\in\lbrace\pm 1\rbrace\rbrace$.	
		\end{prop}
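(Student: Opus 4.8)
The plan is to estimate $D(m)$ from above and $d(m)$ from below, since the trivial bounds $d(m) \leq \|{\mathbf 1}_{\varepsilon A}\| \leq D(m)$ for $|A| = m$ together with $D(m) \lesssim m$ and $d(m) \gtrsim m$ give $D(m) \approx d(m) \approx m$, and superdemocracy follows because for $w \equiv 1$ the condition $\|{\mathbf 1}_{\varepsilon A}\| \leq C \|{\mathbf 1}_{\eta B}\|$ with $|A| \leq |B|$ reduces to $D(m) \leq C\, d(m)$ for $m = |A| \leq |B|$.

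**First I would** fix a finite set $A$ and signs $\varepsilon$ and split $A$ into its ``odd'' part $A_o$ (indices of the form $2n-1$) and ``even'' part $A_e$ (indices of the form $2n$); write $n(A)$ for the set of underlying indices $n$ appearing. Using the formula for $E_{2n-1}, E_{2n}$, the vector ${\mathbf 1}_{\varepsilon A}$ has $\ell_1$-component a combination of the $e_n$ with coefficients in $\{\pm\frac12, \pm\frac14, \pm\frac12\pm\frac14\}$ (the last when both $2n-1, 2n \in A$), so the $\ell_1$-part has norm at most $\frac34 |n(A)| \leq \frac34 |A|$; the $c_0$-component is a combination of the $f_n$ with coefficients bounded by $\frac34 + \frac34 < 2$ in absolute value, so its $\infty$-norm is at most $2$. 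Hence $\|{\mathbf 1}_{\varepsilon A}\| \leq \frac34|A| + 2 \lesssim m$, giving $D(m) \lesssim m$.

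**For the lower bound** on $d(m)$ I would again use the $\ell_1$-component: for any $A$ with $|A| \geq m$ and any signs, project onto the first coordinate and observe that each underlying index $n \in n(A)$ contributes at least $\frac14$ to $\|\cdot\|_{\ell_1}$ when only one of $2n-1, 2n$ lies in $A$, and contributes $|\pm\frac12 \pm \frac14| \geq \frac14$ when both lie in $A$; since $|n(A)| \geq |A|/2 \geq m/2$, we get $\|{\mathbf 1}_{\varepsilon A}\| \geq \|(\ell_1\text{-part})\|_{\ell_1} \geq \frac14 \cdot \frac{m}{2} = \frac{m}{8}$, so $d(m) \gtrsim m$. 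Combined with the upper bound this closes the estimate.

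**The main obstacle** — really the only delicate point — is the case where both $E_{2n-1}$ and $E_{2n}$ are selected with signs $\varepsilon_{2n-1}, \varepsilon_{2n}$ that cause cancellation in one of the two components: in the $\ell_1$-part the coefficient $\frac{\varepsilon_{2n-1}}{2}e_n + \frac{\varepsilon_{2n}}{4}e_n$ can be as small as $\frac14$ (never zero, which is what saves the lower bound), while in the $c_0$-part $-\frac{\varepsilon_{2n-1}}{2}f_n + \frac{3\varepsilon_{2n}}{4}f_n$ can be as large as $\frac54$; one must check these interactions do not destroy either inequality, but since the $\ell_1$-coefficient is bounded away from $0$ and the $c_0$-coefficient is bounded above, both bounds survive. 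I would also note in passing that the basis is normalized (each $\|E_n\| = \frac12 + \frac12 = 1$ or $\frac14 + \frac34 = 1$), as already stated, so no separate verification is needed there.
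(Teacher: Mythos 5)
Your proof is correct and follows essentially the same route as the paper: the upper bound is immediate from normalization, and the lower bound $d(m)\geq \frac{1}{8}m$ comes from observing that the $\ell_1$-component of each underlying index carries a coefficient of absolute value at least $\frac14$ (in particular $|\pm\frac12\pm\frac14|\geq\frac14$ in the case of cancellation), exactly as in the paper's decomposition into the sets $A_1,A_2,A_3$. The only nitpick is the stated bound ``$\frac34+\frac34$'' for the $c_0$-coefficients, which should be $\frac12+\frac34=\frac54$, but this does not affect the argument.
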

		\begin{proof}
			Of course, $d(m)\leq D(m)\leq m$. We prove that $d(m)\geq \frac{1}{8}m$. To this end, given $A\subset \mathbb N$ finite, we write
			\begin{eqnarray*}
				A_1 = \lbrace k \in\mathbb N : 2k\in A\; \text{and}\;2k-1\in A\rbrace,\\
				A_2 = \lbrace k \in\mathbb N : 2k\in A\; \text{and}\; 2k-1\not\in A\rbrace,\\
				A_3= \lbrace k \in\mathbb N : 2k\not\in A\; \text{and}\; 2k-1\in A\rbrace.
			\end{eqnarray*}
			Observe that the sets $A_1, A_2, A_3$ are mutually disjoint, and $2\vert A_1\vert + \vert A_2\vert + \vert A_3\vert = \vert A\vert$. For any choice of signs,
			\begin{eqnarray*}
				\Vert \mathbf{1}_{\varepsilon A}\Vert &=& \Vert \sum_{k\in A_1} \varepsilon_{2k}E_{2k} + \varepsilon_{2k-1}E_{2k-1}+\sum_{k\in A_2}\varepsilon_{2k}E_{2_k} + \sum_{k\in A_3}\varepsilon_{2k-1}E_{2k-1}\Vert\\
				&=&\Vert \sum_{k\in A_1}\left( [\frac{1}{4}\varepsilon_{2k}+\frac{1}{2}\varepsilon_{2k-1}]e_k, [\frac{3}{4}\varepsilon_{2k}-\frac{1}{2}\varepsilon_{2k-1}]f_k\right)\\
				&+&\sum_{k\in A_2}\varepsilon_{2k}(\frac{1}{4}e_k,\frac{3}{4}f_k)+\sum_{k\in A_3}\varepsilon_{2k-1}(\frac{1}{2}e_k,-\frac{1}{2}f_k)\Vert\\
				&\geq& \sum_{k\in A_1}\vert \frac{1}{4}\varepsilon_{2k}+\frac{1}{2}\varepsilon_{2k-1}\vert + \sum_{k\in A_2}\frac{1}{4}+\sum_{k\in A_3}\frac{1}{2}.	
			\end{eqnarray*}
			Therefore, $$\Vert \mathbf{1}_{\varepsilon A}\Vert \geq \frac{1}{4}\vert A_1\vert + \frac{1}{4}\vert A_2\vert + \frac{1}{2}\vert A_3\vert \geq \frac{1}{8}\vert A\vert.$$
			This finishes the proof.
		\end{proof}
		
		Back to Example \ref{ex:no_D}:
		to see that the basis does not have the Property (D), take $z=\sum_{n=1}^N 2E_{2n}-\sum_{n=1}^N E_{2n-1}$. Then,
		$$\Vert z\Vert = \Vert \sum_{n=1}^N (0,2f_n)\Vert = 2.$$
		Write $z=\sum_{i\in A}a_i E_i$. Then, $\min_{i\in A}\vert a_i\vert = 1$ and 
		$$\Vert \sum_{i\in A} E_i\Vert = \Vert \sum_{n=1}^N E_{2n}+\sum_{n=1}^N E_{2n-1}\Vert = \Vert \sum_{n=1}^N (\frac{3}{4}e_n,\frac{1}{4}f_n)\Vert = \frac{3}{4}N+\frac{1}{4}.$$
		This shows that the Property (D) fails.
	\end{ex}
	
	Lemma 4.13 of \cite{DKTW} establishes that,
	if $w$ is equivalent to the constant, and $\mathcal B$ is $w$-semi-greedy,
	then $\mathcal B$ satisfies the Property (D). Here, we improve this result showing
	that the condition of being $w$-semi-greedy implies the Property (C).
	
	\begin{prop}\label{p:SG_implies_C}
		Assume that $w$ is equivalent to the constant, and the basis
		$\mathcal B$ is $w$-semi-greedy. Then $\mathcal B$ satisfies the Property (C).
	\end{prop}
	
	\begin{proof}
By Theorem \ref{t:semi_greedy}, $\mathcal B$ has the $w$-Property (A). By Proposition \ref{w-equivalent},
$\mathcal B$ also has the ``classical'' Property (A). This, in turn, implies the Property (C).
	\end{proof}


	\section{$w$-Partially-greedy bases}\label{s:partially_greedy}
	Partially-greedy and conservative bases were introduced in \cite{DKKT}, in order to
	compare the errors of greedy approximation with those of the canonical approximation
	relative to Schauder basis (the ``tails'' of the basis expansion).
	In this section we define $w$-partially-greedy and $w$-conservative bases
	and extend the characterization of partially-greedy bases proved in \cite{DKKT} to this more general setting.
	
	\begin{defn}\label{new-wpg}
		We say that $\mathcal B$ is \textbf{$w$-partially-greedy} if for all $m$ and $r$ such that $w(\lbrace 1,...,m\rbrace )\leq w(A_r(x))$, there exists a positive constant such that
		\begin{eqnarray}\label{cp}
		\Vert x-\mathcal{G}_r(x)\Vert \leq C\Vert \sum_{n=m+1}^\infty e_i^*(x)e_i\Vert.
		\end{eqnarray}
		We denote by $C_p$ the least constant that satisfies \eqref{cp} and we say that $\mathcal B$ is $C_p$-$w$-partially-greedy.
	\end{defn}
	
	\begin{defn}
		We say that $\mathcal B$ is \textbf{$w$-conservative} if there exists a positive constant $C$ such that
		\begin{eqnarray}\label{cco}
		\Vert \mathbf{1}_A\Vert \leq C\Vert \mathbf{1}_B\Vert,
		\end{eqnarray}
		for all pair of $A,B\in\mathbb N^{<\infty}$ such that $A<B$ and $w(A)\leq w(B)$.
		We denote by $C_c$ the least constant that satisfies \eqref{cco} and we say that $\mathcal B$ is $C_c$-$w$-conservative.
	\end{defn}
	
	\begin{rem}
		If $w\equiv 1$, we recover the classical definition of partially-greediness (resp.~conservativeness), and we will say that $\mathcal B$ is partially-greedy (resp.~conservative).
	\end{rem}
	
{\begin{rem}Note that for some choices of weight $w$, the property of $w$-conservativeness can be in some sense trivial. For instance, if $w=(2^{-n})_{n=1}^\infty$ then {\it every} seminormalized basis is $w$-conservative. This is because there are no nonempty $A,B\in\mathbb{N}^{<\infty}$ satisfying both $A<B$ and $w(A)\leq w(B)$.\end{rem}
	
	Let us give a simple characterization 
	of weights for which this occurs.
	
	\begin{prop}Let $w$ be a weight and set
		\begin{multline*}s_w:=\sup\big\{n\in\mathbb{N}_0:\text{there exist }A\in\mathbb{N}^n\text{ and }B\in\mathbb{N}^{<\infty}\text{ such that }A<B\text{ and }w(A)\leq w(B)\big\}.\end{multline*}
		Then $s_w<\infty$ if and only if every seminormalized basis is $w$-conservative.\end{prop}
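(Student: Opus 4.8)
The plan is to prove both directions of the equivalence directly from the definition of $s_w$.

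For the easy direction, suppose $s_w < \infty$. Then by definition of $s_w$ as a supremum, for every $n > s_w$ there are \emph{no} sets $A \in \mathbb{N}^n$ and $B \in \mathbb{N}^{<\infty}$ with $A < B$ and $w(A) \leq w(B)$. In other words, whenever $A, B \in \mathbb{N}^{<\infty}$ satisfy $A < B$ and $w(A) \leq w(B)$, we must have $|A| \leq s_w$. Given any seminormalized basis $\mathcal{B}$ and any such pair $A, B$, we then estimate crudely using the triangle inequality and the upper bound $\|e_n\| \leq c_2$ from (A1): $\|\mathbf{1}_A\| \leq \sum_{n \in A} \|e_n\| \leq c_2 |A| \leq c_2 s_w$. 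On the other hand, $B \neq \emptyset$ (since if $B = \emptyset$ then $w(B) = 0 < w(A)$ unless $A = \emptyset$ too, in which case the inequality $\|\mathbf{1}_A\| \leq C\|\mathbf{1}_B\|$ is trivially $0 \leq 0$), so picking any $j \in B$ we get $\|\mathbf{1}_B\| \geq c_1 \|e_j^*\|^{-1}\cdots$ — more simply, $1 = |e_j^*(\mathbf{1}_B)| \leq \|e_j^*\| \|\mathbf{1}_B\| \leq c_2 \|\mathbf{1}_B\|$, so $\|\mathbf{1}_B\| \geq 1/c_2$. Combining, $\|\mathbf{1}_A\| \leq c_2 s_w \leq c_2^2 s_w \|\mathbf{1}_B\|$, so $\mathcal{B}$ is $w$-conservative with constant $C_c \leq c_2^2 s_w$.

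For the converse, suppose $s_w = \infty$; I will construct a seminormalized basis that is not $w$-conservative. Since $s_w = \infty$, for every $n$ there exist $A_n \in \mathbb{N}^n$ and $B_n \in \mathbb{N}^{<\infty}$ with $A_n < B_n$ and $w(A_n) \leq w(B_n)$; in particular $|A_n| = n \to \infty$. The natural candidate is a basis resembling the summing basis of $c_0$, or more simply a basis where $\|\mathbf{1}_A\|$ grows like $|A|$ for initial-segment-type sets but stays bounded for later sets — for instance one can work inside a space like $c_0 \oplus \big(\bigoplus_{\ell_1} \mathbb{R}^{k_N}\big)$, placing the "front" coordinates so that the James-type norm of $\mathbf{1}_{A_n}$ is of order $n$ while $\|\mathbf{1}_{B_n}\|$ stays $O(1)$. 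Then $\|\mathbf{1}_{A_n}\| / \|\mathbf{1}_{B_n}\| \to \infty$ while $A_n < B_n$ and $w(A_n) \leq w(B_n)$, contradicting $w$-conservativeness.

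The main obstacle is the converse construction: one needs a single seminormalized basis that simultaneously handles the countably many pairs $(A_n, B_n)$ whose structure (gaps, positions) is dictated by $w$ rather than chosen freely. The cleanest route is probably to not build an ad hoc example at all, but instead observe that the sets $A_n$ can be taken to be of the form $A_n = \{1, 2, \ldots\} \cap (\text{some interval})$ is \emph{not} guaranteed — $A_n$ need only satisfy $A_n < B_n$, so $A_n$ is an arbitrary finite set lying entirely below $B_n$. A uniform example is to take $\mathbb{X}$ to be the completion of $c_{00}$ under $\|\sum a_i e_i\| = \max\{ \sup_i |a_i|,\ \sup_{F \text{ interval}} |\sum_{i \in F} a_i| \cdot \varphi(\max F) \}$ for a suitable slowly-decaying weight $\varphi$ built from $w$; checking that this is a Schauder basis with uniformly bounded partial sums, that it is seminormalized, and that it fails $w$-conservativeness along the $(A_n, B_n)$ is the technical heart. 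Alternatively, if the $A_n$ may be assumed to be intervals (which one should check follows from the definition of $s_w$ by a pigeonhole/subinterval argument on where the $w$-mass concentrates), then the classical summing basis of $c_0$, reindexed, does the job immediately, and this is the approach I would pursue first.
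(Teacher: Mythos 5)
Your forward direction is correct and is essentially identical to the paper's argument: $s_w<\infty$ forces $|A|\leq s_w$ for every admissible pair, whence $\Vert\mathbf{1}_A\Vert\leq c_2 s_w\leq c_2^2 s_w\Vert\mathbf{1}_B\Vert$.

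The converse, however, has a genuine gap: you correctly identify that a construction is needed, but you do not carry one out. You propose two routes, one conditional on the unverified (and in general false) claim that the $A_n$ may be taken to be intervals --- replacing $A_n$ by an interval of the same size changes $w(A_n)$ uncontrollably for a general weight, so no pigeonhole argument will rescue this --- and one via a James/summing-type norm whose verification you explicitly defer (``the technical heart''). Two ideas are missing. First, a \emph{disjointification} step: from $s_w=\infty$ one must extract pairs arranged as $A_1<B_1<A_2<B_2<\cdots$ with $|A_n|\geq n$ and $w(A_n)\leq w(B_n)$; this is done by choosing $\widehat{A}_{n+1}$ with $|\widehat{A}_{n+1}|>n+\max B_n$ and setting $A_{n+1}=\widehat{A}_{n+1}\setminus\{1,\ldots,\max B_n\}$ (removing elements only decreases $w(A_{n+1})$, so the inequality survives). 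The point of the intertwining is that each $B_k$ is then disjoint from \emph{every} $A_j$. Second, a concrete norm: on $c_{00}$ take
$$\Vert(a_n)_{n=1}^\infty\Vert=\Vert(a_n)_{n=1}^\infty\Vert_\infty\vee\sup_{k\in\mathbb{N}}\sum_{n\in A_k}|a_n|,$$
which is a lattice norm, so the canonical basis is normalized and $1$-unconditional (hence a seminormalized Schauder basis with no further checking), and one reads off $\Vert\mathbf{1}_{A_k}\Vert=|A_k|\geq k$ while $\Vert\mathbf{1}_{B_k}\Vert=1$ precisely because $B_k$ meets no $A_j$. Without the disjointification there is no control on $\Vert\mathbf{1}_{B_k}\Vert$, and without an explicit norm the claim that the basis is seminormalized and fails $w$-conservativeness is unsubstantiated; so as written your converse is an outline of intent rather than a proof.
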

	
	\begin{proof}($\Longrightarrow$): Suppose $s_w<\infty$. Let $(e_n)_{n=1}^\infty$ be a seminormalized basis for a Banach space $\mathbb X$, and select $A,B\in\mathbb{N}^{<\infty}$ such that $A<B$ and $w(A)\leq w(B)$. Observe that $\Vert\mathbf{1}_A\Vert\leq c_2\cdot\vert A\vert\leq c_2s_w$. It follows immediately that $\Vert\mathbf{1}_A\Vert\leq c_2s_w\leq c_2^2s_w\Vert\mathbf{1}_B\Vert$. Hence, $(e_n)_{n=1}^\infty$ is $(c_2^2s_w)$-$w$-conservative.
		
		($\Longleftarrow$): Suppose $s_w=\infty$. Let's inductively construct sequences $(A_n)_{n=1}^\infty\subset\mathbb{N}^{<\infty}$ and $(B_n)_{n=1}^\infty\subset\mathbb{N}^{<\infty}$ satisfying
		$$A_1<B_1<A_2<B_2<A_3<B_3<\ldots,$$
		and also satisfying $\vert A_n\vert \geq n$ and $w(A_n)\leq w(B_n)$ for all $n\in\mathbb{N}$.
		Let us begin by selecting $A_1\in\mathbb{N}^{<\infty}$ and $B_1\in\mathbb{N}^{<\infty}$ with $\vert A_1\vert=1$, $A_1<B_1$, and $w(A_1)\leq w(B_1)$, which is possible as $s_w\geq 1$. This is the base case; from now on, we proceed inductively. Since $s_w=\infty$, we may select $\widehat{A}_{n+1}\in\mathbb{N}^{<\infty}$ and $B_{n+1}\in\mathbb{N}^{<\infty}$ with $\vert \widehat{A}_{n+1}\vert>n+\max B_n$, $\widehat{A}_{n+1}<B_{n+1}$, and $w(\widehat{A}_{n+1})<w(B_{n+1})$. Now set $A_{n+1}=\widehat{A}_{n+1}\setminus\{1,\ldots,\max B_n\}$ so that we have $\vert A_{n+1}\vert>n$, $A_{n+1}<B_{n+1}$, and $w(A_{n+1})<w(B_{n+1})$. This completes the inductive step, and gives us our intertwining sequences with the desired properties. We may now define a norm on $c_{00}$ via the rule
		$$\Vert(a_n)_{n=1}^\infty\Vert_\mathbb X=\Vert(a_n)_{n=1}^\infty\Vert_\infty\vee\sup_{k\in\mathbb{N}}\sum_{n\in A_k}|a_n|\;\;\;\forall\,(a_n)_{n=1}^\infty\in c_{00},$$
		and denote by $\mathbb X$ the completion of $c_{00}$ under this norm. It is clear that the standard canonical basis for this space form a normalized 1-unconditional basis. However, it fails to be $w$-conservative as $\Vert\mathbf{1}_{A_k}\Vert_\mathbb X=\vert A_k\vert\geq k$ whereas $\Vert\mathbf{1}_{B_k}\Vert_\mathbb X=1$ for all $k\in\mathbb{N}$.\end{proof}
	
	\begin{prop}Let $w$ be a nonincreasing weight, i.e., $w_{n+1}\leq w_n$ for all $n\in\mathbb{N}$. Then every conservative basis in a Banach space is $w$-conservative with the same constant.\end{prop}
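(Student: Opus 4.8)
The plan is to reduce the $w$-conservative inequality to the ordinary conservative inequality. The point is that, when $w$ is nonincreasing, the two hypotheses $A<B$ and $w(A)\leq w(B)$ already force $|A|\leq|B|$; once we know this, the ordinary conservative estimate $\Vert\mathbf{1}_A\Vert\leq C_c\Vert\mathbf{1}_B\Vert$ applies directly and yields the claim with the very same constant.

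First I would clear away the degenerate cases. If $A=\emptyset$ there is nothing to prove. If $A\neq\emptyset$ but $B=\emptyset$, then $w(A)>0=w(B)$ (all weights are strictly positive, since $w\in(0,\infty)^{\mathbb N}$), contradicting the hypothesis; so we may assume both $A$ and $B$ nonempty. Write $A=\{a_1<\dots<a_p\}$ and $B=\{b_1<\dots<b_q\}$; the condition $A<B$ says $a_p<b_1$.

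The key step is the cardinality comparison. Suppose, toward a contradiction, that $p\geq q+1$. Since $w$ is nonincreasing, each $w_{a_i}$ with $i\leq p$ satisfies $w_{a_i}\geq w_{a_p}$, and $a_p<b_1$ gives $w_{a_p}\geq w_{b_1}$; hence $w(A)\geq p\,w_{a_p}\geq p\,w_{b_1}$. On the other hand, each $w_{b_i}\leq w_{b_1}$, so $w(B)\leq q\,w_{b_1}$. As $w_{b_1}>0$ and $p>q$, we get $w(A)\geq p\,w_{b_1}>q\,w_{b_1}\geq w(B)$, contradicting $w(A)\leq w(B)$. Therefore $|A|=p\leq q=|B|$, and then conservativeness of $\mathcal B$ gives $\Vert\mathbf{1}_A\Vert\leq C_c\Vert\mathbf{1}_B\Vert$, which is exactly the $w$-conservative inequality with constant $C_c$.

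I do not expect a genuine obstacle here; the only points requiring a bit of care are keeping the direction of the weight inequalities straight (nonincreasing $w$ means that smaller indices carry the larger weights, which is precisely what makes ``$A$ lying to the left of $B$'' combine favorably with ``$w(A)$ small''), and invoking the strict positivity of the weight when passing from $p>q$ to $p\,w_{b_1}>q\,w_{b_1}$.
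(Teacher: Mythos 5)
Your proof is correct and follows essentially the same route as the paper: both arguments derive $|A|\leq|B|$ from the chain $|A|\cdot w_{\max A}\leq w(A)\leq w(B)\leq |B|\cdot w_{\min B}\leq |B|\cdot w_{\max A}$ (yours phrased as a contradiction) and then invoke ordinary conservativeness. The extra care with empty sets and strict positivity of the weight is fine but not a substantive difference.
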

	
	\begin{proof}Let $(e_n)_{n=1}^\infty$ be a conservative basis in a Banach space $\mathbb X$, and select any $A,B\in\mathbb{N}^{<\infty}$ satisfying both $A<B$ and $w(A)\leq w(B)$. Now,
		$$\vert A\vert\cdot w_{\max A}\leq w(A)\leq w(B)\leq\vert B\vert\cdot w_{\min B}\leq \vert B\vert\cdot w_{\max A},$$
		so that $\vert A\vert\leq\vert B\vert$.
	\end{proof}
	
	
	\begin{thm}\label{t:partially_greedy}
		A basis $\mathcal B$ is $w$-partially-greedy if and only if $\mathcal B$ is quasi-greedy and $w$-conservative.
	\end{thm}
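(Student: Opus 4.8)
The plan is to follow the classical pattern from \cite{DKKT}, making the adjustments needed for the $w$-measure. First I would prove the easy direction: if $\mathcal B$ is $w$-partially-greedy, then it is quasi-greedy and $w$-conservative. For quasi-greediness, take any $x$ and greedy set $A_r(x)$ of cardinality $r$; choosing $m=0$ (so that $w(\{1,\dots,m\})=0\le w(A_r(x))$ trivially), the inequality \eqref{cp} becomes $\Vert x-\mathcal G_r(x)\Vert\le C_p\Vert x\Vert$, so $\mathcal B$ is $C_p$-quasi-greedy. For $w$-conservativeness, take finite sets $A<B$ with $w(A)\le w(B)$; one builds a vector $z$ supported on $\{1,\dots,\max B\}$ of the form $z=\mathbf 1_{\eta A}+\alpha\mathbf 1_{\varepsilon B}+(\text{small correction filling the gaps})$, arranged so that a greedy set of appropriate cardinality $r$ picks out exactly $A$ together with the gap between $A$ and $B$, while $m=\max A$ satisfies $w(\{1,\dots,m\})\ge w(A)$ and hence we need the index sets chosen so that $w(\{1,\dots,m\})\le w(A_r(z))$; then $x-\mathcal G_r(x)$ recovers $\mathbf 1_{\eta A}$ up to bounded perturbation, and the tail $\sum_{n>m}e_n^*(z)e_n$ recovers $\alpha\mathbf 1_{\varepsilon B}$, giving $\Vert\mathbf 1_A\Vert\lesssim\Vert\mathbf 1_B\Vert$ after letting $\alpha$ and the correction tend to suitable limits. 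This mirrors the unconditionality argument in the proof of Theorem \ref{1wgreedy}.

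For the converse, assume $\mathcal B$ is $C_q$-quasi-greedy and $C_c$-$w$-conservative, and fix $x$, $r$, $m$ with $w(\{1,\dots,m\})\le w(A_r(x))$. Write $A=A_r(x)$, split $A=A'\cup A''$ where $A'=A\cap\{1,\dots,m\}$ and $A''=A\setminus\{1,\dots,m\}$, and let $E=\{1,\dots,m\}\setminus A'$. The idea is to compare $x-\mathcal G_r(x)=P_{(A)^c}(x)$ with the tail $P_{\{m+1,\dots\}^c{}^c}(x)=\sum_{n>m}e_n^*(x)e_n$. One first uses quasi-greediness together with the Property (C) (available for quasi-greedy bases, see \cite[Lemma 2.3]{BBG}) to control $\Vert\mathbf 1_{E}\Vert$ by $\min_{j\in A}|e_j^*(x)|^{-1}$ times $\Vert$ tail $\Vert$; then, since $E<A''$ and $w(E)\le w(\{1,\dots,m\})\le w(A)=w(A')+w(A'')$, after removing $A'$ one gets $w(E)\le w(A'')$, so $w$-conservativeness yields $\Vert\mathbf 1_E\Vert\le C_c\Vert\mathbf 1_{A''}\Vert$. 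Feeding this back through a truncation-function estimate (Lemma \ref{trun}, used exactly as in the proofs of Theorems \ref{1wgreedy} and \ref{1walmostgreedy}) converts the bound into $\Vert x-\mathcal G_r(x)\Vert\lesssim\Vert\sum_{n>m}e_n^*(x)e_n\Vert$, with constant a product of $C_q$, $C_c$, $K_b$ and the Property (C) constant.

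The main obstacle is the bookkeeping in the converse direction: one must carefully track which part of the greedy set lies in the initial segment $\{1,\dots,m\}$ and which lies beyond it, and verify that after deleting the overlap $A'$ the $w$-measure inequality $w(E)\le w(A'')$ genuinely survives — this is where the hypothesis $w(\{1,\dots,m\})\le w(A_r(x))$ is used in an essential way, and where the proof genuinely differs from the $w\equiv 1$ case (there $|E|\le|A''|$ is immediate from counting, but here one needs the additivity of $w$ on disjoint sets plus the fact that $E$ and $A''$ are separated by $m$). A secondary technical point is ensuring the truncation operator $T_t$ interacts correctly with the projections when $\min_{j\in A}|e_j^*(x)|$ is used as the truncation level; this is routine given Lemma \ref{trun} but must be stated with the right constant.
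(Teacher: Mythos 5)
Your converse direction and your derivation of $w$-conservativeness follow essentially the same route as the paper: the decomposition of the greedy set $A_r(x)=D\cup B$ with $D$ the part inside $[1,m]$ and $B$ the part beyond $m$, the observation $w([1,m]\setminus D)=w([1,m])-w(D)\le w(A_r(x))-w(D)=w(B)$, and the chain through Lemma \ref{trun} and the convexity estimate of Lemma \ref{part1} are exactly what the paper does (your $A'$, $A''$, $E$ are the paper's $D$, $B$, $A$). The test vector for conservativeness is likewise the paper's $\mathbf{1}_A+(1+\delta)\mathbf{1}_{B\cup D}$ with $D=[1,\max A]\setminus A$; only note that the greedy set of cardinality $r=|B\cup D|$ must be $B\cup D$ itself (so that $x-\mathcal G_r(x)=\mathbf{1}_A$ and the tail past $m=\max A$ is $(1+\delta)\mathbf{1}_B$), not ``$A$ together with the gap'' as you wrote --- that phrase contradicts your own next clause.

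The genuine gap is in your deduction of quasi-greediness. You take $m=0$, so that $w(\{1,\dots,m\})=w(\emptyset)=0\le w(A_r(x))$ holds vacuously and the tail in \eqref{cp} is all of $x$. But Definition \ref{new-wpg} is intended with $m\ge 1$, exactly as in the classical definition of partially-greedy bases in \cite{DKKT}, where deducing quasi-greediness is likewise nontrivial; if $m=0$ were admissible, the quasi-greedy implication would be a one-line triviality and there would be no content in that half of the theorem. With $m\ge 1$ the smallest usable initial segment is $\{1\}$, and the condition $w(\{1\})=w_1\le w(A_r(x))$ need not hold for a given $x$ and $r$. The repair, which is the paper's argument, is to pass to $\tilde x=te_1+\sum_{i\ge 2}e_i^*(x)e_i$ with $t=\max_i|e_i^*(x)|+\delta$, so that the index $1$ belongs to every nonempty greedy set of $\tilde x$ and hence $w(\{1\})\le w(A_r(\tilde x))$ automatically; one then applies \eqref{cp} to $\tilde x$ with $m=1$, splits into the cases $1\notin A_r(x)$ and $1\in A_r(x)$ to relate $\mathcal G_r(\tilde x)$ to $\mathcal G_{r-1}(x)$ (respectively to $\mathcal G_{r-1}(x-e_1^*(x)e_1)$), and recovers $\Vert\mathcal G_r(x)\Vert\le\bigl((C_p+1)(K_b+1)+c_2^2\bigr)\Vert x\Vert$. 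Without some version of this step your forward direction is incomplete.
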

	
	\begin{proof}
		Assume that $\mathcal B$ is $C_p$-$w$-partially-greedy.
		\begin{enumerate}
			\item \underline{$w$-conservative:} take $A$ and $B$ such that $A<B$ and $w(A)\leq w(B)$. Let $m= \max A$ and define the set $D=[1,..,m]\setminus A$. Of course,
			$$w(\lbrace 1,...,m\rbrace)=w(A\cup D)\leq w(B\cup D).$$
			Define now $x:=\mathbf{1}_A+(1+\delta)\mathbf{1}_{B\cup D}$. Then,
			$$\Vert \mathbf{1}_A\Vert = \Vert x-\mathcal{G}_{\vert B\cup D\vert}(x)\Vert \leq C_p\Vert (1+\delta)\mathbf{1}_B\Vert.$$
			Taking $\delta\rightarrow 0$, the basis is $w$-conservative.
			
			\item\underline{Quasi-greedy:} here, we consider two cases.
			\begin{enumerate}
				\item[a)] Assume that the index $1 \not\in A_r(x)$. Define then
				$\tilde{x}=te_1+\sum_{i=2}^\infty e_i^*(x)e_i = x + (t - e_1^*(x)) e_1$,
				with $t=\max\vert e_i^*(x)\vert + \delta$ with $\delta>0$. Then
				$$\mathcal{G}_r(\tilde{x})= te_1+\mathcal{G}_{r-1}(x),$$
				hence $\tilde{x}-\mathcal{G}_r(\tilde{x}) = \sum_{i=2}^\infty e_i^*(x)e_i - \mathcal{G}_{r-1}(x).$ Thus, using the triangle inequality and the fact that $w(\lbrace 1\rbrace)\leq w(A_r(\tilde{x}))$,
				\begin{eqnarray*}
					\Vert \mathcal{G}_{r-1}(x)\Vert &\leq& \Vert \tilde{x}-\mathcal{G}_r(\tilde{x})\Vert + \Vert \sum_{i=2}^\infty e_i^*(x)e_i \Vert \leq C_p\Vert \sum_{i=2}^\infty e_i^*(x)e_i \Vert + \Vert \sum_{i=2}^\infty e_i^*(x)e_i \Vert\\\nonumber
					&\leq& (C_p+1)(1+K_b)\Vert x\Vert.
				\end{eqnarray*}
				That's implies that $\Vert \mathcal{G}_r(x)\Vert \leq ((C_p+1)(K_b+1)+c_2^2)\Vert x\Vert.$
				\item[b)] Assume now that $1 \in A_r(x)$. Taking the same $\tilde{x}$ that in the above case,
				$$\mathcal{G}_r(\tilde{x}) = \mathcal{G}_{r-1}(x-e_1^*(x)e_1)+te_1,$$
				so $\tilde{x}-\mathcal{G}_r(\tilde{x}) = \sum_{i=2}^\infty e_i^*(x)e_i -\mathcal{G}_{r-1}(x-e_1^*(x)e_1).$ Hence, using the same argument than before,
				\begin{eqnarray*}
					\Vert \mathcal{G}_{r-1}(x-e_1^*(x)e_1)\Vert &\leq& \Vert \tilde{x}-\mathcal{G}_r(\tilde{x})\Vert + \Vert \sum_{i=2}^\infty e_i^*(x)e_i \Vert\\
					&\leq& C_p\Vert \sum_{i=2}^\infty e_i^*(x)e_i \Vert + \Vert \sum_{i=2}^\infty e_i^*(x)e_i \Vert\\\nonumber
					&\leq& (C_p+1)(1+K_b)\Vert x\Vert.
				\end{eqnarray*}
				
				Now, $\Vert \mathcal{G}_{r-1}(x-e_1^*(x)e_1)\Vert = \Vert \mathcal{G}_{r-1}(x-e_1^*(x)e_1)+e_1^*(x)e_1-e_1^*(x)e_1\Vert=\Vert \mathcal G_r(x)-e_1^*(x)e_1\Vert$. So, using the triangle inequality, we obtain that $\Vert \mathcal G_r(x)\Vert \leq ((C_p+1)(K_b+1)+c_2^2)\Vert x\Vert$.
			\end{enumerate}
		\end{enumerate}
		
		Now, assume that $\mathcal B$ is $C_c$-$w$-conservative and $C_q$-quasi-greedy, and show that $\mathcal B$ is $w$-partially-greedy.
		Take $x\in\mathbb X$, $m$, and $r$ as in the definition of $w$-partially-greedy,
		and consider the sets
		$$D:=\lbrace \rho(j): j\leq r, \rho(j)\leq m\rbrace, \, \, 
		B:=\lbrace \rho(j): j\leq r, \rho(j)>m\rbrace, \, \,
		A:= [1,...,m]\setminus D,$$
		where $\rho$ is the greedy ordering.
		Then $A_r(x) = B \cup D$, and
		$w(A)=w(\lbrace 1,...,m\rbrace)-w(D)\leq w(A_r(x))-w(D)=w(B).$ 
		$$x-\mathcal{G}_r(x) = \sum_{i=m+1}^\infty e_i^*(x)e_i - P_B(x) + P_A(x).$$
		On the one hand, $\Vert P_B(x)\Vert \leq 2C_q\Vert \sum_{i=m+1}^\infty e_i^*(x)e_i\Vert$. On the other hand, using Lemmas \ref{trun} and \ref{part1} with $\eta \equiv \sgn(e_j^*(x))$,
		\begin{eqnarray*}
			\Vert P_A(x)\Vert &\leq& 4C_qC_c\max_A\vert e_i^*(x)\vert\Vert \mathbf{1}_{\eta B}\Vert \leq 4C_qC_c\min_B\vert e_i^*(x)\vert\Vert \mathbf{1}_{\eta B}\Vert\\
			&\leq& 8C_q^2C_c\Vert P_B(x)\Vert \leq 8C_q^3C_c\Vert \sum_{i=m+1}^\infty e_i^*(x)e_i\Vert.
		\end{eqnarray*}
		Then, $\Vert x-\mathcal{G}_r(x)\Vert \lesssim C_q^3C_c\Vert \sum_{i=m+1}^\infty e_i^*(x)e_i\Vert.$
	\end{proof}

	\begin{rem} Note that if the inequality $\Vert x-\mathcal G_r(x)\Vert \le C\Vert x-S_m(x)\Vert$
		is satisfied for $m$ and $r$, then it is automatically satisfied -- with a different constant --
		for any $n < m$ and the same $r$
		(since $C\Vert x-S_m(x)\Vert \le (1 + K_b)C\Vert x-S_n(x)\Vert$ where $K_b$ is the basis constant).
		So we only need to check the condition in the definition of $w$-partially-greedy for the largest $m$
		satisfying $w([1,\dots,m]) \le w(A_r(x))$. 
		
		Using the constant weight $w\equiv 1$, we recover the usual definition of a partially-greedy basis.
		Indeed, for $w\equiv 1$, the largest $m$ satisfying the definition is $m = r$,
		which recaptures the original definition of partially-greedy given in \cite{DKKT}.
	\end{rem}

	\subsection{Example of conservative and not democratic basis.}
	Define the set $$\mathcal S=\lbrace A\in\mathbb N^{<\infty} : \vert A\vert \leq \sqrt{\min A}\rbrace.$$
	Observe that $\mathcal S$ has the \textit{spreading property}, i.e, if $m\in\mathbb N$, $(f_i)_{i=1}^m\in\mathcal S$ and $(g_i)_{i=1}^m\in\mathbb N^m$ with $f_i\leq g_i$ for all $i=1,\dots,n$, then $(g_i)_{i=1}^m \in\mathcal S$. It also \textit{hereditary}, i.e., if $A\in\mathcal S$ and $B \subset A$ then $B\in \mathcal S$.
	
	Now, let $\mathbb X$ be the Banach space that we define like the completion of $c_{00}$ under the norm
	$$\Vert (a_n)_n\Vert = \sup_{A\in \mathcal S}\sum_{n\in A}\vert a_n\vert.$$
	Observe that this is a very slight modification of the Schreier space.
	
	Of course, the canonical basis $(e_n)_n$ is a normalized 1-unconditional basis. Note that the hereditary property guarantees that
	$$\Vert \mathbf{1}_A\Vert = \sup_{F\in\mathcal S, F\subseteq A}\vert F\vert.$$
	
	Now, if $A<B$ and $\vert A\vert \leq \vert B\vert$, then there is $F\in\mathcal S$ with $F\subseteq A$ such that $\Vert \mathbf{1}_A\Vert = \vert F\vert$. By the spreading property, we can ``push out" $F$ to obtain a set $G\subseteq B$ such that $G\in\mathcal S$ and $\vert G\vert= \vert F\vert$. Hence,
	$$\Vert \mathbf{1}_A\Vert = \vert F\vert =\vert G\vert \leq \Vert \mathbf{1}_B\Vert.$$
	Thus, the basis is conservative with constant 1.
	
	To prove that the basis is not democratic, we can select the sets $A=\lbrace N^2+1,...,N^2+N\rbrace$ and $B=\lbrace 1,...,N\rbrace$ . Then, since $A\in \mathcal S$, $\Vert \mathbf{1}_A\Vert = N$. However, $\Vert \mathbf{1}_B\Vert \leq \sqrt{N}$, hence the basis is not democratic: to prove this upper estimate, take a set $A_1 \in \mathcal S$ such that $\Vert \mathbf{1}_B\Vert = \vert A_1\vert$. Then, $\min A_1\leq N$, so $\vert A_1\vert \leq \sqrt{N}$. Hence, $\Vert \mathbf{1}_B\Vert \leq \sqrt{N}$.
	
	\begin{rem}
	Of course, since the canonical basis is unconditional (hence, quasi-greedy) and conservative, is partially-greedy, but not almost-greedy because is not democratic.
	\end{rem}
	\subsection{Example of a $w$-greedy basis which is not conservative (hence not greedy)} 
	
	\begin{defn}Fix $1\leq p<q\leq\infty$, and consider $(e_n)_{n=1}^\infty$ and $(f_n)_{n=1}^\infty$ the respective canonical bases of $\ell_p$ and $\ell_q$ (or $c_0$ if $q=\infty$). Let $w=(w_n)_{n=1}^\infty\in(0,\infty)^\mathbb{N}$. We define the {\bf Rosenthal-Woo space} $X_{q,p,w}$ as the closed subspace $[f_n\oplus w_ne_n]_{n=1}^\infty$ of $\ell_q\oplus_\infty\ell_p$. For $\textbf{s}=(s_n)_{n=1}^\infty$, the summing basis of $c_0$, we can define $X_{q,\textbf{s},w}$ similarly as the subspace $[f_n\oplus w_ns_n]_{n=1}^\infty$ of $\ell_q\oplus_\infty c_0$.\end{defn}
	
	It was mentioned in \cite{DKTW} that if $w\in(0,\infty)^\mathbb{N}$ satisfies $w\in c_0\setminus\ell_1$ then the basis formed by completing $c_{00}$ under the norm
	$$\Vert(a_n)_{n=1}^\infty\Vert_\infty\vee\left(\sum_{n=1}^\infty|a_n|^2w_n\right)^{1/2},\;\;\;(a_n)_{n=1}^\infty\in c_{00},$$
	forms a normalized 1-$w$-greedy basis which is not greedy. In fact, this is just the canonical basis of the Rosenthal-Woo space $X_{\infty,2,w^{1/2}}$. More generally, we have the following.

	\begin{prop}\label{RW-infty}Fix $1\leq p<\infty$ and $w\in(0,\infty)^\mathbb{N}\cap(c_0\setminus\ell_1)$. Then the canonical basis of $X_{\infty,p,w^{1/p}}$ is $1$-$w$-greedy, but it is not conservative.\end{prop}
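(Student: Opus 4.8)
The plan is to obtain the $w$-greedy assertion from Theorem~\ref{1wgreedy} and to refute conservativeness directly. Write $(g_n)_{n=1}^\infty$ for the canonical basis of $X_{\infty,p,w^{1/p}}$, so $g_n=f_n\oplus w_n^{1/p}e_n$, and for $x=\sum_n a_ng_n$ let $g_n^*(x)=a_n$ denote the $n$-th coordinate. The first step is to record the norm formula
$$\|x\|=\max\Big\{\sup_n|a_n|,\ \Big(\sum_n|a_n|^pw_n\Big)^{1/p}\Big\},$$
which holds because the $c_0$-coordinate of $\sum a_ng_n$ contributes $\sup_n|a_n|$ and the $\ell_p$-coordinate contributes $\big(\sum_n|a_n|^p(w_n^{1/p})^p\big)^{1/p}$. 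From this one reads off two facts I would verify at once: the norm depends only on $(|a_n|)_n$, so $(g_n)$ is $1$-unconditional (in particular $1$-suppression unconditional and a Schauder basis with $K_b=1$); and $\|g_n\|=\max\{1,w_n^{1/p}\}$ while $\|g_n^*\|\le1$, which together with $w\in c_0$ makes the basis semi-normalized. Hence $(g_n)$ satisfies the standing hypotheses (A1)--(A4), and $X_{\infty,p,w^{1/p}}$ is exactly the space of $(a_n)$ with $a_n\to0$ and $\sum_n|a_n|^pw_n<\infty$.

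The second step is to check the $1$-$w$-Property (A). Fix $x\in X$, $t\ge\sup_j|g_j^*(x)|$, disjoint finite sets $A,B$ with $w(A)\le w(B)$ and $(A\cup B)\cap\supp(x)=\emptyset$, and signs $\varepsilon,\eta$. Since the norm is a function of the moduli of the coordinates, the signs play no role, so it is enough to compare $\|x+t\mathbf{1}_A\|$ with $\|x+t\mathbf{1}_B\|$; note both vectors lie in $X$ because $A$ and $B$ are finite. Because $t$ dominates every coordinate of $x$, the $c_0$-coordinate of each of $x+t\mathbf{1}_A$ and $x+t\mathbf{1}_B$ equals $t$, whereas their $\ell_p$-coordinates are $\big(\sum_{j\in\supp x}|g_j^*(x)|^pw_j+t^pw(A)\big)^{1/p}$ and $\big(\sum_{j\in\supp x}|g_j^*(x)|^pw_j+t^pw(B)\big)^{1/p}$ respectively. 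As $w(A)\le w(B)$, the former is at most the latter, so $\|x+t\mathbf{1}_{\varepsilon A}\|\le\|x+t\mathbf{1}_{\eta B}\|$; this is precisely the $w$-Property (A) with constant $C_a=1$. The third step is then immediate: by Theorem~\ref{1wgreedy}(b) a $K_u$-unconditional basis with the $C_a$-$w$-Property (A) is $w$-greedy with $C_g\le K_uC_a$, so here $C_g\le1$, i.e.\ $(g_n)$ is $1$-$w$-greedy.

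For the last step, non-conservativeness, I would exhibit a sequence of pairs violating the defining inequality. Since $w\notin\ell_1$, the partial sums $\sum_{n=1}^k w_n$ tend to $\infty$, so with $A_k=\{1,\dots,k\}$ one gets $\|\mathbf{1}_{A_k}\|=\max\{1,(\sum_{n=1}^k w_n)^{1/p}\}\to\infty$. Since $w\in c_0$, for each $k$ choose $N_k>k$ with $w_n\le1/k$ for all $n>N_k$ and set $B_k=\{N_k+1,\dots,N_k+k\}$; then $A_k<B_k$, $|A_k|=|B_k|=k$, and $w(B_k)\le1$, so $\|\mathbf{1}_{B_k}\|=1$. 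Thus no finite constant $C$ can satisfy $\|\mathbf{1}_{A_k}\|\le C\|\mathbf{1}_{B_k}\|$ for all $k$, and the basis is not conservative (it in particular fails democracy, hence is not greedy). I do not anticipate a genuine obstacle in this argument; the only points needing a little care are the observation that $x+t\mathbf{1}_A$ and $x+t\mathbf{1}_B$ remain in $X$ (finiteness of $A,B$) and the fact that their $c_0$-coordinates are exactly $t$, which uses the hypothesis $t\ge\sup_j|g_j^*(x)|$.
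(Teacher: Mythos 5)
Your proposal is correct and follows essentially the same route as the paper: both verify the $w$-Property (A) with constant $1$ directly from the norm formula $\Vert x\Vert = \sup_n|a_n| \vee (\sum_n |a_n|^p w_n)^{1/p}$, combine it with $1$-unconditionality via Theorem \ref{1wgreedy}(b), and refute conservativeness by comparing $A_k=\{1,\dots,k\}$ (whose $w$-measure, hence norm, blows up since $w\notin\ell_1$) with an equinumerous block pushed far enough out that its $w$-measure is at most $1$ (possible since $w\in c_0$). The extra verifications you include (semi-normalization, the identification of the space, the role of $t\geq\sup_j|g_j^*(x)|$) are sound and only make the argument more explicit.
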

	
	\begin{proof} Clearly it is unconditional with constant 1. To prove that the canonical basis is $w$-greedy
		with constant 1, we need to show that it satisfies the $w$-Property (A) with constant 1 (Theorem \ref{1wgreedy}).
		For that, take $x\in\mathbb X_{\infty,p,w^{1/p}}$ with $\sup_j\vert e_j^*(x)\vert \leq 1$,
		and consider $A,B \subset \N$ such that $A\cap B = \emptyset$, $\supp(x)\cap (A\cup B)=\emptyset$, $w(A)\leq w(B)$. Then, if $\varepsilon$ and $\eta$ are arbitrary choice of signs, 
		\begin{eqnarray*}
			\Vert x+\mathbf{1}_{\varepsilon A}\Vert &=& 1 \vee \left(\sum_{n\in \supp(x)}\vert e_n^*(x)\vert^p w_n + w(A)\right)^{1/p}\\
			&\leq& 1 \vee \left(\sum_{n\in \supp(x)}\vert e_n^*(x)\vert^p w_n + w(B)\right)^{1/p}=\Vert x+\mathbf{1}_{\eta B}\Vert.
		\end{eqnarray*}
		Then, the basis satisfies the $w$-Property (A) with constant 1, hence, using that the basis is unconditional with constant 1, the basis is $w$-greedy with constant 1.
		
		To see that it fails to be conservative, fix $m\in\mathbb{N}$ and set $A_m=\{1,\ldots,m\}$ and $B_{m,k}=\{k+1,\ldots,k+m\}$ for each $k\in\mathbb{N}$. Now observe that $w\in c_0$ ensures that $w(B_{m,k})\to 0$ when $k\rightarrow\infty$ and hence $\Vert\mathbf{1}_{B_{m,k}}\Vert_{\infty,p,w^{1/p}}=1$ for sufficiently large $k$. Hence, we may select $k_m\in\mathbb{N}$ so that $B_{m,k_m}>A_m$ and $\Vert\mathbf{1}_{B_{m,k_m}}\Vert_{\infty,p,w^{1/p}}=1$. On the other hand, $w\notin\ell_1$ guarantees that $w(A_m)\to\infty$.\end{proof}
	
	\begin{rem}The above proof works for $X_{\infty,\textbf{s},w}$ as well, except that the basis is no longer unconditional. It yields an example of a subspace of $c_0$ with a basis which is $w$-democratic but not conservative, so long as $w\in(0,\infty)^\mathbb{N}\cap(c_0\setminus\ell_1)$.\end{rem}

	However, the situation is different for the canonical basis of $X_{q,p,w^{1/p}}$ when $q\neq\infty$. These spaces fail to contain any copies of $c_0$, and hence do not admit $v$-greedy bases for $v$ non-seminormalized. Even when $v$ is seminormalized the canonical basis may not be $v$-democratic (nor $v$-greedy), as we will see momentarily.
	
	\begin{prop}\label{p:w-conservative}
		Fix $1\leq p<q<\infty$, and let $w\in(0,\infty)^\mathbb{N}$ be decreasing. 
		Then the canonical basis of $X_{q,p,w^{1/p}}$ is unconditional and $w$-conservative with constants 1.\end{prop}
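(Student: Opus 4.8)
The plan is to estimate the norm in $X_{q,p,w^{1/p}}$ directly from the definition. For a finitely supported $\mathbf{a} = (a_n)$ viewed inside $\ell_q \oplus_\infty \ell_p$ via the embedding $n \mapsto f_n \oplus w_n^{1/p} e_n$, we have
\[
\Vert \mathbf{a}\Vert_{X_{q,p,w^{1/p}}} = \max\Big\{ \big(\textstyle\sum_n |a_n|^q\big)^{1/q}, \big(\sum_n |a_n|^p w_n\big)^{1/p} \Big\}.
\]
Unconditionality with constant $1$ is immediate, since both terms depend only on $(|a_n|)_n$; so the whole content is the $w$-conservativeness.

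First I would record the two explicit formulas for the indicator norms: for a finite set $E$,
\[
\Vert \mathbf{1}_E\Vert_{X_{q,p,w^{1/p}}} = \max\big\{ |E|^{1/q},\, w(E)^{1/p}\big\}.
\]
Now fix finite sets $A < B$ with $w(A) \le w(B)$; the goal is $\Vert \mathbf{1}_A\Vert \le \Vert \mathbf{1}_B\Vert$. The $\ell^p$-parts already satisfy $w(A) \le w(B)$, so it suffices to control the $\ell^q$-part, i.e.\ to show $|A|^{1/q} \le \Vert \mathbf{1}_B\Vert$, equivalently $|A| \le \max\{|B|, w(B)^{q/p}\}$. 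Here is where I would use that $w$ is decreasing together with $A < B$: every index of $A$ is strictly smaller than every index of $B$, so $\min_{n \in A} w_n \ge \max_{n\in B} w_n$ is false in general — rather $w_n \ge w_m$ whenever $n \le m$, hence for $n \in A$, $m \in B$ we get $w_n \ge w_m$. Consequently
\[
|A| \cdot w_{\max A} \le w(A) \le w(B) \le |B| \cdot w_{\min B} \le |B| \cdot w_{\max A},
\]
using $\max A < \min B$ and monotonicity in the outer inequalities. Cancelling $w_{\max A} > 0$ gives $|A| \le |B|$, and therefore $|A|^{1/q} \le |B|^{1/q} \le \Vert \mathbf{1}_B\Vert$.

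Combining the two parts, $\Vert \mathbf{1}_A\Vert = \max\{|A|^{1/q}, w(A)^{1/p}\} \le \max\{|B|^{1/q}, w(B)^{1/p}\} = \Vert \mathbf{1}_B\Vert$, which is $w$-conservativeness with constant $1$. I do not anticipate a genuine obstacle: the only subtlety is making sure the monotonicity of $w$ is applied in the correct direction (decreasing weight plus $A$ lying entirely to the left of $B$ forces $|A|\le|B|$, exactly as in the proposition just proved in the excerpt showing conservative $\Rightarrow$ $w$-conservative for nonincreasing $w$), after which both the $\ell_q$ and the $\ell_p$ coordinates are handled by the same cardinality/measure comparison.
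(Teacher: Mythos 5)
Your proof is correct and follows essentially the same route as the paper: compute $\Vert \mathbf{1}_E\Vert_{q,p,w^{1/p}} = |E|^{1/q}\vee w(E)^{1/p}$, deduce $|A|\le|B|$ from $A<B$ and the monotonicity of $w$ via the chain $|A|\,w_{\max A}\le w(A)\le w(B)\le |B|\,w_{\min B}\le |B|\,w_{\max A}$, and compare the two maxima termwise. The only cosmetic issue is the muddled aside claiming $\min_{n\in A}w_n\ge\max_{n\in B}w_n$ is ``false in general'' (it is in fact true here and is exactly what your displayed chain uses), but this does not affect the argument.
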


	\begin{proof}
 Select $A,B\in\mathbb{N}^{<\infty}$ with $w(A)\leq w(B)$ and $A<B$. Since $w$ is decreasing, we must have
		$|A| \leq |B|$. Thus,
		$$\Vert\mathbf{1}_A\Vert_{q,p,w^{1/p}}=(\vert A\vert)^{1/q}\vee w(A)^{1/p}\leq(\vert B\vert)^{1/q}\vee w(B)^{1/p}=\Vert\mathbf{1}_{B}\Vert_{q,p,w^{1/p}}.$$\end{proof}

	\begin{prop}\label{not-conservative}Fix $1\leq p<q<\infty$ and $0<\theta<1-\frac{p}{q}$. 
		Let $w=(w_n)_{n=1}^\infty\in(0,\infty)^\mathbb{N}$ be defined by $w_n=n^{-\theta}$ for $n\in\mathbb{N}$.
		Then the canonical basis for $X_{q,p,w^{1/p}}$ is not conservative, and not $w$-democratic.\end{prop}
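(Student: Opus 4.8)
The plan is to argue directly from the explicit norm of indicator vectors. As in the proof of Proposition~\ref{p:w-conservative}, for every finite $A\subset\N$ the vector $\mathbf{1}_A$ corresponds to $\bigl(\sum_{n\in A}f_n\bigr)\oplus\bigl(\sum_{n\in A}w_n^{1/p}e_n\bigr)$ in $\ell_q\oplus_\infty\ell_p$, so that
$$\Vert\mathbf{1}_A\Vert_{q,p,w^{1/p}}=\vert A\vert^{1/q}\vee w(A)^{1/p}.$$
Everything then comes down to choosing sets for which one of the two terms $\vert A\vert^{1/q}$, $w(A)^{1/p}$ dominates while for a companion set the opposite term dominates; the hypothesis $0<\theta<1-p/q$ enters only through the equivalent inequality $\tfrac{1-\theta}{p}>\tfrac1q$.

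For the failure of conservativeness I would compare an initial segment with a far-out block of the same length. Put $A_m=\{1,\dots,m\}$; an integral estimate gives $w(A_m)=\sum_{n=1}^m n^{-\theta}\geq c\,m^{1-\theta}$ for a constant $c=c(\theta)>0$, so that by $\tfrac{1-\theta}{p}>\tfrac1q$ the second term dominates for all large $m$ and $\Vert\mathbf{1}_{A_m}\Vert\geq c^{1/p}m^{(1-\theta)/p}$. For such $m$ choose $k_m\geq m$ so large that $w(\{k_m+1,\dots,k_m+m\})\le m\,k_m^{-\theta}\le m^{p/q}$ (possible since $m\,k^{-\theta}\to0$ as $k\to\infty$ for fixed $m$), and set $B_m=\{k_m+1,\dots,k_m+m\}$; then the first term dominates and $\Vert\mathbf{1}_{B_m}\Vert=m^{1/q}$. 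Since $A_m<B_m$, $\vert A_m\vert=\vert B_m\vert=m$, and
$$\frac{\Vert\mathbf{1}_{A_m}\Vert}{\Vert\mathbf{1}_{B_m}\Vert}\geq c^{1/p}\,m^{(1-\theta)/p-1/q}\longrightarrow\infty\quad(m\to\infty),$$
no finite constant can witness conservativeness; the same pair, with $\vert A_m\vert=\vert B_m\vert$, also shows the basis is not democratic.

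For the failure of $w$-democracy the competing sets must have comparable $w$-measure, so the previous pair no longer works (there $w(A_m)\gg w(B_m)$, the wrong direction). Instead I would take $B=\{1\}$, so $w(B)=w_1=1$ and $\Vert\mathbf{1}_B\Vert=1$, and $A_m=\{k_m+1,\dots,k_m+m\}$ with $k_m=\lceil m^{1/\theta}\rceil$, which forces $w(A_m)<m\,k_m^{-\theta}\le1=w(B)$ while $\Vert\mathbf{1}_{A_m}\Vert\ge\vert A_m\vert^{1/q}=m^{1/q}$. Then $\Vert\mathbf{1}_{A_m}\Vert/\Vert\mathbf{1}_B\Vert\ge m^{1/q}\to\infty$, so $w$-democracy fails.

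There is no genuine obstacle here beyond bookkeeping: one must keep track of the two regimes — initial segments, where $w(A)\asymp\vert A\vert^{1-\theta}$ is large relative to $\vert A\vert$, versus far-out blocks, where $w(A)$ is small — and in each case determine which of $\vert A\vert^{1/q}$, $w(A)^{1/p}$ realizes the maximum. The strict inequality $\theta<1-p/q$ is used exactly once, to ensure the exponent $\tfrac{1-\theta}{p}-\tfrac1q$ appearing in the conservativeness estimate is positive. Note that this is consistent with Proposition~\ref{p:w-conservative}, since $w_n=n^{-\theta}$ is decreasing: the basis is $w$-conservative yet not $1$-conservative precisely because $w$ is not equivalent to a constant.
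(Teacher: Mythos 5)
Your argument for the failure of conservativeness is essentially the paper's own: compare the initial segment $A_m=\{1,\dots,m\}$, whose norm is driven by the $\ell_p$-component $w(A_m)^{1/p}\gtrsim m^{(1-\theta)/p}$, with a far-out block $B_m$ of the same cardinality whose norm collapses to $|B_m|^{1/q}=m^{1/q}$; the hypothesis $\theta<1-p/q$ enters exactly as you say, to make the exponent $(1-\theta)/p-1/q$ positive. For the failure of $w$-democracy, however, you take a genuinely different and simpler route. The paper keeps both test sets ``generic'': it fixes $A_n=\{1,\dots,n\}$, takes $B_{m,k}=\{k+1,\dots,k+m\}$ with $m\asymp n^{1-\theta}k^{\theta}$ calibrated so that $w(B_{m,k})\le w(A_n)$, and then checks that the ratio $\|\mathbf{1}_{B_{m,k}}\|/\|\mathbf{1}_{A_n}\|\approx k^{\theta/q}n^{-(1-\theta)(1-1/q)}$ is unbounded in $k$. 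You instead anchor the small set at $B=\{1\}$ (so $w(B)=\|\mathbf{1}_B\|=1$) and push $A_m=\{k_m+1,\dots,k_m+m\}$ out to $k_m\ge m^{1/\theta}$ so that $w(A_m)<1$ while $\|\mathbf{1}_{A_m}\|\ge m^{1/q}\to\infty$. Both are correct; yours is shorter and makes transparent that non-$w$-democracy here uses only $\inf_n w_n=0$ and $q<\infty$ (i.e., the absence of $c_0$-behaviour, consistent with Proposition \ref{p:find c0}(iii)), with the constraint $\theta<1-p/q$ needed only for the conservativeness half. The paper's two-parameter family is more elaborate but exhibits the failure along a whole range of comparable-weight pairs rather than against a single singleton. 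One cosmetic remark: your closing sentence about Proposition \ref{p:w-conservative} is a consistency check, not part of the proof, and could be omitted.
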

	
	\begin{proof}
		First establish that our basis is not conservative. As in the proof of Proposition \ref{RW-infty},
		for $k,m\in\mathbb{N}$ we set $A_m=\{1,\ldots,m\}$ and $B_{m,k}=\{k+1,\ldots,k+m\}$, and for each $m\in\mathbb{N}$ we find $k_m\in\mathbb{N}$ large enough that
		$A_m<B_{m,k_m}$ and $\Vert\mathbf{1}_{B_{m,k_m}}\Vert_{q,p,w^{1/p}}=m^{1/q}$. Meanwhile,
		$$\Vert\mathbf{1}_{A_m}\Vert_{q,p,w^{1/p}}
		\geq w(A_m)^{1/p}
		=\left(\sum_{n=1}^m n^{-\theta}\right)^{1/p}
		\geq\left(\int_1^m t^{-\theta}\;dt\right)^{1/p}
		=\left(\frac{m^{1-\theta}-1}{1-\theta}\right)^{1/p}$$
		so that, due to $1-\theta-p/q>0$,
		$$\frac{\Vert\mathbf{1}_{A_m}\Vert_{q,p,w^{1/p}}}{\Vert\mathbf{1}_{B_{m,k_m}}\Vert_{q,p,w^{1/p}}}
		\geq\left(\frac{m^{1-\theta}-1}{1-\theta}\right)^{1/p}m^{-1/q}
		=\left(\frac{m^{1-\theta-p/q}-m^{-p/q}}{1-\theta}\right)^{1/p}\to\infty.$$
		
		We next sketch the proof of the lack of $w$-democracy. To this end, consider the sets $A_n$ and
		$B_{m,k}$ as defined in the preceding paragraph. There exist universal constants $c$ and $C$ so that
		$w(A_n) \geq c n^{1-\theta}$, and $\Vert\mathbf{1}_{A_n}\Vert \leq C n^{1-\theta}$.
		Then $w(B_{m,k}) < m k^{-\theta}$, while $\Vert\mathbf{1}_{B_{m,k}}\Vert \geq m^{1/q}$.
		For large values of $k$, select $m \in [cn^{1-\theta}k^\theta/2, cn^{1-\theta}k^\theta]$.
		Then $w(B_{m,k}) \leq w(A_n)$, yet the ratio
		$$
		\frac{\Vert\mathbf{1}_{B_{m,k}}\Vert}{\Vert\mathbf{1}_{A_n}\Vert} \approx
		\frac{m^{1/q}}{n^{1-\theta}} \approx \frac{n^{(1-\theta)/q} k^{\theta/q}}{n^{1-\theta}} =
		k^{\theta/q} n^{-(1-\theta)(1-1/q)}
		$$
		can be arbitrarily large (for large $k$), ruling out the possibility of $w$-democracy.
	\end{proof}
	
	
	\begin{cor}Fix $1\leq p<q<\infty$ and $0<\theta<1-\frac{p}{q}$. Let $w=(w_n)_{n=1}^\infty\in(0,\infty)^\mathbb{N}$ be defined by $w_n=n^{-\theta}$ for $n\in\mathbb{N}$. Then the canonical basis for $X_{q,p,w^{1/p}}$ is not $v$-democratic for any weight $v\in(0,\infty)^\mathbb{N}$.\end{cor}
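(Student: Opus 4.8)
The plan is to argue by contradiction. Suppose $\mathcal{B}$, the canonical basis of $X_{q,p,w^{1/p}}$, were $C$-$v$-democratic for some weight $v\in(0,\infty)^{\mathbb{N}}$. Since the norm of $X_{q,p,w^{1/p}}$ depends only on the absolute values of the coordinates, $\mathcal{B}$ is $1$-unconditional, so $\Vert\mathbf{1}_{\varepsilon A}\Vert=\Vert\mathbf{1}_A\Vert$ for every finite set $A$ and every sign sequence $\varepsilon$; hence $\mathcal{B}$ is in fact $C$-$v$-superdemocratic. I would then split into two cases, according to whether $v$ is seminormalized.

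\emph{Case 1: $v$ is not seminormalized}, i.e.\ $\sup_n v_n=\infty$ or $\inf_n v_n=0$. If $\sup_n v_n=\infty$, Proposition \ref{p:find c0}(ii) shows $\mathcal{B}$ is equivalent to the $c_0$-basis. If $\inf_n v_n=0$ (which subsumes the case $\sum_n v_n<\infty$), Proposition \ref{p:find c0}(iii) produces a subsequence of $\mathcal{B}$ equivalent to the $c_0$-basis. In either event $X_{q,p,w^{1/p}}$ would contain an isomorphic copy of $c_0$. But for $q<\infty$ the space $X_{q,p,w^{1/p}}$ is a closed subspace of $\ell_q\oplus_\infty\ell_p$, which is weakly sequentially complete (being a finite direct sum of the weakly sequentially complete spaces $\ell_q$ and $\ell_p$), and a weakly sequentially complete space — together with all of its subspaces — contains no copy of $c_0$. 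This contradiction rules out Case 1, and in particular recovers the already-noted fact that these spaces omit $c_0$.

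\emph{Case 2: $v$ is seminormalized}, so $v$ is equivalent (in the sense of Definition \ref{d:equivalent}) to a constant weight. By Remark \ref{cons-equiv}, the $v$-democracy of $\mathcal{B}$ then upgrades to ordinary democracy. But a democratic basis is in particular conservative — democracy is precisely conservativeness without the ordering constraint $A<B$ — whereas Proposition \ref{not-conservative} asserts that $\mathcal{B}$ is \emph{not} conservative, a contradiction.

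Since both cases are impossible, no weight $v$ can make $\mathcal{B}$ $v$-democratic. The substantive step is Case 1: this is where one uses that the weight need not be seminormalized and that, because $X_{q,p,w^{1/p}}$ ($q<\infty$) is ``small'' (it omits $c_0$), it cannot carry a $v$-superdemocratic basis for a degenerate weight; Case 2 then merely reduces to the failure of conservativeness established in Proposition \ref{not-conservative}. The only care needed is the routine verification that weak sequential completeness of $\ell_q\oplus_\infty\ell_p$ passes to its subspaces and obstructs $c_0$.
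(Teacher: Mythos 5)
Your proof is correct and follows essentially the same route as the paper: apply Proposition \ref{p:find c0} to force the weight $v$ to be equivalent to a constant (since the space contains no copy of $c_0$), then invoke Remark \ref{cons-equiv} to upgrade $v$-democracy to democracy, hence conservativeness, contradicting Proposition \ref{not-conservative}. The only difference is that you spell out two details the paper leaves implicit — that unconditionality lets you pass from $v$-democracy to the $v$-superdemocracy needed for Proposition \ref{p:find c0}, and that the absence of $c_0$ follows from weak sequential completeness of $\ell_q\oplus_\infty\ell_p$ — both of which are correct.
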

	
\begin{proof}
Suppose, for the sake of contradiction, that the canonical basis for $X_{q,p,w^{1/p}}$ is $v$-democratic.
Note that this basis contains no subsequences equivalent to the $c_0$-basis.
Proposition \ref{p:find c0} shows that $0 < \inf v_n \leq \sup v_n < \infty$ -- that is,
the weight $v$ is equivalent to a constant. Then, by Remark \ref{cons-equiv},
the canonical basis for $X_{q,p,w^{1/p}}$ has to be democratic, hence conservative.
This, however, contradicts Proposition \ref{not-conservative}.
\end{proof}

\begin{rem}Consider again the weight $w$ from Proposition \ref{not-conservative}.
		\begin{itemize}\item
			It follows from Propositions \ref{p:w-conservative} and \ref{not-conservative} that
			the canonical basis of $X_{q,p,w^{1/p}}$ is $w$-partially-greedy basis, but not $w$-almost-greedy.
			However, the space $X_{q,p,w^{1/p}}$ does have an almost-greedy basis. Indeed, we recall from \cite[Theorem 10.7.1]{AK16} that if $\mathbb X$ has a complemented subspace with a symmetric basis and finite cotype then $\mathbb X$ admits an almost-greedy basis. If $w\in(0,\infty)^\mathbb{N}\cap(c_0\setminus\ell_{(pq)/(q-p)})$ then the Woo-Rosenthal spaces $X_{q,p,w}$ contain complemented copies of $\ell_p$ and $\ell_q$ (or $c_0$ if $q=\infty$; see \cite[Corollary 3.2]{Woo75}), and hence satisfy this condition.

\item Just as in Proposition \ref{not-conservative}, one can show that
the canonical basis of $X_{q,\textbf{s},w}$ is not conservative when $0<\theta<1-\frac{1}{q}$.
However, it is not quasi-greedy, either. For $\theta=1-\frac{1}{q}$, this basis
becomes quasi-greedy and democratic (for $q=2$, this was observed in \cite[Example 10.2.9]{AK16},
the argument is valid for all $1<q<\infty$).
\end{itemize}\end{rem}

	\section{Questions}\label{s:questions}
	\begin{itemize}
		\item Does Property (D) imply Property (C)?
		\item Does Property (A) imply Property (D)?
		\item If a basis has Properties (C) and (A), is it necessarily semi-greedy?
		\item Is it possible to formulate a new property so that every conservative basis
		with this property is necessarily democratic? 
		\item If $w=(1,1,...)$ in Theorem \ref*{t:partially_greedy}, do we get the same constant as in the classic case (\cite[Theorem 3.4]{DKKT})?
	\end{itemize}
	
	\section{Appendix}\label{appendix}
	The purpose of this appendix is to show two basic lemmas.
 The first one resembles a result from \cite{BBG}. 
	For each $\lambda>0$, we define the $\lambda$-truncation of $z\in\mathbb C$ by
	\begin{equation*}
	T_\lambda(z) = \left\lbrace
	\begin{array}{ll}
	\lambda \sgn (z) & \textup{if } \vert z\vert \geq \lambda\\
	z & \textup{if } \vert z\vert \leq \lambda
	\end{array}
	\right.
	\end{equation*}
	
	We extend $T_\lambda$ to an operator on $\mathbb X$ by 
	$$T_\lambda(x)=\sum_{j}T_\lambda(e_j^*(x))e_j = \sum_{j\in \Lambda_\lambda} \lambda \sgn(e_j^*(x))e_n + \sum_{j\in \Lambda_\lambda^c}e_j^*(x)e_j,$$
	where $\Lambda_\lambda = \lbrace j : \lambda < \vert e_j^*(x)\vert\rbrace$.
	
	\begin{lem}\label{trun}
		For all $\lambda>0$ and $x\in\mathbb{X}$, if $\mathcal B$ is $C_q$-quasi-greedy, we have
		$$\Vert T_\lambda (x)\Vert \leq C_q\Vert x\Vert,\;\; \Vert (I-T_\lambda)(x)\Vert \leq (C_q+1)\Vert x\Vert,\;\; \alpha\Vert \mathbf{1}_{\varepsilon \Lambda}\Vert\leq 2C_q\Vert x\Vert,$$
		where $\alpha = \min_{j\in\Lambda}\vert e_j^*(x)\vert $, $\Lambda$ is a greedy set of $x$ and $\varepsilon \equiv \sgn(e_j^*(x))$.
		
		Moreover, if $\mathcal B$ is $K_u$-unconditional, for every set $A$ with $\vert A\vert<\infty$, $$\Vert T_\lambda (I-P_A)(x)\Vert \leq K_u\Vert x\Vert.$$
	\end{lem}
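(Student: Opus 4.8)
The plan is to treat the four inequalities in turn, reducing everything to the quasi-greedy hypothesis \eqref{quasi2} (and the unconditionality hypothesis \eqref{uncon} for the last one) via carefully chosen auxiliary vectors. For the first bound, I would observe that $T_\lambda(x)$ is itself (up to nothing) the result of applying the truncation, and I would exhibit $T_\lambda(x)$ as a greedy projection of a suitable vector: consider the element $z = \sum_{j \in \Lambda_\lambda} (e_j^*(x) + \mu\,\sgn(e_j^*(x))) e_j + \sum_{j \in \Lambda_\lambda^c} e_j^*(x) e_j$ obtained by pushing up the large coordinates by a parameter $\mu$ large enough that $\Lambda_\lambda$ becomes a greedy set of $z$ of the right shape; comparing $z$, $x$, and $T_\lambda(x)$ while sending $\mu$ appropriately, and using the quasi-greedy estimate, should give $\|T_\lambda(x)\| \le C_q\|x\|$. (This is the step I expect to require the most care in the bookkeeping, since one must arrange the perturbation so that $T_\lambda(x)$ is literally a greedy approximand and not merely close to one; a cleaner route may be to quote the cited result from \cite{BBG} directly, as the lemma statement advertises.)

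Granting the first inequality, the second is immediate from the triangle inequality: $\|(I - T_\lambda)(x)\| \le \|x\| + \|T_\lambda(x)\| \le (C_q+1)\|x\|$. For the third inequality, let $\Lambda$ be a greedy set of $x$ with threshold value $\alpha = \min_{j \in \Lambda}|e_j^*(x)|$. Applying the first inequality with $\lambda = \alpha$ gives $\|T_\alpha(x)\| \le C_q\|x\|$; now $T_\alpha(x) = \alpha \mathbf{1}_{\varepsilon\Lambda'} + P_{\Lambda'^c}(x)$ where $\Lambda' = \Lambda_\alpha \subseteq \Lambda$ consists of the coordinates strictly above $\alpha$, and a second application of a greedy-projection estimate (or of \eqref{quasi2} to $T_\alpha(x)$ itself, whose greedy set of the appropriate size is $\Lambda'$) lets one strip off $P_{\Lambda'^c}(x)$, leaving $\|\alpha\mathbf{1}_{\varepsilon\Lambda'}\| \le 2C_q\|x\|$; finally the difference between $\Lambda$ and $\Lambda'$ consists only of coordinates where $|e_j^*(x)| = \alpha$ exactly, which one can absorb by an arbitrarily small perturbation, yielding $\alpha\|\mathbf{1}_{\varepsilon\Lambda}\| \le 2C_q\|x\|$.

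For the last inequality, assume $\mathcal B$ is $K_u$-unconditional. Then $\|(I - P_A)(x)\| \le K_u\|x\|$ by \eqref{uncon}, so it suffices to show $\|T_\lambda(y)\| \le K_u\|y\|$ for $y = (I-P_A)(x)$ — but in fact for an unconditional basis the truncation operator $T_\lambda$ is itself bounded by $K_u$ on all of $\mathbb{X}$, since $T_\lambda(y) = \sum_j T_\lambda(e_j^*(y)) e_j$ has coordinates obeying $|T_\lambda(e_j^*(y))| \le |e_j^*(y)|$ with $T_\lambda(e_j^*(y)) = \theta_j e_j^*(y)$ for scalars $\theta_j \in [0,1]$ (on the large coordinates $\theta_j = \lambda/|e_j^*(y)|$, on the small ones $\theta_j = 1$), and an operator that multiplies each coordinate by a scalar in $[0,1]$ is a norm-$\le K_u$ operator by the standard averaging/convexity argument for unconditional bases. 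Combining, $\|T_\lambda(I-P_A)(x)\| \le K_u\|(I-P_A)(x)\| \le K_u \cdot K_u\|x\|$; to get the sharper $K_u$ (not $K_u^2$) stated, one instead notes that $T_\lambda \circ (I - P_A)$ is again a single coordinatewise multiplier with multipliers in $[0,1]$, hence has norm $\le K_u$ in one stroke. The main obstacle, as noted, is the first inequality; the rest is routine.
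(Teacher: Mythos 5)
There is a genuine gap, and it sits exactly where you suspected: the first inequality $\Vert T_\lambda(x)\Vert \le C_q \Vert x\Vert$. The perturbation you propose does not work as described. If $z$ is obtained from $x$ by pushing the coordinates on $\Lambda_\lambda$ up by $\mu$, then the greedy remainder $z-\mathcal G_{\vert\Lambda_\lambda\vert}(z)$ is $P_{\Lambda_\lambda^c}(x)$, not $T_\lambda(x)$: the truncation \emph{flattens} the large coordinates to $\lambda\,\sgn(e_j^*(x))$ rather than deleting or keeping them, so $T_\lambda(x)$ is never literally a greedy approximand (or remainder) of a controlled perturbation of $x$, and sending $\mu\to 0$ just reduces you to bounding $\lambda\Vert\mathbf 1_{\varepsilon\Lambda_\lambda}\Vert$ --- i.e.\ to the third inequality, which is circular. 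The missing idea is the averaging identity you in fact already use for the \emph{last} inequality: writing $\min(1,\lambda/\vert e_j^*(x)\vert)=\int_0^1 \chi_{[0,\lambda/\vert e_j^*(x)\vert]}(s)\,ds$ gives $T_\lambda(x)=\int_0^1 (I-P_{\Lambda_{\lambda,s}})x\,ds$ with $\Lambda_{\lambda,s}=\{j:\lambda/s<\vert e_j^*(x)\vert\}$, and each $\Lambda_{\lambda,s}$ is a greedy set of $x$, so Minkowski's integral inequality and \eqref{quasi2} give the bound $C_q\Vert x\Vert$ at once. This is precisely the paper's proof; your observation that ``a coordinatewise multiplier with multipliers in $[0,1]$ has norm at most $K_u$'' is the unconditional incarnation of the same convexity trick, and the quasi-greedy case only requires noticing that the relevant multipliers are constant on level sets of $\vert e_j^*(x)\vert$, so the sets $\Lambda_{\lambda,s}$ appearing in the average are greedy sets.

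Two smaller points. For the third inequality, your route (apply \eqref{quasi2} to $T_\alpha(x)$ and extract $\alpha\mathbf 1_{\varepsilon\Lambda'}$ as a greedy approximand) yields the constant $(1+C_q)C_q$ rather than the stated $2C_q$, and needs a perturbation to pass from $\Lambda'$ to $\Lambda$; the paper instead uses the exact identity $\alpha\mathbf 1_{\varepsilon\Lambda}=T_\alpha(x)-P_{\Lambda^c}(x)=\int_0^1\bigl(P_\Lambda(x)-P_{\Lambda_{\alpha,s}}(x)\bigr)\,ds$, where each integrand is a difference of two greedy remainders of $x$, hence of norm at most $2C_q\Vert x\Vert$ --- no tie-breaking perturbation needed. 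Your arguments for the second inequality (triangle inequality) and the fourth (coordinatewise multipliers with coefficients in $[0,1]$, which compose $T_\lambda$ with $I-P_A$ in one stroke and give $K_u$ rather than $K_u^2$) are correct and match the paper in substance.
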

	
	\begin{proof}
		$$T_\lambda(x)=\int_{0}^{1}\left[\sum_j \chi_{[0,\frac{\lambda}{\vert e_j^*(x)\vert}]}(s)e_j^*(x)e_j \right]ds = \int_{0}^{1} (I-P_{\Lambda_{\lambda,s}})x\,ds,$$
		where $\Lambda_{\lambda,s}=\lbrace j : \frac{\lambda}{s}<\vert e_j^*(x)\vert\rbrace$ is a greedy set of $x$ of finite cardinality. Then, using the Minkowski's integral inequality,
		$$\Vert T_\lambda (x)\Vert \leq \int_{0}^{1}\Vert (I-P_{\Lambda_{\lambda,s}})x\Vert ds\leq C_q\Vert x\Vert.$$
		
		Also, since $(I-T_\lambda)x= \int_{0}^{1}P_{\Lambda_{\lambda,s}}(x)ds$, hence
		$$\Vert (I-T_\lambda)(x)\Vert \leq (C_q+1)\Vert x\Vert.$$
		
		Now, since $\alpha \mathbf{1}_{\varepsilon \Lambda} = T_{\alpha}(x)-P_{\Lambda^c}(x) = \int_{0}^{1}(P_{\Lambda}(x)-P_{\Lambda_{\alpha,s}}(x))ds$,
		$$\alpha\Vert \mathbf{1}_{\varepsilon \Lambda}\Vert \leq 2C_q\Vert x\Vert.$$
		On the other hand, if $A$ is a general set with $\vert A\vert<\infty$,
		$$T_\lambda (I-P_A)x = \int_{0}^{1}(I-P_{\Lambda_{\lambda,s}})(I-P_A)x\,ds = \int_{0}^{1}(I-P_{A\cup\Lambda_{\lambda,s}})x \,ds,$$
		thus $$\Vert T_\lambda (I-P_A)(x)\Vert \leq K_u\Vert x\Vert.$$
	\end{proof}
	
	The second lemma involves the concept of $w$-partially-greedy bases.
	\begin{lem}\label{part1}
		If $\mathcal B$ is $C_w$-$w$-conservative and $C_q$-quasi-greedy, then
		$$\Vert \sum_{j\in A}a_j e_j\Vert \leq 4C_qC_w\max_{j\in A}\vert a_j\vert\Vert \mathbf{1}_{\eta B}\Vert,$$
		for any sign $\eta$ and $A,B\in\mathbb N^{<\infty}$ such that $w(A)\leq w(B)$, $A<B$ and any collection of scalars $(a_j)_{j\in A}$.
	\end{lem}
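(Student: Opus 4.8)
The plan is to reduce to the case of unimodular coefficients by an integration argument, and then to estimate the norm of a signed indicator by combining $w$-conservativeness — invoked for \emph{all} subsets of $A$, via a convex-hull representation — with the standard fact that quasi-greedy bases are unconditional for constant coefficients.

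First I would normalise: by homogeneity assume $\max_{j\in A}|a_j|=1$, and discard the indices with $a_j=0$, so that $\varepsilon_j:=\sgn(a_j)$ is defined on $A$ and $x:=\sum_{j\in A}a_je_j=\sum_{j\in A}|a_j|\varepsilon_je_j$. For $t\in(0,1)$ set $A_t=\{j\in A:|a_j|>t\}$; since $|a_j|\le 1$ we have $\int_0^1\mathbf 1_{[0,|a_j|)}(t)\,dt=|a_j|$, and therefore $x=\int_0^1\mathbf 1_{\varepsilon A_t}\,dt$. This is really a finite sum, as $t\mapsto\mathbf 1_{\varepsilon A_t}$ is a step function with values in $\mathrm{span}\,\{e_j:j\in A\}$, so by the triangle inequality $\|x\|\le\int_0^1\|\mathbf 1_{\varepsilon A_t}\|\,dt$. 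Hence it is enough to show $\|\mathbf 1_{\varepsilon D}\|\le 4C_qC_w\|\mathbf 1_{\eta B}\|$ for each $D:=A_t$, noting that $D\subseteq A<B$ and $w(D)\le w(A)\le w(B)$.

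The crucial step is the bound $\|\mathbf 1_{\varepsilon D}\|\le 2C_w\|\mathbf 1_B\|$ for $D\subseteq A$. Here I would use that $\mathbf 1_{\varepsilon D}$ lies in $2S_D$, where $S_D$ is the absolutely convex hull of $\{\mathbf 1_{D'}:D'\subseteq D\}$: splitting $D=D^+\cup D^-$ according to the sign of $\varepsilon_j$ gives $\mathbf 1_{\varepsilon D}=2\bigl(\tfrac12\mathbf 1_{D^+}-\tfrac12\mathbf 1_{D^-}\bigr)$ (cf.\ \cite[Lemma 6.4]{DKO}), whence $\|\mathbf 1_{\varepsilon D}\|\le 2\max_{D'\subseteq D}\|\mathbf 1_{D'}\|$. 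For every $D'\subseteq D\subseteq A$ one has $D'<B$ and $w(D')\le w(B)$, so $w$-conservativeness yields $\|\mathbf 1_{D'}\|\le C_w\|\mathbf 1_B\|$, and the bound follows. To finish, I would pass from $\mathbf 1_B$ to $\mathbf 1_{\eta B}$: writing $B=B^+\cup B^-$ with $B^\pm=\{j\in B:\eta_j=\pm1\}$ we get $\mathbf 1_B=\mathbf 1_{\eta B^+}-\mathbf 1_{\eta B^-}$, and since all coefficients of $\mathbf 1_{\eta B}$ have modulus $1$, both $B^+$ and $B^-$ are greedy sets of $\mathbf 1_{\eta B}$; thus $\mathbf 1_{\eta B^\pm}=\mathbf 1_{\eta B}-\mathcal G_{|B^\mp|}(\mathbf 1_{\eta B})$ has norm at most $C_q\|\mathbf 1_{\eta B}\|$ by \eqref{quasi2}, so $\|\mathbf 1_B\|\le 2C_q\|\mathbf 1_{\eta B}\|$. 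Combining the displays gives $\|\mathbf 1_{\varepsilon D}\|\le 4C_qC_w\|\mathbf 1_{\eta B}\|$, and integrating over $t$ and undoing the normalisation yields the assertion.

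The one point that has to be noticed is that $w$-conservativeness must be applied to all subsets of $A$ rather than to $A$ itself; this is legitimate precisely because a subset of $A$ still lies below $B$ and has no larger $w$-measure, and it is what keeps the cost of removing the sign pattern $\varepsilon$ down to a single factor $2$ rather than an extra power of the quasi-greedy constant. The remaining ingredients are routine: the integral reduction is free by Minkowski's/the triangle inequality, and the sign on $B$ is handled by the usual quasi-greedy projection estimate.
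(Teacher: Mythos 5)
Your proof is correct and follows essentially the same route as the paper's: reduce to signed indicators of subsets of $A$ by convexity, split each such indicator by sign and apply $w$-conservativeness to the two pieces (legitimate since subsets of $A$ still lie below $B$ with no larger $w$-measure), and pass from $\mathbf{1}_B$ to $\mathbf{1}_{\eta B}$ at the cost of $2C_q$ via the quasi-greedy projection estimate. Your write-up merely makes explicit the convexity step and the need to invoke conservativeness on all subsets of $A$, both of which the paper leaves implicit.
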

	\begin{proof}
		We prove that $\Vert \mathbf{1}_{\varepsilon A}\Vert \leq 4C_qC_w\Vert \mathbf{1}_{\eta B}\Vert$ for any signs $\varepsilon$ and $\eta$. First, we can decompose $\textbf{1}_{\varepsilon A}=\textbf{1}_{A^+}-\textbf{1}_{A^-}$, where $A^{\pm}=\lbrace j\in A : \varepsilon_j = \pm 1\rbrace$. Then,
		$$\Vert \textbf{1}_{\varepsilon A}\Vert \leq \Vert \textbf{1}_{A^+}\Vert + \Vert \textbf{1}_{A^-}\Vert \leq 2C_w\Vert \textbf{1}_{B}\Vert.$$
		
		Now, using the condition to be quasi-greedy, it is clear that $\Vert \textbf{1}_B\Vert \leq 2C_q\Vert \textbf{1}_{\eta B}\Vert$, then
		$$\Vert \textbf{1}_{\varepsilon A}\Vert \leq 4C_qC_w\Vert \textbf{1}_{\eta B}\Vert.$$
		
		Now, using convexity, we are done.
	\end{proof}
	
	\textbf{Acknowledgments:} The first author thanks the University of Murcia for partially supporting of his research stay in the University of Illinois at Urbana-Champaign in September 2017, where this paper began.
	 

\begin{thebibliography}{1}
		\bibitem{AA1}
		\textsc{F. Albiac, J. L. Ansorena}, \textit{Characterization of 1-quasi greedy bases},  J. Approx. Theory, \textbf{201}, 7-12 (2016).
		\bibitem{AA2}\textsc{F. Albiac, J. L. Ansorena}, \textit{Characterization of 1-almost greedy bases}, Rev. Mat. Complut., \textbf{30}(1), 13-24 (2017).	
		\bibitem{AK16}
		\textsc{F. Albiac and N. Kalton},
		{\it Topics in Banach Space Theory},
		second edition, ISBN 978-3-319-31555-3 (2016).	
		\bibitem{AW}\textsc{F. Albiac, P. Wojtaszczyk}, \textit{Characterization of 1-greedy bases}, J. Approx. Theory, \textbf{138} (2006), 65-86.
		\bibitem{BB} \textsc{P. M. Bern\'a, \'O. Blasco}, \textit{Characterization of greedy bases in Banach spaces}, J. Approx. Theory, \textbf{205} (2017), 28-39.
		\bibitem{BB2} \textsc{P. M. Bern\'a, \'O. Blasco}, \textit{The best m-term approximation with respect to polynomials with constant coefficients}, Anal. Math., \textbf{43} (2) (2017), 119-132.
		\bibitem{BBG}\textsc{P. M. Bern\'a, \'O. Blasco, G. Garrig\'os}, \textit{Lebesgue inequalities for the greedy algorithm in general bases}, Rev. Mat. Complut. 30 (2017), 369-392. 
		\bibitem{CDH}\textsc{A. Cohen, R. A. DeVore, R. Hochmuth}, \textit{Restricted nonlinear approximation}, Constr. Approx., 16 (2000), 85-113.
		
		%
		\bibitem{DKK} \textsc{S. J. Dilworth, N. J. Kalton, D. Kutzarova}, \textit {On the existence of almost greedy bases in Banach spaces}, Studia Math. {\bf 159} (2003), 67--101. 
		
			\bibitem{DKO} \textsc{S. J. Dilworth, D. Kutzarova, T. Oikhberg}, \textit {Lebesgue constants for the weak greedy algorithm}, Rev. Mat. Complut. {\bf 28}(2), 393--409 (2015). 
			
		\bibitem{DKOSS}\textsc{S. J. Dilworth, D. Kutzarova, E. Odell, T. Schlumprecht, A. Zs\'ak}, \textit{Renorming spaces with greedy bases}, J. Approx. Theory \textbf{188} (2014), 39-56.	
		\bibitem{DKKT}\textsc{S. J. Dilworth, N. J. Kalton, D. Kutzarova, V. N. Temlyakov}, \textit{The thresholding greedy algorithm, greedy bases, and duality}, Constr. Approx. \textbf{19} (2003), no.4, 575-597.
		\bibitem{DKTW}\textsc{S. J. Dilworth, D. Kutzarova, V. N. Temlyakov, B. Wallis}, \textit{Weight-Almost greedy bases}. (Preprint) \url{http://arxiv.org/abs/1803.02932v1}.
		
		\bibitem{GHO} \textsc{G. Garrig\'os, E. Hern\'andez, T. Oikhberg}, \textit{Lebesgue-type inequalities for quasi-greedy bases}, Constr. Approx. \textbf{38} (2013), 447-470.	
		
		\bibitem{GPT} \textsc{G. Kerkyacharian, D. Picard, V. N. Temlyakov}, \textit{Some inequalities for the
			tensor product of greedy bases and weight-greedy bases}, East J. Approx., \textbf{12} (2006), 103-118.
		
		\bibitem{KT}\textsc{S. V. Konyagin, V. N. Temlyakov}, \textit{A remark on greedy approximation in Banach spaces}, East J. Approx. \textbf{5} (1999), 365-379.
		\bibitem{Tem} \textsc{V. N. Temlyakov}, \textit{Greedy approximation}, Cambridge Monographs on Applied and Computational Mathematics, vol.20, Cambridge University Press, Cambridge, 2011.
		\bibitem{Woj} \textsc{P. Wojtaszczyk}, \textit{Greedy algorithm for general biorthogonal systems}, J. Approx.Theory \textbf{107} (2) (2000), 293-314.
		\bibitem{Woj1} \textsc{P. Wojtaszczyk}, \textit{Greedy type bases in Banach spaces}, Constructive theory of functions, 136-155, DARBA, Sofia, 2003.
		\bibitem{Woo75}
		\textsc{J. Y. T. Woo}, \textit{On a class of universal modular sequence spaces},
		{Israel Journal of Mathematics} \textbf{20} (1975), 193-215.
	\end{thebibliography}
\end{document}